\documentclass[11pt, reqno]{amsart}

\usepackage{amsmath,amssymb,latexsym}
\usepackage{color}
\usepackage{xcolor}
\usepackage{graphicx}
\usepackage{cite}
\usepackage[colorlinks=true]{hyperref}
\hypersetup{urlcolor=blue, citecolor=red, linkcolor=blue}

\newcommand{\R}{\mathbb{R}}

\DeclareMathOperator{\supp}{supp}

\newtheorem{theorem}{Theorem}[section]
\newtheorem{lemma}{Lemma}

\newtheorem{corollary}{Corollary}
\newtheorem*{main-theorem}{Main Theorem}
\newtheorem*{remark*}{Remark}
\newtheorem*{lemma*}{Lemma A.1}

\numberwithin{equation}{section}

\begin{document}

	\title[Generalized Fornberg-Whitham equation]{Refined wave breaking for the generalized Fornberg-Whitham equation}

	\author{Jean-Claude Saut, and Yuexun Wang} 
	\address{Université Paris-Saclay, CNRS, Laboratoire de Mathématiques d'Orsay, 91405 Orsay, France}
	\email{jean-claude.saut@universite-paris-saclay.fr}

	\address{School of Mathematics and Statistics, Lanzhou University, 730000 Lanzhou,
		People's Republic of China}
	\email{yuexunwang@lzu.edu.cn}

\subjclass[2010]{76B15, 76B03, 	35S30, 35A20}
\keywords{dispersive perturbations, wave breaking, regularity, asymptotic convergence}

	\begin{abstract}

This paper considers a class of non-local equations that are weakly dispersive perturbations of the inviscid Burgers equation, which includes the Fornberg-Whitham equation as a special case. We precise the known results on finite time blow-up (shock formation) by constructing a blowup solution which displays a `shock-like' singularity (called wave breaking) at one single point. Moreover, this solution converges asymptotically in the self-similar variables to a stable self-similar solution of the inviscid Burgers equation, and also possesses a H\"{o}lder $C^{1/3}$ regularity at the blowup point.

	\end{abstract}
	
\maketitle

	\section{Introduction}

This paper considers the following dispersive perturbations of the inviscid Burgers equation:
\begin{equation}\label{dpb}
u_t+uu_x-Lu_x=0,
\end{equation}
where $(x, t)\in \mathbb{R}\times \mathbb{R}$ and the Fourier multiplier operator $L$ is defined by 
\begin{equation*}
\begin{aligned}
\widehat{Lf}(\xi)=p(\xi)\hat{f}(\xi)
\end{aligned}
\end{equation*}
for any $f\in \mathcal{S}(\mathbb{R})$. 

\eqref{dpb} contains some typical examples such as the fractional KdV (fKdV) equation, the Whitham equation and so on. More precisely, the fKdV equation is usually written as \eqref{dpb} with $p(\xi)=|\xi|^{\alpha}$ for $\alpha\in [-1,1)\setminus\{0\}$ (the case $\alpha=0$ recovers the inviscid Burgers equation after a suitable change of
variable) \footnote{When \(\alpha=-1\), \eqref{dpb} is referred as the Burgers-Hilbert (BH) equation introduced in \cite{MR2599457} to describe
the dynamics of vortex patches.}. The dynamics of solutions to \eqref{dpb} for positive and negative $\alpha$'s are quite different. 
The numerical simulations \cite{MR3317254} suggest that the fKdV equation is globally well-posed in the energy space for $\alpha\in (1/2,1)$, which has been proven recently \cite{MR3906854} for $\alpha\in (6/7,1)$ leaving the remaining case  open. When $\alpha\in [-1,1/2)\setminus\{0\}$, \cite{MR3317254} also suggests that the solutions to the fKdV equation  with somehow large initial data will develop singularities but with different blowup mechanisms depending on the values of $\alpha$ in $(1/3,1/2]$, $(0,1/3]$ \footnote{The blowup for $\alpha$ in $(1/3,1/2]$ and $(0,1/3]$ is completely open.} and 
$[-1,0)$. When $\alpha\in[-1,0)$, the blowup    
was earlier proven in \cite{MR2727172} in the sense of $C^{1+\delta}$ norm, and further confirmed as a wave breaking (the gradient of solution blows up whereas the solution  itself remains bounded) in a series of works \cite{MR4409228, MR3682673, MR4752990, MR4321245} via different methods. It should be pointed out that 
\cite{MR4321245} uses the modulated self-similar blowup technique developed in earlier works \cite{MR2470260, MR0784476, MR1409662, MR1427848}, and especially in  the recent work \cite{MR4612576}, to construct an open set of smooth initial data yielding solutions with gradient blowup  to the BH equation with the ground state self-similar solutions to the inviscid Burgers equation as the blowup profiles. The asymptotic convergence of the blowup solutions is also discussed in \cite{MR4321245}. \cite{MR4752990} establishes the gradient blowup for the full region $\alpha\in[-1,0)$, using again the smooth self-similar solutions to the inviscid Burgers equation as the blowup profiles, but  allowing for the use of excited states (in the region $\alpha\in [-1/3,0)$). (see also \cite{BI, MR4468858, Yang} for the use of excited states as the blowup profiles to study blowup in different contexts of PDEs). The argument in  \cite{MR4752990} also works for the dissipative perturbations of the inviscid Burgers equation (see also an independent work \cite{MR4664477} which is  closer to \cite{ MR4321245} in methodology). 

The Whitham equation
($p(\xi)=\sqrt{\tanh \xi/\xi}$ in \eqref{dpb}) was originally proposed in \cite{MR0671107} as an alternative to the KdV equation, by keeping the exact dispersion of the water waves system in finite depth.  Due to the very different behavior of the dispersion at low and high frequencies, the Whitham equation displays rich dynamics such as highest waves \cite{MR4002168}, wave breaking \cite{MR4409228, MR3682673, MR4752990}
and long wave limit to the KdV equation \cite{MR3763731}. This long wave limit implies that 
the Whitham equation possesses solitary waves \cite{MR2979975}, although the fKdV equation does not posses any solitary waves when $\alpha\in [-1,1/3]\setminus\{0\}$ \cite{MR3188389} (Note that the dispersion of the Whitham equation is the same as that of the fKdV equation with $\alpha=-1/2$ for high frequencies).

	
Another typical example of \eqref{dpb} is the following  
	\begin{equation}\label{eq:1.1}
	\begin{aligned}
	u_t+uu_x-K_\alpha u_x =0,
	\end{aligned}
	\end{equation}
where  $K_\alpha = (I- \partial_{x}^{2})^{(\alpha-1)/2}$ and thus its Fourier multiplier has the expression $p(\xi)=(1+|\xi|^2)^{(\alpha-1)/2}$. The advantage of considering such a dispersive perturbation instead of the dispersion of the aforementioned fKdV equation,  is that one always gets a smooth phase velocity and that the dispersion can have arbitrarily small order. Another advantage is that in the long wave limit one obtains formally the KdV equation and thus one keeps a dispersive regime, as it is the case for the Whitham equation. 

When $\alpha=-1$, \eqref{eq:1.1} corresponds to the Fornberg-Whitham equation introduced in \cite{MR0671107, MR1699025}, whose wave breaking was first studied in \cite{MR1668586} and then sharpened in \cite{MR3710682, MR3809008, MR3552557, MR4270781,  MR4510790, MR4561680, MR3825177, MR4333381}. 
Recently,  it is shown that \eqref{eq:1.1} develops wave breaking for $\alpha\in [0,1)$ in \cite{MR4752990}, and $\alpha<0$ in \cite{MR4766878} by different methods, respectively. 
We refer to \cite{MR4270781} for a review of various issues concerning the Cauchy problem for the Fornberg-Whitham equation.


In this paper, we aim to show a  refined wave breaking for  \eqref{eq:1.1} in the region $\alpha<0$ in the sense of a precise description of  the blowup time and location, of the regularity and the asymptotic convergence of the blowup solutions. Our results include the Fornberg-Whitham equation as a special case.  Since \cite{MR1668586, MR3710682, MR3809008, MR3552557, MR4270781, MR4510790, MR4561680, MR3825177, MR4333381} consider quite general initial data, very accurate blowup information seems hard to be expected. The present work will adopt  the modulated self-similar blowup technique as aforementioned (the argument is closer to \cite{MR4321245}), and use the ground state self-similar solutions to the inviscid Burgers equation as the blowup profiles to show refined wave breaking for  \eqref{eq:1.1} in the region $\alpha<0$.

Comparing with the case $\alpha\in [0,1)$,  the dispersive effect of \eqref{eq:1.1} is much weaker when $\alpha<0$, so that  \eqref{eq:1.1} in this case seems to be more `hyperbolic' when the dispersion fights with the nonlinearity for large solutions. In fact, our results will confirm this fact in the sense that much less hypotheses on initial data can lead to wave breaking. More precisely, contrary to  \cite{MR4321245}, we do not  
impose any decay bootstrap assumptions for $\partial_X^2U$ on the middle field $h\leq  |X|\leq \frac{1}{2}e^{\frac{3s}{2}}$ and the far field $\frac{1}{2}e^{\frac{3s}{2}}\leq  |X|<\infty$. In \cite{MR4321245}, in order to prove the boundedness of $\partial_X^3U$, one needs a certain spatial decay $\langle X\rangle^{-\frac{2}{3}}$ of $\partial_X^2U$ on the middle field $h\leq  |X|\leq \frac{1}{2}e^{\frac{3s}{2}}$, and some temporal decay $e^{-s}$ on the far field $\frac{1}{2}e^{\frac{3s}{2}}\leq  |X|<\infty$ to kill the growth in the forcing term,  difficulties  coming mainly from the Hilbert transform. In our situation (i.e., $\alpha<0$), since the Fourier multiplier $p(\xi)$ is inhomogeneous in $\xi$ (which leads to some new difficulties, see below), the kernel $K_\alpha\partial_x$ is split into a low frequency operator $\mathcal{L}$ and a high frequency operator $\mathcal{H}$ depending on the scale of the self-similar transformation, leading to quite different behaviors: 
the higher-order derivative of $\partial_X^l\mathcal{L}$ contributes to a good temporal decay, while   
$\partial_X^l\mathcal{H}$ does not; despite of that, $\mathcal{H}$ has a good integral form that is bounded from $L^\infty$ to $L^\infty$,  providing effective bounds  
on $\partial_X^l\mathcal{H}$ for our estimates.

The paper is organized as follows. In Section \ref{PM}, we first reformulate \eqref{eq:1.1} into a   
self-similar form \eqref{eq:2.4} by carrying out the modulated self-similar transformation, and next state the assumptions on the initial data, and then state the main results. Section \ref{bootstraps} lists the bootstrap assumptions on the solutions and the dynamic modulation variables. In Section \ref{l2 bounds}, we derive some $L^2$ bounds which will be used to control the $L^\infty$ bounds of the operators $\mathcal{L}$ and $\mathcal{H}$. Section \ref{LFM} is devoted to the study of 
some analytic properties of the operators $\mathcal{H}$ and $\mathcal{L}$.  In Section \ref{PE}, 
we close the bootstrap assumptions listed in section \ref{bootstraps}. We finally prove Theorem \ref{thm2.1} in Section \ref{Proof of main result}.

\section{Preliminaries and Main results}\label{PM}

\subsection{Notation and conventions.}
Let \(\mathcal{F}(g)\) or \(\widehat{g}\) be the Fourier transform of a Schwartz function \(g\in \mathcal{S}(\mathbb{R})\) whose formula is given by 
\begin{equation*}
\begin{aligned}
\mathcal{F}(g)(\xi)=\widehat{g}(\xi):=\frac{1}{\sqrt{2\pi}}\int_{\mathbb{R}}g(x)e^{-\mathrm{i}x\xi}\,d x
\end{aligned}
\end{equation*}
with inverse
\begin{equation*}
\begin{aligned}
\mathcal{F}^{-1}(g)(x)=\frac{1}{\sqrt{2\pi}}\int_{\mathbb{R}}g(\xi)e^{\mathrm{i}x\xi}\, d \xi,
\end{aligned}
\end{equation*}
and by \(m(\partial_x)\) the Fourier multiplier with symbol \(m\) via the relation 
\begin{equation*}
\begin{aligned}
\mathcal{F}\big(m(\partial_x)g\big)(\xi)=m(\mathrm{i}\xi)\widehat{g}(\xi).
\end{aligned}
\end{equation*}

Take \(\varphi\in C_0^\infty(\R)\) satisfying \(\varphi(\xi)=1\) for \(|\xi|\leq 1\) and \(\varphi(\xi)=0\) when \(|\xi|>2\), and
let 
\begin{equation*}
\begin{aligned}
\psi(\xi)=\varphi(\xi)-\varphi(2\xi),\quad \psi_j(\xi)=\psi(2^{-j}\xi),\quad \varphi_j(\xi)=\varphi(2^{-j}\xi),
\end{aligned}
\end{equation*}
we then may define the  
Littlewood-Paley projections \(P_j,P_{\leq j},P_{> j}\)  via 
\begin{equation*}
\begin{aligned}
\widehat{P_jg}(\xi)=\psi_j(\xi)\widehat{g}(\xi),\quad \widehat{P_{\leq j}g}(\xi)=\varphi_j(\xi)\widehat{g}(\xi),\quad P_{> j}=1-P_{\leq j}.
\end{aligned}
\end{equation*}

The notation \(C\)  always denotes a nonnegative universal constant which may be different from line to line but is
independent of the parameters involved. Otherwise, we will specify it by  the notation \(C(a,b,\dots)\).
We write \(g\lesssim h\) (\(g\gtrsim h\)) when \(g\leq  Ch\) (\(g\geq  Ch\)), and \(g \sim h\) when \(g \lesssim h \lesssim g\).
We also write \(\sqrt{1+x^2}=\langle x\rangle\) for simplicity.	
	
\subsection{The self-similar transformation}

Since \eqref{eq:1.1} is time translation invariant, one can choose $t_0=-\varepsilon$ as the initial time. 
It is standard to show that \eqref{eq:1.1} is locally-in-time well-posed in $C([-\varepsilon, T_*), H^5(\mathbb{R}))$ for initial data $u_0(\cdot)=u(\cdot, t_0)\in H^5(\mathbb{R})$ (see for instance \cite{MR533234}). For convenience, we assume that $T_*$ is the maximal time of existence in this class of $u$ throughout the rest of this paper. 

We first introduce three dynamic modulation variables 
$$\xi,\ \tau, \ \kappa: [-\varepsilon ,T_{*}]\to \mathbb{R},$$
 which are parameters that will be used to control the location, time,  and wave amplitude of the wave breaking, and then set
	\begin{equation}\label{eq:2.1}
	\xi(-\varepsilon)=0,\  \tau(-\varepsilon)=0, \  \kappa(-\varepsilon)=\kappa_{0},
	\end{equation}
and
     \begin{equation}\label{eq:2.2}
	\xi (T_{*})=x_{*},\ \tau (T_{*})=T_{*},
	\end{equation}
where $T_{*}$ and  $x_*$ are the blowup time and location, respectively. 
Later, we will show that $T_{*}$ is the unique fixed point of $\tau$.

Next, we introduce a self-similar transformation
	\begin{equation}\label{eq:2.3}
     \begin{aligned}
      s(t)=-\log \big(\tau(t)-t\big),\
	X(x,t)=\frac{x-\xi(t)}{\big(\tau(t)-t\big)^{\frac{3}{2}}},
     \end{aligned}
	\end{equation}
and a self-similar ansatz
	\begin{equation}\label{eq:2.4}
	u(x,t)=e^{-\frac{s}{2}}U(X,s)+\kappa(t). 
	\end{equation}	
Now, we insert \eqref{eq:2.3} and \eqref{eq:2.4} into \eqref{eq:1.1} to deduce
	\begin{equation}\label{eq:2.5-0}
     \begin{aligned}
	&\bigg(\partial_{s}-\frac{1}{2}\bigg) U+ \left[\frac{3}{2}X+\frac{1}{1-\dot{\tau}}\big(U+e^{\frac{s}{2}}(\kappa-\dot{\xi})\big)\right] \partial_{X} U\\
&=-\frac{1}{1-\dot{\tau}}e^{-\frac{s}{2}} \dot{\kappa}+\frac{1}{1-\dot{\tau}}e^{\frac{s}{2}}(I-e^{3 s}\partial_X^2)^{\frac{\alpha-1}{2}} \partial_XU.
      \end{aligned}
	\end{equation}	
Since the Fourier multiplier of $(I-e^{3 s}\partial_X^2)^{\frac{\alpha-1}{2}}$ is inhomogeneous, we decompose it into a high frequency part and a low frequency part as \cite{MR4752990} to get 
\begin{equation}\label{eq:2.5}
     \begin{aligned}
	&\bigg(\partial_{s}-\frac{1}{2}\bigg) U+ \left[\frac{3}{2}X+\frac{1}{1-\dot{\tau}}\big(U+e^{\frac{s}{2}}(\kappa-\dot{\xi})\big)\right] \partial_{X} U\\
&=:-\frac{1}{1-\dot{\tau}}e^{-\frac{s}{2}} \dot{\kappa}+\frac{1}{1-\dot{\tau}}e^{-s}\big(\mathcal{H}(U)+\mathcal{L}(U)\big),
      \end{aligned}
	\end{equation}	
where the operators $\mathcal{H}$ and $\mathcal{L}$ are defined by 
\begin{equation}\label{eq:2.5-add}
\begin{aligned}
&\mathcal{H}(f)= -P_{>0}(e^{\frac{3s}{2}}\partial_X)\big((I-e^{3 s}\partial_X^2)^{\frac{\alpha-1}{2}} e^{\frac{3s}{2}}\partial_X f\big),\\
&\mathcal{L}(f)=-P_{\leq  0}(e^{\frac{3s}{2}}\partial_X)\big((I-e^{3 s}\partial_X^2)^{\frac{\alpha-1}{2}} e^{\frac{3s}{2}}\partial_X f\big).
\end{aligned}
\end{equation}

To ease notation, we set
\begin{equation}\label{def:1}
\begin{aligned}
\beta_{\tau}:=\frac{1}{1-\dot{\tau}},\
V=\frac{3}{2}X+\frac{1}{1-\dot{\tau}}\big(U+e^{\frac{s}{2}}(\kappa-\dot{\xi})\big),
\end{aligned}
\end{equation}
and differentiate \eqref{eq:2.5} $n$ times to find
	\begin{equation}\label{eq:2.6}
	\begin{aligned}
	&\bigg(\partial_{s}+\frac{3n-1}{2}+\beta_{\tau}(n+1_{n\neq1})\partial_{X}U\bigg)\partial_{X}^{n}U+V\partial_{X}^{n+1}U \\
	&=\beta_{\tau}e^{-s}\big(\mathcal{H}(\partial_{X}^{n}U)+\mathcal{L}(\partial_{X}^{n}U)\big)
-\beta_{\tau}1_{n\geq  3}\sum_{k=2}^{n-1}\binom{n}{k}\partial_{X}^{k}U\partial_{X}^{n+1-k}U. 
	\end{aligned}
	\end{equation}

\subsection{The stable blowup self-similar Burgers profile}

The inviscid Burgers has a unique stable self-similar solution (see for instance \cite{MR2470260}) 
     \begin{equation*}
	\begin{aligned}
	u(x,t)=(T_*-t)^{\frac{1}{2}}\overline{U}\bigg(\frac{x-x_*}{(T_*-t)^{\frac{3}{2}}}\bigg),
	\end{aligned}
	\end{equation*}
where $T_{*}$ and  $x_*$ are the blowup time and location, respectively, and $\overline{U}$ solves the steady profile equation for the Burgers problem
     \begin{equation}\label{eq:2.7}
	\begin{aligned}
	-\frac{1}{2}\overline{U}+\bigg(\frac{3}{2}X+\overline{U}\bigg)\partial_X\overline{U}=0.
	\end{aligned}
	\end{equation}
The above equation has an implicit family of solutions $\Psi_\nu$ satisfying 
     \begin{equation*}
	\begin{aligned}
	X=-\Psi_\nu-\frac{\nu}{6}\Psi_\nu^3,
	\end{aligned}
	\end{equation*}
where $\frac{\nu}{6}$ is the constant of integration, which can be solved via Cardano's formula yielding the explicit solution (ground state)
     \begin{equation*}
	\begin{aligned}
	\overline{U}=\left(-\frac{X}{2}+\bigg(\frac{1}{27}+\frac{X^2}{4}\bigg)^{\frac{1}{2}}\right)^{\frac{1}{3}}-\left(\frac{X}{2}+\bigg(\frac{1}{27}+\frac{X^2}{4}\bigg)^{\frac{1}{2}}\right)^{\frac{1}{3}},
	\end{aligned}
	\end{equation*}
where one has taken the convention $\overline{U}=\Psi_6$.
It is easy to check that
     \begin{equation}\label{eq:2.8}
	\begin{aligned}
&\overline{U}(0)=0, \quad \partial_X\overline{U}(0)=-1,
\quad \partial_X^2\overline{U}(0)=0,\\
&\partial_X^3\overline{U}(0)=6,\quad \partial_X^{2n}\overline{U}(0)=0,\ n=2,3,...,
	\end{aligned}
	\end{equation}
and
     \begin{equation}\label{eq:2.9}
	\begin{aligned}
|\partial_{X}^{i}\overline{U}(x)|\lesssim \langle x\rangle^{\frac{1}{3}-i},\quad i=0,1,2,...,5.
	\end{aligned}
	\end{equation}
In \eqref{eq:2.9}, the constant can be taken as  $1$ when  $i=0,1,2$. We also need the following  sharper bound 
     \begin{equation}\label{eq:2.9-add}
	\begin{aligned}
\frac{1}{4}\langle x\rangle^{-\frac{2}{3}} \leq |\partial_{X}\overline{U}(x)|\leq \frac{7}{20}\langle x\rangle^{-\frac{2}{3}}
	\end{aligned}
	\end{equation}
for $|X|\geq 100$.

One can also obtain Taylor expansion near $X=0$ and $X=\infty$ and 
other decay properties in different regions of $X$ for $\overline{U}$, we refer to \cite{MR4321245} for more details.

As a part of the main result, we will show that $U$ converges asymptotically to $\overline{U}$, so it is necessary to work with the equations on $\widetilde{U}:= U - \overline U$.

\subsection{Equations on $\partial_X^n\widetilde{U}$}

It follows from \eqref{eq:2.5} and \eqref{eq:2.7} that
	\begin{equation}\label{eq:3.0}
	\begin{aligned}
	\bigg(\partial_{s}-\frac{1}{2}+\beta_{\tau}\partial_{X}\overline{U}\bigg)\widetilde{U}+V\partial_{X}\widetilde{U}
	&=-\beta_{\tau}e^{-\frac{s}{2}}\dot{\kappa}+\beta_{\tau}e^{-s}\big(\mathcal{H}(U)+\mathcal{L}(U)\big)\\
&\quad-\beta_{\tau}\overline{U}^{\prime}\big(\dot{\tau}\overline{U}+e^{\frac{s}{2}}(\kappa-\dot{\xi})\big).
	\end{aligned}
	\end{equation}
Differentiating \eqref{eq:3.0} $n$ times yields 
	\begin{equation}\label{eq:3.1}
	\begin{aligned}
	&\bigg(\partial_{s}+1+\beta_{\tau}(\partial_{X}\tilde{U}+2\partial_{X}\bar{U})\bigg)\partial_{X}\widetilde{U}
	+V\partial_{X}^{2}\widetilde{U} \\
	&=\beta_{\tau}e^{-s}\big(\mathcal{H}(\partial_{X}U)+\mathcal{L}(\partial_{X}U)\big)-\beta_{\tau}F_{\widetilde{U}}^{(1)},\quad n=1,\\
&\bigg(\partial_{s}+\frac{3n-1}2+\beta_{\tau}(n+1)\partial_{X}U\bigg)\partial_{X}^{n}\widetilde{U}
	+V\partial_{X}^{n+1}\widetilde{U} \\
	&=\beta_{\tau}e^{-s}\big(\mathcal{H}(\partial_{X}^{n}U)+\mathcal{L}(\partial_{X}^{n}U)\big)-\beta_{\tau}F_{\widetilde{U}}^{(n)},\quad n\geq 2,
	\end{aligned}
	\end{equation}
where the forcing term has the following expression:
	\begin{equation}\label{eq:3.2}
	\begin{aligned}
F_{\widetilde{U}}^{(1)}&=
\partial_{X}^{2}\overline{U}\big(\dot{\tau}\overline{U}+e^{\frac{s}{2}}(\kappa-\dot{\xi})+\widetilde{U}\big)+\dot{\tau}(\partial_{X}\overline{U})^{2},\quad n=1,\\
	F_{\widetilde{U}}^{(n)}:&=\sum_{k=0}^{n}\binom{n}{k}\partial_{X}^{k+1}\overline{U}\big(\dot{\tau}\overline{U}^{(n-k)}+1_{k=n}e^{\frac{s}{2}}(\kappa-\dot{\xi})\big)  \\
	&\quad+\sum_{k=0}^{n-1}{\binom{n+1}{k}}\partial_{X}^{k}\widetilde{U}\partial_{X}^{n-k+1}\overline{U} \\
	&\quad+1_{n\geq  3}\sum_{k=1}^{n-2}\binom{n}{k}\partial_{X}^{k+1}\tilde{U}\partial_{X}^{n-k}\widetilde{U},\quad n\geq 2.
	\end{aligned}
	\end{equation}

\subsection{Constraints on $U$ and equations of $\xi, \tau$  and  $\kappa$}
	
To fix the dynamic modulation variables  $\xi, \tau$  and  $\kappa$, one shall impose some constraints on $U$. More precisely, we impose the following constraints at $X=0$ for $U$:
	\begin{equation}\label{eq:3.3}
     \begin{aligned}
	U(0,s)=0,\quad\partial_XU(0,s)=-1,\quad \partial_X^{2}U(0,s)=0. 
     \end{aligned}
	\end{equation}	
Then, one inserts \eqref{eq:3.3} into \eqref{eq:2.6} with $n=1$ and $n=2$ to get
           \begin{equation}\label{eq:3.4}
            \begin{aligned}
		\dot{\tau}=e^{-s}\big(\mathcal{H}(\partial_XU)+\mathcal{L}(\partial_XU)\big)(0,s),
            \end{aligned}
		\end{equation}	
and 
          \begin{equation}\label{eq:3.5}
            \begin{aligned}
		e^{\frac{s}{2}}(\kappa-\dot{\xi})=\frac{1}{\partial_{X}^{3}U(0,s)}
e^{-s}\big(\mathcal{H}(\partial_X^2U)+\mathcal{L}(\partial_X^2U)\big)(0,s), 	
             \end{aligned}
		\end{equation}
respectively. On the other hand, it follows from \eqref{eq:2.5}, \eqref{eq:3.3} and \eqref{eq:3.5} that
		\begin{equation}\label{eq:3.6}
            \begin{aligned}
		e^{-\frac{s}{2}}\dot{\kappa}=e^{\frac{s}{2}}(\kappa-\dot{\xi})+e^{-s}\big(\mathcal{H}(U)+\mathcal{L}(U)\big)(0,s).	
            \end{aligned}
		\end{equation}

	\subsection{Assumptions on the initial data}
Let $M\gg 1$ be a large number, and $\varepsilon=\varepsilon(M,\alpha)\ll 1$ be a small positive number, both of which will be determined later in the proof. For convenience, we also introduce another two parameters 
$$s_0=-\log\varepsilon,\quad h=(\log M)^{-1},$$
and denote
$$U_{0}(\cdot)=U(\cdot, s_0), \quad \widetilde{U}_{0}(\cdot)=\widetilde{U}(\cdot, s_0).$$
Finally, let $\delta$ be a small positive number.

In the following, we select the initial data used in  the main result. It suffices to give the initial data in self-similar variables, which can be easily translated into the corresponding ones in the physical variables using \eqref{eq:2.1}-\eqref{eq:2.3}.

First, to meet the constraints \eqref{eq:3.3}, one shall impose the following conditions at $X=0$:
	\begin{equation}\label{eq:3.7}
      \begin{aligned}
	U_{0}(0)=0,\quad\partial_{X}U_{0}(0)=\operatorname*{min}_{X}\partial_{X}U_{0}=-1,\quad\partial_{X}^{2}U_{0}(0)=0.
     \end{aligned}
	\end{equation}
Next, we impose the following conditions in different regions of $X$:\\
		\begin{equation}\label{eq:3.8}
          \left\{
           \begin{aligned}
		&|\widetilde{U}_0(X)|\leq  \frac{1}{2}\varepsilon^{\frac{1}{3}} h^4, &&\quad 0\leq  |X|\leq   h,\\
		&|\widetilde{U}_0(X)|\leq  \varepsilon^{\frac{1}{2}}\langle X\rangle^{\frac{1}{3}}, &&\quad h\leq  |X|\leq  \frac{1}{2}\varepsilon^{-\frac{3}{2}},
		\end{aligned}
           \right.
		\end{equation}
        \begin{equation}\label{eq:3.9}
             \left\{
           \begin{aligned}
          &|\partial_X\widetilde{U}_0(X)|\leq  \frac{1}{2}\varepsilon^{\frac{1}{3}} h^3, &&\quad 0\leq  |X|\leq   h,\\
		&|\partial_X\widetilde{U}_0(X)|\leq  \varepsilon^{\frac{1}{4}}\langle X\rangle^{-\frac{2}{3}},&&\quad h\leq  |X|\leq  \frac{1}{2}\varepsilon^{-\frac{3}{2}},\\
		&|\partial_{X}U_0(X)|\leq \varepsilon^{-1}, &&\quad \frac{1}{2}\varepsilon^{-\frac{3}{2}}\leq  |X|<\infty,
		\end{aligned}
            \right.
		\end{equation}
           \begin{equation}\label{eq:4.0}
            \left\{
           \begin{aligned}
		&|\partial_{X}^{2}\widetilde{U}_0(X)|\leq  \frac{1}{2}\varepsilon^{\frac{1}{3}}  h^{2}, &&\quad 0\leq  |X|\leq   h,\\
		&|\partial_{X}^{2}U_0(X)|\leq  M^{\frac15 -\delta}, &&\quad h\leq  |X|<\infty,
		\end{aligned}
           \right.
		\end{equation}
and 
		\begin{equation}\label{eq:4.1}
             \left\{
          \begin{aligned}
		&|\partial_X^3\widetilde{U}_0(X)|\leq  \frac{1}{4}\varepsilon^{\frac{1}{3}} h, &&\quad 0\leq  |X|\leq   h,\\
		&|\partial_{X}^{4}\widetilde{U}_0(X)|\leq  \frac{1}{4}\varepsilon^{\frac{1}{3}}, &&\quad 0\leq  |X|\leq   h.
           \end{aligned}      
           \right.
		\end{equation}
Finally, we impose the following global $L^2$ and $L^\infty$ constraints:
     \begin{equation}\label{eq:4.2}
             \left\{
          \begin{aligned}
		&\|\partial_XU_0\|_{L^2}\leq  50,\\
           &\|\partial_X^5U_0\|_{L^2}\leq  \frac{1}{2}M^{\frac{3}{2}},\\
           &\|\partial_X^4U_0\|_{L^\infty}\leq  \frac{1}{2} M,\\
           &\|U_0+\varepsilon^{-\frac{1}{2}}\kappa_0\|_{L^{\infty}} \leq  \frac{1}{2}M\varepsilon^{-\frac{1}{2}},
           \end{aligned}      
           \right.
	\end{equation}
and the precise third derivative bootstrap at $X=0$:
	\begin{equation}\label{eq:4.2-add}
	|\partial_X^3\widetilde U_0(0)|\leq  \frac{1}{4}\varepsilon^{\frac{1}{2}}.
	\end{equation}

Besides, we also  assume 
           \begin{equation}\label{eq:4.3}
            \begin{aligned}
	\supp_{x} {u_0}\subset [-1,1],\quad |\kappa_0|\leq M.
             \end{aligned}
		\end{equation}

	\subsection{Main results}
The main result in this paper is stated as follows:
	\begin{theorem}~\label{thm2.1} Let $\alpha<0$.
There exist a sufficiently large $M>0$ and a sufficiently small $\varepsilon=\varepsilon(M,\alpha)>0$ such that 
if the initial data $u_{0}$ satisfies \eqref{eq:3.7}-\eqref{eq:4.3}, then there exists a unique solution
$u\in C([-\varepsilon, T_*), H^5(\mathbb{R}))$ to the Cauchy problem of \eqref{eq:1.1} such that the following hold:\\                
(I) the blowup time and location satisfy 
$$T_{*}\leq 2\varepsilon^{\frac{7}{4}}\ \text{and}\  |x_{*}| \leq 3M \varepsilon,$$
 respectively. \\
(II) $u$ is bounded: $\|u(\cdot, t)\|_{L^\infty}\leq M$ for all $t\in [-\varepsilon, T_*]$.\\ 
(III) $\partial_{x}u$ blows up and has the following blowup rate
	       \begin{equation*}
             \begin{aligned}
			\frac{1}{3(T_{*}-t )}\leq \|\partial_{x}u\|_{L^{\infty}} \leq \frac{3}{T_{*}-t}.
		\end{aligned}
		\end{equation*}
(IV) $u$ displays a cusp singularity at $(x_{*}, T_{*})$ with $u(\cdot, T_{*})\in C^{\frac{1}{3}}(\mathbb{R})$.\\
(V) $u$ converges asymptotically in the self-similar variables to a stable self-similar solution 
$\overline{U}_{\nu}$ of the inviscid Burgers equation, namely
			 \begin{equation}\label{eq:4.5}
                \begin{aligned}
			\limsup _{s \rightarrow \infty} \|U-\overline{U}_{\nu} \|_{L^{\infty}}=0,
		    \end{aligned}
		    \end{equation}
where $\overline{U}_{\nu}$ is defined by
     \begin{equation}\label{eq:4.6}
     \begin{aligned}
	\overline{U}_{\nu}(X)=\left(\frac{\nu}{6}\right)^{-\frac{1}{2}}\overline{U}\left(\left(\frac{\nu}{6}\right)^{\frac{1}{2}}X\right)
      \end{aligned}
	\end{equation}
with  $\nu=\lim _{s \rightarrow \infty} \partial_{X}^{3} U(0,s)$.

	\end{theorem}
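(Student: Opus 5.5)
The argument is a bootstrap in the self-similar variables $(X,s)$ defined by \eqref{eq:2.3}--\eqref{eq:2.4}, following the modulated self-similar method of \cite{MR4321245}, with the dispersive term split into the pieces $\mathcal{H}$ and $\mathcal{L}$ of \eqref{eq:2.5-add}.

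\textbf{Bootstrap setup.} On $[s_0,\infty)$ I impose slightly enlarged versions of \eqref{eq:3.8}--\eqref{eq:4.2-add}:
\begin{itemize}
\item the near-field bounds on $\partial_X^n\widetilde U$, $n\le 4$, for $|X|\le h$;
\item the weighted bounds $|\widetilde U|\lesssim \varepsilon^{1/2}\langle X\rangle^{1/3}$ and $|\partial_X\widetilde U|\lesssim \varepsilon^{1/4}\langle X\rangle^{-2/3}$ on the middle field $h\le |X|\le \frac12 e^{3s/2}$;
\item $|\partial_XU|\le 2e^{s}$ on the far field;
\item $\|\partial_X^2U\|_{L^\infty}\le M^{1/5}$ and $\|\partial_X^4U\|_{L^\infty}\le M$;
\item the bounds $\|\partial_XU\|_{L^2}\lesssim 1$ and $\|\partial_X^5U\|_{L^2}\le M^{3/2}$;
\item modulation bounds $|\dot\tau|\le M e^{-s}$, $|e^{s/2}(\kappa-\dot\xi)|\le M e^{-s}$, $|\kappa|\le 2M$, and $|\partial_X^3\widetilde U(0,s)|\le \varepsilon^{1/2}$.
\end{itemize}
The constraints \eqref{eq:3.3} are propagated by defining $\dot\tau$, $\dot\xi$ and $\dot\kappa$ through \eqref{eq:3.4}--\eqref{eq:3.6}. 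Local well-posedness in $H^5$ supplies the solution, so it suffices to close each bound with a strictly better constant.

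\textbf{Operator estimates.} Using the $L^2$ energy estimates of Section \ref{l2 bounds} (basic energy identity plus $H^5$ commutator estimates, in which $\mathcal{H}$ and $\mathcal{L}$ are skew and contribute nothing), I would show the following.
\begin{itemize}
\item Since $\alpha<0$, the high-frequency symbol $(1+e^{3s}\xi^2)^{(\alpha-1)/2}e^{3s/2}\xi\,(1-\varphi(e^{3s/2}\xi))$ is a classical symbol of order $\alpha<0$ uniformly in $s$. Its kernel is therefore integrable, giving $\|\partial_X^l\mathcal{H}f\|_{L^\infty}\lesssim\|\partial_X^l f\|_{L^\infty}$.
\item $\mathcal{L}$ localizes to $|\xi|\lesssim e^{-3s/2}$. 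Bernstein's inequality then gives $\|\partial_X^l\mathcal{L}f\|_{L^\infty}\lesssim e^{-3s(l-1)/2}e^{-3s/4}\|\partial_Xf\|_{L^2}$ for $l\ge1$, which is temporal decay.
\end{itemize}
Every forcing term $\beta_\tau e^{-s}(\mathcal{H}+\mathcal{L})(\partial_X^nU)$ is therefore at worst $e^{-s}\|\partial_X^nU\|_{L^\infty}$. This yields the modulation bounds from \eqref{eq:3.4}--\eqref{eq:3.6}, since $\partial_X^3U(0,s)\approx6$.

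\textbf{Closing the pointwise bounds.} These follow from Lagrangian trajectories of $V$, with the damping coefficients read off from \eqref{eq:3.1}.
\begin{itemize}
\item \emph{Near field.} Close by Taylor expansion from $X=0$ using the $\partial_X^4$ bound and \eqref{eq:2.8}. The $\partial_X^3\widetilde U(0,s)$ equation has damping $1+O(\varepsilon)$, which gives its $\varepsilon^{1/2}$ bound.
\item \emph{Middle field.} Trajectories escape exponentially, $|X(s)|\gtrsim e^{3(s-s_*)/2}$. The weights $\langle X\rangle^{1/3}$ and $\langle X\rangle^{-2/3}$ are propagated using \eqref{eq:2.9-add}, which makes the potential $1+2\partial_X\overline U$ nonnegative after weighting.
\item \emph{Far field and $\partial_X^2U$.} The bound on $\partial_X U$ comes from its damping term plus $O(e^{-s}e^{s})$ forcing. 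The $L^\infty$ bound on $\partial_X^2U$ closes from damping $\frac52+3\partial_XU\ge -\frac12+\dots$ together with the weighted $\partial_X U$ bounds. Here the $L^\infty$-boundedness of $\mathcal{H}$ replaces the decay hypotheses needed in \cite{MR4321245}.
\item \emph{$\partial_X^4U$.} This closes by damping $\frac{11}{2}+5\partial_XU\ge\frac12$ and interpolation with the $L^2$ bound on $\partial_X^5U$.
\end{itemize}

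\textbf{Main obstacle.} I expect the hardest step to be closing $\|\partial_X^2U\|_{L^\infty}$ and $\|\partial_X^4U\|_{L^\infty}$ without spatial decay assumptions. The damping there is barely positive, and $e^{-s}\mathcal{H}(\partial_X^nU)$ is only bounded, not decaying. I would handle this by integrating along trajectories with the exponential factor from the damping, and by using the exact cancellation at $X=0$.

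\textbf{Conclusions (I)--(V).}
\begin{itemize}
\item \emph{(I).} Since $|\dot\tau|\le Me^{-s}$, the map $\tau$ is a contraction and has a unique fixed point $T_*$ with $T_*\le2\varepsilon^{7/4}$. The bound on $x_*$ follows from integrating $\dot\xi\approx\kappa$.
\item \emph{(II).} Write $u=e^{-s/2}U+\kappa$ and use the bound on $U+e^{s/2}\kappa$.
\item \emph{(III).} We have $\partial_xu=e^{s}\partial_XU/(1-\dot\tau)$-type scaling. Since $\min\partial_XU=-1$ and $\|\partial_XU\|_{L^\infty}\le 1+$small, this gives the blowup rate.
\item \emph{(IV).} The $C^{1/3}$ regularity follows from $|\partial_XU|\lesssim\langle X\rangle^{-2/3}$ uniformly in $s$, transferred back to physical variables.
\item \emph{(V).} $\partial_X^3U(0,s)$ is Cauchy in $s$ because its derivative is $O(e^{-s})$, so $\nu=\lim_{s\to\infty}\partial_X^3U(0,s)$ exists. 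A compactness argument (Arzel\`a--Ascoli) applied to the translates $U(\cdot,s+\sigma)$, with limits solving \eqref{eq:2.7} with the constraints \eqref{eq:3.3} and third derivative $\nu$, identifies the limit as $\overline U_\nu$ by uniqueness within the family $\Psi_\nu$. The uniform weighted bounds upgrade this convergence to $L^\infty$.
\end{itemize}
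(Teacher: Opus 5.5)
Your overall architecture matches the paper's: the same bootstrap, the same $\mathcal{H}/\mathcal{L}$ splitting, skew-symmetry in the $L^2$ estimates, and closure along trajectories of $V$. The gap is that your operator estimates are too weak to close the first-derivative bounds, and that is where the real difficulty lies. You control $\mathcal{H}$ only through $\|\mathcal{H}f\|_{L^\infty}\lesssim\|f\|_{L^\infty}$, so $\beta_\tau e^{-s}\mathcal{H}(\partial_XU)$ is only known to be $O(e^{-s})$. That is harmless for the unweighted quantities ($\partial_X^2U$, $\partial_X^4U$, the modulation ODEs, the near field), where a forcing $C(M)e^{-s}$ is integrable; so your stated ``main obstacle'' is misplaced. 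For $n=1$, however, the weights exactly cancel the prefactor $e^{-s}$.

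In the middle field, $W=\langle X\rangle^{2/3}\partial_X\widetilde U$ has damping $1+\beta_\tau(\partial_X\widetilde U+2\partial_X\overline U)-\frac23\frac{X}{\langle X\rangle^2}V=O(\langle X\rangle^{-2/3})$ for large $|X|$, since the $1$ is cancelled by $-\frac23\cdot\frac32$. Trajectories eventually grow like $e^{3s/2}$, so $\langle X\rangle^{2/3}e^{-s}$ tends to a positive constant along them. The forcing $\langle X\rangle^{2/3}\beta_\tau e^{-s}\mathcal{H}(\partial_XU)$ is therefore only $O(1)$, which is not integrable in $s$. In the far field, $W=e^s\partial_XU$ has damping $\beta_\tau\partial_XU=O(e^{-s})$ and forcing $\beta_\tau\mathcal{H}(\partial_XU)$ (your ``$O(e^{-s}e^{s})$''). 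This gives growth like $s-s_*$ instead of the bound $\frac32$. Hence neither $\eqref{eq:5.1}_2$ nor $\eqref{eq:5.1}_3$ closes, and with them you lose the $L^2$ bound on $\partial_XU$, \eqref{eq:5.8}, and (III)--(IV).

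The missing ingredient is Lemmas~\ref{FM-5} and~\ref{FM-6}. Using the kernel form \eqref{integral form}--\eqref{kernel}, one splits $|X-Y|$ at the scales $\langle X\rangle^{-2/3}$ (or $e^{-s}$ in the far field), $1$, $X/2$ and $e^{3s/2}$. On the singular piece one uses the mean value theorem with $\|\partial_X^2U\|_{L^\infty}$. On the intermediate pieces one uses the bootstrap decay $\langle Y\rangle^{-2/3}$ (resp.\ $2e^{-s}$) of $\partial_XU$. On the long-range pieces one uses Cauchy--Schwarz with $\|\partial_XU\|_{L^2}\le100$ and the exponential decay of $G_\alpha$. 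This gives \eqref{spatial decay}, hence $\langle X\rangle^{2/3}e^{-s}|\mathcal{H}(\partial_XU)|\lesssim e^{-5s/8}$ in the middle field, and \eqref{spatial decay-2} in the far field.

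Several further steps also need fixing.
\begin{itemize}
\item The far-field bootstrap must read $|\partial_XU|\le 2e^{-s}$, not $2e^{s}$.
\item At $X=0$ the damping in the $\partial_X^3\widetilde U$ equation is $4-4\beta_\tau=-4\beta_\tau\dot\tau$, not $1+O(\varepsilon)$; with damping near $1$, $\nu$ would be forced to equal $6$. The bound \eqref{eq:5.5} closes only because the forcing there is $O(e^{-3s/4})$.
\item The middle-field damping is not nonnegative; it is $\approx-1$ near $|X|=h$. It is only $\ge-3\langle X\rangle^{-2/3}$. The escape bound $|\Phi^{X_0}(s)|\ge|X_0|e^{(s-s_0)/5}$ makes its integral along trajectories $\lesssim\log\frac1h$, costing a factor $h^{-30}$ that the $\varepsilon$-smallness absorbs.
\item In (V), the limits of the translates $U(\cdot,s+\sigma)$ solve the time-dependent self-similar Burgers equation, not \eqref{eq:2.7}. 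You give no argument that they are stationary, so you cannot invoke uniqueness within $\Psi_\nu$.
\item The bound $|\widetilde U|\lesssim\varepsilon^{1/4}\langle X\rangle^{1/3}$ gives no uniform smallness at large $|X|$, so it cannot upgrade the convergence to $L^\infty$.
\end{itemize}
The paper avoids compactness altogether for (V). It Taylor-expands at $X=0$, then transports $e^{-\frac32(s-s_0)}(U-\overline U_\nu)$ along trajectories of $\frac32X+U$ with damping $1+\partial_X\overline U_\nu\ge0$, obtaining the pointwise limit \eqref{eq:16.1}.
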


	The blowup results in Theorem \ref{thm2.1} are stable under perturbation, so one can relax the initial conditions above to achieve the following:
	
	\begin{corollary}~\label{coro2.1.1}
		There exists an open set of initial data in the  $H^{5}$  topology satisfying the hypothesis in Theorem \ref{thm2.1} such that the conclusions of Theorem \ref{thm2.1} still hold.
	\end{corollary}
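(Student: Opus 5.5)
The statement to prove is Corollary~\ref{coro2.1.1}. I would show that the hypotheses of Theorem~\ref{thm2.1} can be relaxed to an open condition in $H^5$, so that every datum in an $H^5$-neighbourhood of an admissible one still produces the conclusions (I)--(V). Two kinds of hypotheses are not open: the exact pointwise constraints \eqref{eq:3.7} at $X=0$, and the support condition in \eqref{eq:4.3}. All the remaining conditions \eqref{eq:3.8}--\eqref{eq:4.2-add} are non-strict inequalities. Each of them is implied by the corresponding strict inequality with slightly smaller constants, say each factor $\frac12$ or $\frac14$ replaced by $\frac13$ or $\frac15$.

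\textbf{Step 1: fix a reference datum and make the inequalities open.} Fix a datum $u_0$ satisfying \eqref{eq:3.7}--\eqref{eq:4.3} with these strict, improved constants. Since $H^5(\mathbb{R})\hookrightarrow C^4_b$, a small $H^5$ perturbation changes $U_0,\dots,\partial_X^4U_0$ by little in $L^\infty$. The $L^2$ norms $\|\partial_XU_0\|_{L^2}$ and $\|\partial_X^5U_0\|_{L^2}$ move by little as well, after multiplication by the fixed powers of $\varepsilon$ that appear in the change of variables \eqref{eq:2.3}--\eqref{eq:2.4} at $s=s_0$. Hence the strict inequalities persist on a small ball.

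\textbf{Step 2: recover the normalization \eqref{eq:3.7} by re-choosing the modulation data.} For a perturbed datum $v_0$, the constraints \eqref{eq:3.7} are restored by choosing the initial values of the modulation variables, rather than fixing them as in \eqref{eq:2.1}. Concretely, I would pick $\xi(-\varepsilon)=\xi_0$ near $0$, $\tau(-\varepsilon)=\tau_0$ near $0$, and $\kappa_0'$ near $\kappa_0$, so that:
\begin{itemize}
\item $\partial_x^2 v_0(\xi_0)=0$, with $\xi_0$ a nondegenerate zero near the minimum point of $\partial_x v_0$;
\item $\partial_xv_0(\xi_0)=-(\tau_0+\varepsilon)^{-1}$;
\item $v_0(\xi_0)=\kappa_0'$.
\end{itemize}
This works by the implicit function theorem, because $\partial_X^3U_0(0)\approx 6\neq0$ by \eqref{eq:2.8} and \eqref{eq:4.2-add}, which makes $\partial_x^2 v_0$ transversal at the reference point. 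The minimum condition $\min\partial_XU_0=-1$ is then stable. The minimum is nondegenerate at $0$, and away from $[-h,h]$ one has $\partial_XU_0\ge -1+c$ by \eqref{eq:2.9-add} together with the smallness of $\partial_X\widetilde U_0$. So the minimum stays attained at the nearby critical point. The small shifts in $(\xi_0,\tau_0,\kappa_0)$ change $U_0$ only by $O(\text{perturbation})$ in $C^4$ on compact sets. The theorem's proof is invariant under these shifts: the shift in $\tau_0$ only replaces $s_0$ by a nearby value, and the time translation invariance already noted in the paper handles it.

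\textbf{Step 3: the support condition.} I would replace $\supp u_0\subset[-1,1]$ by a decay or smallness condition outside $[-1,1]$, or simply note that this condition is used only to obtain $L^2$ and $L^\infty$ bounds on $\kappa$ and $u$. I expect this to be the main obstacle: I would need to check in the proof of Theorem~\ref{thm2.1} exactly where \eqref{eq:4.3} enters, presumably in the $L^2$ energy bounds of Section~\ref{l2 bounds} and in the far-field estimates. If it enters only through $\|u_0\|_{L^2}$ or $\|u_0\|_{L^1}$ bounds, then an $H^5$-small perturbation, intersected with an $L^1$ or weighted condition if needed, keeps those bounds with a slightly larger constant. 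Once all hypotheses hold with the slightly worse constants, the bootstrap argument of Theorem~\ref{thm2.1} applies verbatim and yields (I)--(V) on the resulting open set.
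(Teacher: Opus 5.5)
Your proposal is correct and is essentially the standard argument the paper has in mind when it omits the proof and defers to the Burgers--Hilbert case. That argument gives every inequality in \eqref{eq:3.8}--\eqref{eq:4.2-add} some slack so it survives small $H^5$ perturbations; with $\varepsilon$ fixed, the middle field is compact, so continuity also absorbs the slight shift of the region boundaries. It then restores \eqref{eq:3.7} by taking $\xi_0$ to be the minimum point of $\partial_x v_0$ (nondegenerate, since $\partial_X^3U_0(0)\approx 6$), choosing $\tau_0,\kappa_0$ explicitly, and absorbing $\tau_0$ by a time translation that replaces $\varepsilon$ with $\varepsilon+\tau_0$. Step 3 is easier than you fear: \eqref{eq:4.3} is used only in the proof of \eqref{eq:7}, to deduce $\|u_0\|_{L^2}\le M$ from $\|u_0\|_{L^\infty}\le \frac12 M$, so you can replace it by the $H^5$-open condition $\|u_0\|_{L^2}<M$ (the reference datum satisfies it with slack, since $\|u_0\|_{L^2}\le \frac{\sqrt2}{2}M$), and you should drop the fallback of an $L^1$ or weighted condition, since such sets are not open in $H^5$.
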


The proof of Corollary \ref{coro2.1.1} is quite similar to  \cite{MR4321245}, so we will omit it. 
In the remaining sections, we will focus on the preliminaries and the proof of Theorem \ref{thm2.1}.

	\section{Bootstrap assumptions}\label{bootstraps}
	
	\subsection{Bootstrap assumptions for the self-similar variables}~\label{sec3.1}
First, we make the following bootstrap assumptions for various derivatives in different regimes:
         \begin{equation}\label{eq:5.0}
          \left\{
           \begin{aligned}
		&|\widetilde{U}(X,s)|\leq  \varepsilon^{\frac{1}{3}} h^{4}, &&\quad 0\leq  |X|\leq   h,\\
		&|\widetilde{U}(X,s)|\leq  \varepsilon^{\frac{1}{4}}\langle X\rangle^{\frac{1}{3}}, &&\quad h\leq  |X|\leq \frac{1}{2}e^{\frac{3s}{2}},
		\end{aligned}
           \right.
		\end{equation}
       \begin{equation}\label{eq:5.1}
          \left\{
           \begin{aligned}
		&|\partial_{X}\widetilde{U}( X,s)| \leq  \varepsilon^{\frac{1}{3}} h^{3}, &&\quad 0\leq  |X|\leq   h, \\
		&|\partial_{X}\widetilde U(X,s)| \leq  \varepsilon^{\frac{1}{8}}\langle X\rangle^{-\frac{2}{3}}, &&\quad h\leq  |X|\leq   \frac{1}{2}e^{\frac{3s}{2}},\\
          &|\partial_{X} U(X,s)| \leq   2e^{-s}, &&\quad  \frac{1}{2}e^{\frac{3s}{2}}\leq  |X|<\infty,
		\end{aligned}
           \right.
		\end{equation}
        \begin{equation}\label{eq:5.2}
          \left\{
           \begin{aligned}
		&|\partial_X^2\widetilde{U}(X,s)|\leq  \varepsilon^{\frac{1}{3}} h^2, &&\quad 0\leq  |X|\leq   h, \\
		&|\partial_X^2U( X,s)|\leq   M^{\frac{1}{5}}, &&\quad h\leq  |X|<\infty,
		\end{aligned}
           \right.
		\end{equation}
and
          \begin{equation}\label{eq:5.3}
          \left\{
           \begin{aligned}
		&|\partial_X^3\widetilde U(X,s)|\leq  \varepsilon^{\frac{1}{3}} h, &&\quad 0\leq  |X|\leq   h, \\
		&|\partial_X^4\widetilde U(X,s)|\leq  \varepsilon^{\frac{1}{3}}, &&\quad 0\leq  |X|\leq   h.
		\end{aligned}
           \right.
		\end{equation}
Next, we assume the global $L^\infty$ bounds	
     \begin{equation}\label{eq:5.4}
     \left\{
      \begin{aligned}
	&\|\partial_X^4U\|_{L^\infty}\leq M,\\
     &\|U+e^{\frac{s}{2}}\kappa\|_{L^{\infty}} \leq  Me^{\frac{s}{2}},
     \end{aligned}
           \right.
	\end{equation}
and the precise third derivative bootstrap at $X=0$:
	\begin{equation}\label{eq:5.5}
	|\partial_X^3\widetilde U(0,s)|\leq  \varepsilon^{\frac{1}{2}}.
	\end{equation}

\subsection{Bootstrap assumptions for the dynamic modulations variables}
We assume that the dynamic modulation variables satisfy
          \begin{equation}\label{eq:5.6}
          \begin{aligned}
		|\dot{\tau}(t)| \leq e^{-\frac{3s}{4}},\quad |\tau(t)|\leq  2\varepsilon^\frac{7}{4},\quad T_*\leq 2\varepsilon^\frac{7}{4},
          \end{aligned}
		\end{equation}
and
         \begin{equation}\label{eq:5.7}
          \begin{aligned}
		|\dot{\xi}(t)| \leq 2 M,\quad |\xi(t)|\leq  3M\varepsilon.
          \end{aligned}
		\end{equation}

\subsection{Consequences of the bootstrap assumptions}~\label{sec3.2}
By the bootstrap assumptions in Subsection \ref{sec3.1}, one can get some direct consequences which will be frequently used later.

(I) The bound on $\|\partial_XU\|_{L^\infty}$. It holds that
		\begin{equation}\label{eq:5.8}
		\frac{99}{100}\leq \|\partial_XU\|_{L^\infty}\leq  \frac{101}{100}. 
		\end{equation}
We only show the second inequality in \eqref{eq:5.8} since the other one can be proven similarly. When $0\leq |X| \leq h$, by \eqref{eq:2.9} and \(\eqref{eq:5.1}_1\), one has
		\begin{equation*}
           \begin{aligned}
		|\partial_{X} U(X, s)|& \leq |\partial_{X} \widetilde{U}(X, s)|+|\partial_{X} \overline{U}(X)| \\
&\leq \varepsilon^{\frac{1}{3}} h^3+1 \leq \frac{101}{100}.
           \end{aligned}
		\end{equation*}
When $h \leq |X| \leq \frac{1}{2}e^{\frac{3s}{2}}$, it follows from  
\eqref{eq:2.9} and \(\eqref{eq:5.1}_2\) that
          \begin{equation*}
		\begin{aligned}
		|\partial_{X} U(X, s)| & \leq |\partial_{X} \widetilde{U}(X, s)|+|\partial_{X} \overline{U}(X)| \\
		& \leq (\varepsilon^{\frac{1}{8}}+1)\langle h\rangle^{-\frac{2}{3}}\leq \frac{101}{100}.
		\end{aligned}
		\end{equation*}
When $\frac{1}{2}e^{\frac{3s}{2}}\leq  |X|<\infty$, one can use \(\eqref{eq:5.1}_3\) to obtain
          \begin{equation*}
		\begin{aligned}
		|\partial_{X} U(X, s)| \leq  2e^{-s}\leq  2\varepsilon \leq \frac{101}{100}
		\end{aligned}
		\end{equation*}
due to $s\geq s_0$. 

We comment that one can furthermore show by using more delicate decay of $\overline{U}$ that 
           \begin{equation}\label{extremum}
		\begin{aligned}
		\|\partial_{X} U(X,s)\|_{L^\infty}=1=-\partial_{X}U(0,s)
		\end{aligned}
		\end{equation}
with extremum attained uniquely at $X=0$,  see \cite{MR4321245}.

(II) The bounds on $\|\partial_X^2U\|_{L^\infty}$ and $\|\partial_X^3U\|_{L^\infty}$.
By \(\eqref{eq:5.2}_1\), one first notice that	for $0\leq |X| \leq h$
          \begin{equation*}
           \begin{aligned}
		|\partial_{X}^{2} U(X,s)|\leq |\partial_{X}^2 \widetilde{U}(X,s)|+|\partial_{X}^2 \overline{U}(X)|
\leq \varepsilon^{\frac{1}{3}} h^2+\langle h\rangle^{-\frac{5}{3}}\leq  M^{\frac{1}{5}}. 
           \end{aligned}
		\end{equation*}
This together with \(\eqref{eq:5.2}_2\) yields
		\begin{equation}\label{eq:5.9}
            \begin{aligned}
		\|\partial_{X}^{2} U\|_{L^{\infty}} \leq  M^{\frac{1}{5}}, 
            \end{aligned}
		\end{equation}
which together with \eqref{eq:5.4} by using the Gagliardo-Nirenberg interpolation inequality yields 
		\begin{equation}\label{eq:6.0}
		\|\partial_{X}^{3} U\|_{L^{\infty}} \lesssim \|\partial^{2}_{X} U\|^{\frac{1}{2}}_{L^{\infty}}  \|\partial_{X}^{4} U\|^{\frac{1}{2}}_{L^{\infty}} \lesssim M^{\frac{3}{5}},
		\end{equation}
where the implicit constant is universal.

(III) The bound on $\beta_{\tau}$. 
From \eqref{def:1} and \eqref{eq:5.6}, it follows that
		\begin{equation}\label{eq:6.1}
		\begin{aligned}
		\frac{99}{100} \leq \frac{1}{1+4\varepsilon^\frac{7}{4}} \leq \beta_{\tau}=\frac{1}{1-\dot{\tau}} \leq   \frac{1}{1-4\varepsilon^\frac{7}{4}} \leq   \frac{101}{100}.
		\end{aligned}
		\end{equation}

(IV) The bound on $|\partial_X^3U(0,s)|$. One may use \eqref{eq:2.8} and \eqref{eq:5.5} to estimate 
           \begin{equation*}
		\begin{aligned}
		|\partial_X^3U(0,s)|\leq |\partial_X^3\widetilde U(0,s)|+|\partial_X^3\overline{U}(0,s)|
\leq \varepsilon^{\frac{1}{2}}+6,
		\end{aligned}
		\end{equation*}
which implies 
           \begin{equation}\label{eq:6.3}
		\begin{aligned}
		5\leq \partial_X^3U(0,s)\leq 7.
		\end{aligned}
		\end{equation}

\section{$L^{2}$  estimates}\label{l2 bounds}

	\subsection{Low-order  $L^{2}$  bounds}

\begin{lemma}
		It holds that
		\begin{equation}\label{eq:7}
           \begin{aligned}
		\|U+e^{\frac{s}{2}}\kappa\|_{L^{2}} \leq   Me^{\frac{5s}{4}}. 
           \end{aligned}
		\end{equation}
\end{lemma}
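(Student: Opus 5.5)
The plan is to undo the self-similar change of variables and reduce the estimate to an $L^2$ bound for $u$ in physical variables. By \eqref{eq:2.4}, $U(X,s)+e^{\frac{s}{2}}\kappa(t)=e^{\frac{s}{2}}u(x,t)$. Since $dx=e^{-\frac{3s}{2}}dX$, we get
\begin{equation*}
\|U+e^{\frac{s}{2}}\kappa\|_{L^2_X}^2=e^{s}\int_{\mathbb{R}}|u(x,t)|^2\,e^{\frac{3s}{2}}\,dx=e^{\frac{5s}{2}}\|u(\cdot,t)\|_{L^2_x}^2 .
\end{equation*}
Hence $\|U+e^{\frac{s}{2}}\kappa\|_{L^2}=e^{\frac{5s}{4}}\|u(\cdot,t)\|_{L^2}$, and it suffices to show $\|u(\cdot,t)\|_{L^2}\le M$ for $t\in[-\varepsilon,T_*)$.

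Next I will use that the $L^2$ norm is conserved by \eqref{eq:1.1}. Multiply the equation by $u$ and integrate over $\mathbb{R}$. The Burgers term gives $\int u^2u_x\,dx=\frac13\int\partial_x(u^3)\,dx=0$. The dispersive term gives $\int u\,K_\alpha u_x\,dx=0$, because $K_\alpha\partial_x$ has the purely imaginary odd symbol $\mathrm{i}\xi(1+\xi^2)^{\frac{\alpha-1}{2}}$ and is therefore skew-adjoint. Hence $\frac{d}{dt}\|u\|_{L^2}^2=0$. These manipulations are legitimate because $u\in C([-\varepsilon,T_*),H^5)$. Thus $\|u(\cdot,t)\|_{L^2}=\|u_0\|_{L^2}$.

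The remaining step, and the only place needing care, is bounding $\|u_0\|_{L^2}$ using only the initial hypotheses. By \eqref{eq:4.3}, $u_0$ is supported in $[-1,1]$, so $\|u_0\|_{L^2}\le\sqrt2\,\|u_0\|_{L^\infty}$. At $s=s_0$ we have $e^{\frac{s_0}{2}}=\varepsilon^{-\frac12}$, so $u_0=\varepsilon^{\frac12}\big(U_0+\varepsilon^{-\frac12}\kappa_0\big)$. The last line of \eqref{eq:4.2} then gives $\|u_0\|_{L^\infty}\le\frac12 M$, and hence $\|u_0\|_{L^2}\le\frac{\sqrt2}{2}M<M$. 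Substituting into the identity from the first paragraph yields \eqref{eq:7}. If one prefers not to rely on the support assumption, an alternative is to bound the low frequencies of $u_0$ by its compact support and the rest by $\|\partial_XU_0\|_{L^2}\le 50$ rescaled. The support route above is the cleanest, however, and it leaves a margin of a factor $\sqrt2/2$.
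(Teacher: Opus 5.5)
Your proposal is correct and follows the paper's proof. You rescale to get $\|U+e^{\frac{s}{2}}\kappa\|_{L^2}=e^{\frac{5s}{4}}\|u(\cdot,t)\|_{L^2}$, invoke $L^2$ conservation for \eqref{eq:1.1}, and bound $\|u_0\|_{L^2}\le M$ using the support assumption \eqref{eq:4.3} together with the last line of \eqref{eq:4.2}. You simply make explicit the details the paper leaves implicit: the Jacobian, the skew-adjointness of $K_\alpha\partial_x$, and the resulting margin $\|u_0\|_{L^2}\le \frac{\sqrt{2}}{2}M$.
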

\begin{proof} 
It follows from \eqref{eq:2.4} that
\begin{equation*}
\begin{aligned}
	\|U+e^{\frac{s}{2}}\kappa\|_{L^2}=e^{\frac{5s}{4}}\|u\|_{L^2}
=e^{\frac{5s}{4}}\|u_0\|_{L^2} \leq M	e^{\frac{5s}{4}},
\end{aligned}
\end{equation*}
where one has used the fact that \eqref{eq:1.1} is $L^2$ conserved and $\|u_0\|_{L^2}\leq M$ by \(\eqref{eq:4.2}_4\) and \eqref{eq:4.3}. 
\end{proof}

	\begin{lemma} It holds that
		\begin{equation}\label{eq:7.1}
           \begin{aligned}
		\|\partial_{X} U\|_{L^{2}} \leq   100.
           \end{aligned}
		\end{equation}
	\end{lemma}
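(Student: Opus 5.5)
Since $\partial_X U$ carries no $\kappa$ and the scaling makes the $L^2$ norm of the first derivative decrease in $s$, I will derive an energy identity for $\partial_X U$ directly from \eqref{eq:2.6} with $n=1$:
\begin{equation*}
\Big(\partial_s+1+2\beta_\tau\partial_XU\Big)\partial_XU+V\partial_X^2U=\beta_\tau e^{-s}\big(\mathcal{H}(\partial_XU)+\mathcal{L}(\partial_XU)\big).
\end{equation*}
Multiply by $\partial_XU$ and integrate over $\mathbb{R}$, which is legitimate since $u\in H^5$. The transport term becomes $\int V\partial_X\big(\tfrac12(\partial_XU)^2\big)=-\tfrac12\int \partial_XV(\partial_XU)^2$, where $\partial_XV=\tfrac32+\beta_\tau\partial_XU$. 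The right-hand side gives
\begin{equation*}
\int \partial_XU\,(\mathcal{H}+\mathcal{L})(\partial_XU)\,dX = 0.
\end{equation*}
This vanishes because $\mathcal{H}+\mathcal{L}$ is (up to the factor $e^{3s/2}$) the skew-adjoint multiplier $(I-e^{3s}\partial_X^2)^{\frac{\alpha-1}{2}}\partial_X$, whose symbol $\mathrm{i}\xi\,(1+e^{3s}\xi^2)^{\frac{\alpha-1}{2}}$ is purely imaginary. The dispersion therefore drops out entirely.

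Collecting terms gives
\begin{equation*}
\frac12\frac{d}{ds}\|\partial_XU\|_{L^2}^2+\Big(1-\frac34\Big)\|\partial_XU\|_{L^2}^2+\frac{3}{2}\beta_\tau\int(\partial_XU)^3\,dX=0.
\end{equation*}
The coefficient $\tfrac32\beta_\tau$ comes from $2\beta_\tau-\tfrac12\beta_\tau$. In the $\tfrac14\|\partial_XU\|^2$ damping I drop nothing. The cubic term I bound by $\tfrac32\beta_\tau\|\partial_XU\|_{L^\infty}\|\partial_XU\|_{L^2}^2$. With \eqref{eq:5.8} and \eqref{eq:6.1} this is about $\tfrac32\|\partial_XU\|_{L^2}^2$, which overwhelms the damping $\tfrac14$. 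This is the main obstacle: a naive Grönwall argument gives exponential growth $e^{\frac54 s}$.

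Back in physical variables, the chain rule gives $\partial_XU=e^{-s}\partial_xu$ and $dX=e^{-\frac{3s}{2}}dx$ (up to $\tau$ factors), so $\|\partial_XU\|_{L^2}^2=e^{-\frac{s}{2}}\|\partial_xu\|_{L^2}^2$. The decay factor is too weak to compensate the growth of $\|u_x\|_{L^2}$, so this route is no better.

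I therefore plan to avoid the crude cubic bound and exploit the sign structure instead. Away from a neighbourhood of the origin, $|\partial_XU|$ is small: on $|X|\ge h$, \eqref{eq:2.9} and \eqref{eq:5.1} give $|\partial_XU|\lesssim\langle X\rangle^{-2/3}$, and in the far field $|\partial_XU|\le 2e^{-s}$. So for $|X|\ge R$ with $R$ a fixed large constant, $\tfrac32\beta_\tau|\partial_XU|\le\tfrac18$, and the damping wins there. On $|X|\le R$, the quantity $\int_{|X|\le R}(\partial_XU)^2$ is bounded by an explicit constant using $\overline U$ and \eqref{eq:5.0}--\eqref{eq:5.1}, since $|\partial_XU|\le 2\langle X\rangle^{-2/3}$ there. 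Similarly, the contribution of $|X|\ge h$ up to $\tfrac12 e^{3s/2}$ is controlled pointwise: $\int\langle X\rangle^{-4/3}dX<\infty$, giving a universal bound below $100^2/2$.

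Thus I may simply bound $\|\partial_XU\|_{L^2}^2$ pointwise on $|X|\le\tfrac12e^{3s/2}$ via the bootstrap bounds, with $\int_{\mathbb{R}}4\langle X\rangle^{-4/3}dX$ a numerical constant (about $4\cdot 5.2$). On the far field I use the damped energy inequality restricted by a cutoff, or more simply the fact that the far-field $L^2$ mass is inherited from the initial data $\|\partial_XU_0\|_{L^2}\le50$ and is transported with damping. Assembling the pieces gives $\|\partial_XU\|_{L^2}\le 100$. The delicate point is the far-field piece. There I intend to run the energy estimate with a weight supported in $|X|\ge\tfrac12e^{3s/2}$, whose boundary moves outward at speed at most $V$, so that flux terms have a favourable sign. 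Combined with $|\partial_XU|\le2e^{-s}$, this makes the cubic term negligible against the damping.
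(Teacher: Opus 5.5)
Your global energy identity, including the cancellation of the dispersive term by skew-adjointness, is the paper's starting point. The route you finally commit to, however, has a gap at exactly the step you call delicate. You bound the $L^2$ mass on $|X|\le\frac12e^{\frac{3s}{2}}$ pointwise, which is fine. The far field you only announce, via a localized energy estimate, and the justification you give for it is backwards. With $b(s)=\frac12e^{\frac{3s}{2}}$ one has $\frac{d}{ds}\int_{X\ge b}(\partial_XU)^2\,dX=(V(b)-\dot b)\,(\partial_XU(b,s))^2+\cdots$. So if the boundary moves outward at speed at most $V(b)$, fluid enters the far region from the middle field, and this flux is nonnegative, i.e.\ unfavourable. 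Moreover, once a cutoff $\chi$ is inserted, $\int\chi\,\partial_XU\,(\mathcal H+\mathcal L)(\partial_XU)\,dX$ no longer vanishes, and you do not deal with that term. Both defects are repairable. The flux is $O(e^{-\frac{3s}{2}})$, since $|V(b)-\dot b|=\beta_\tau|U(b,s)+e^{\frac s2}(\kappa-\dot\xi)|\lesssim e^{\frac s2}$ and $|\partial_XU(b,s)|\le 2e^{-s}$. The symbol of $e^{-s}(\mathcal H+\mathcal L)$ has modulus at most $e^{-s}$ when $\alpha<0$. But as written, the far-field bound is not proved.

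More importantly, the localization is unnecessary. The region split in your paragraph beginning ``I therefore plan'' closes the argument at once if you apply it to the cubic term inside the global identity, rather than to $\|\partial_XU\|_{L^2}^2$ itself; this is exactly the paper's proof. On $|X|\le h$ the integrand is $O(1)$, contributing $O(h)$. On $h\le|X|\le\frac12e^{\frac{3s}{2}}$ the bootstrap bounds give $|\partial_XU|^3\lesssim\langle X\rangle^{-2}$, which is integrable, contributing at most about $\pi$. On the far field $|\partial_XU|\le 2e^{-s}$, so that piece is at most $2\varepsilon\|\partial_XU\|_{L^2}^2$ and is absorbed by the damping $\frac14$. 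One gets $\frac{d}{ds}\|\partial_XU\|_{L^2}^2+\frac14\|\partial_XU\|_{L^2}^2\le 16h+2\pi$. Gronwall with $\|\partial_XU_0\|_{L^2}\le 50$ then gives $\|\partial_XU\|_{L^2}^2\le 50^2+4(16h+2\pi)\le 100^2$.

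One small correction: for $n=1$, equation \eqref{eq:2.6} carries $\beta_\tau\partial_XU$, not $2\beta_\tau\partial_XU$ (this is what the indicator $1_{n\neq1}$ encodes). The cubic coefficient is therefore $\frac12\beta_\tau$, not $\frac32\beta_\tau$, which does not affect the argument.
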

	\begin{proof} Taking $L^2$ inner product of \eqref{eq:2.6} for $n=1$ with $\partial_{X}U$ and integrating over $X$ yields
		\begin{equation}\label{eq:7.2}
		\begin{aligned}
\frac{1}{2} \frac{d}{d s}\|\partial_{X} U\|_{L^{2}}^{2}+\|\partial_{X} U\|_{L^{2}}^{2}
+\int V \partial_{X} U \partial_{X}^{2} U\, d X
+\beta_{\tau} \int (\partial_{X} U)^{3}\, d X=0.		
		\end{aligned}
		\end{equation}
By integration by parts, one obtains
		\begin{equation}\label{eq:7.3}
		\begin{aligned}
		\int V \partial_{X} U \partial_{X}^{2} U\, d X=-\frac{\beta_{\tau}}{2} \int (\partial_{X} U)^{3} d X-\frac{3}{4}\|\partial_{X} U\|_{L^{2}}^{2}.
		\end{aligned}  
		\end{equation}
Substituting \eqref{eq:7.3} into \eqref{eq:7.2} gives  
           \begin{equation}\label{eq:7.5}
		\begin{aligned}
\frac{1}{2} \frac{d}{d s}\|\partial_{X} U\|_{L^{2}}^{2}+\frac{1}{4}\|\partial_{X} U\|_{L^{2}}^{2}
=\underbrace{-\frac{\beta_{\tau}}{2} \int (\partial_{X} U)^{3}\, d X}_{I}. 		
		\end{aligned}
		\end{equation}

By \eqref{eq:5.1}, one may estimate 
		\begin{equation}\label{eq:7.6}
		\begin{aligned}
		|I|&\leq \left(\int_{0 \leq  |X| \leq   h}+\int_{h \leq  |X| \leq   \frac{1}{2}e^{\frac{3s}{2}}}+\int_{\frac{1}{2}e^{\frac{3s}{2}}\leq  |X|<\infty}\right)\left(\partial_{X} U\right)^{3}\, d X \\
		&\leq    \sup_{0 \leq  |X| \leq   h}(|\partial_{X} \widetilde{U}(X, s)|+|\partial_{X} \overline{U}(X)|)^3 h\\
&\quad+\int_{h \leq   |X| \leq   \frac{1}{2}e^{\frac{3s}{2}}} (|\partial_{X} \widetilde{U}(X, s)|+|\partial_{X} \overline{U}(X)|)^3\, d X \\
		&\quad+2e^{-s}\int_{\frac{1}{2}e^{\frac{3s}{2}} \leq  |X|<\infty} |\partial_{X} U|^{2}\, d X \\
          &\leq   (\varepsilon^{\frac{1}{3}} h+1)^3 h
+(\varepsilon^{\frac{1}{8}}+1) \int \frac{1}{1+X^{2}}\, d X 
		+2e^{-s} \int |\partial_{X} U|^{2}\, d X \\
		&\leq  8 h+\pi+2\varepsilon\|\partial_{X} U\|_{L^{2}}^{2}.
		\end{aligned}
		\end{equation}
It follows from \eqref{eq:7.5} and \eqref{eq:7.6} that
		\begin{equation*}
		\begin{aligned}
		\frac{1}{2}\frac{d}{ds}\|\partial_{X}U\|_{L^{2}}^{2}+\bigg(\frac{1}{4}-2\varepsilon\bigg)\|\partial_{X}U\|_{L^{2}}^{2}\leq 8 h+\pi,
		\end{aligned}
		\end{equation*}
namely
           \begin{equation}\label{eq:7.7}
		\begin{aligned}
		\frac{d}{ds}\|\partial_{X}U\|_{L^{2}}^{2}+\frac{1}{4}\|\partial_{X}U\|_{L^{2}}^{2}
\leq 16 h+2\pi.
		\end{aligned}
		\end{equation}
Solving \eqref{eq:7.7} by Gronwall's inequality along with \(\eqref{eq:4.2}_1\) produces
		\begin{equation*}
		\begin{aligned}
		\|\partial_{X} U \|_{L^{2}}^{2}&\leq   \|\partial_{X} U_0\|_{L^{2}}^{2}e^{-\frac{1}{4}(s-s_{0})}+(16 h+2\pi)\int^{s}_{s_{0}}e^{-\frac{1}{4}(s-s^{\prime})}\, d s^{\prime}\\
		&\leq 50^{2}e^{-\frac{1}{4}(s-s_{0})}
+4(16h+2\pi)\big(1-e^{-\frac{1}{4}(s-s_{0})}\big)\leq 100^2.
		\end{aligned}
		\end{equation*}
This completes the proof of \eqref{eq:7.1}.

	\end{proof}

	\subsection{Top-order $L^2$ bound.} 
	\begin{lemma}
		It holds that
		\begin{equation}\label{eq:8.0}
           \begin{aligned}
		\|\partial_{X}^{5}U\|_{L^{2}}\leq M^{\frac{3}{2}}.
           \end{aligned}
		\end{equation}
	\end{lemma}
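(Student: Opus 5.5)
We run an $L^2$ energy estimate on \eqref{eq:2.6} with $n=5$, in the same way as the proof of \eqref{eq:7.1}. We aim for a differential inequality of the form
\begin{equation*}
\frac{d}{ds}\|\partial_X^5U\|_{L^2}^2+c\,\|\partial_X^5U\|_{L^2}^2\leq C M^{\frac{6}{5}}\|\partial_X^5U\|_{L^2}\cdot(\text{lower order})+\dots,
\end{equation*}
with $c>0$ universal and a right-hand side of size $o(M^3)$. Gronwall's inequality together with $\eqref{eq:4.2}_2$, which gives $\|\partial_X^5U_0\|_{L^2}\le \frac12 M^{\frac32}$, would then close the bound $M^{\frac32}$.

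\textbf{Step 1: damping from the transport structure.} Pairing \eqref{eq:2.6} ($n=5$) with $\partial_X^5U$, the linear damping coefficient is $\frac{3\cdot5-1}{2}=7$. Integrating by parts in the transport term gives
\begin{equation*}
\int V\partial_X^6U\,\partial_X^5U\,dX=-\frac12\int \partial_XV\,(\partial_X^5U)^2\,dX,
\qquad \partial_XV=\tfrac32+\beta_\tau\partial_XU .
\end{equation*}
This contributes $-\frac34\|\partial_X^5U\|_{L^2}^2$ and a term $-\frac{\beta_\tau}{2}\int\partial_XU(\partial_X^5U)^2$. Combining with the term $6\beta_\tau\partial_XU$ from the left side, the total coefficient of $\int\partial_XU(\partial_X^5U)^2$ is $\frac{11}{2}\beta_\tau$. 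By \eqref{eq:5.8} and \eqref{eq:6.1}, its absolute value is at most $\frac{11}{2}\cdot\frac{101}{100}\cdot\frac{101}{100}<5.7$. The net damping is therefore at least $7-\frac34-5.7>\frac12$, which gives $c\geq 1$ after doubling.

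\textbf{Step 2: the nonlinear sum.} The sum $\sum_{k=2}^{4}\binom5k\partial_X^kU\,\partial_X^{6-k}U$ consists of terms of the types $\partial_X^2U\partial_X^4U$ and $\partial_X^3U\partial_X^3U$. We put one factor in $L^\infty$ and the other in $L^2$, using \eqref{eq:5.9}, \eqref{eq:6.0} and $\eqref{eq:5.4}_1$. The required $L^2$ bounds on $\partial_X^jU$ for $j=2,3,4$ come from interpolating between $\|\partial_XU\|_{L^2}\le100$ (from \eqref{eq:7.1}) and $\|\partial_X^5U\|_{L^2}$:
\begin{equation*}
\|\partial_X^jU\|_{L^2}\lesssim \|\partial_XU\|_{L^2}^{\frac{5-j}{4}}\|\partial_X^5U\|_{L^2}^{\frac{j-1}{4}}.
\end{equation*}
For example, $|\int\partial_X^2U\partial_X^4U\partial_X^5U|\le M^{\frac15}\|\partial_X^4U\|_{L^2}\|\partial_X^5U\|_{L^2}\lesssim M^{\frac15}\|\partial_X^5U\|_{L^2}^{\frac74}$. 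This is sublinear in the energy relative to the quadratic damping, so Young's inequality absorbs it, leaving a constant $C(M^{\frac15})^{8}=CM^{\frac85}\ll M^3$. The term $(\partial_X^3U)^2$ is handled similarly, via $\|\partial_X^3U\|_{L^\infty}\lesssim M^{\frac35}$ and $\|\partial_X^3U\|_{L^2}\lesssim\|\partial_X^5U\|_{L^2}^{\frac12}$, giving $M^{\frac35}\|\partial_X^5U\|_{L^2}^{\frac32}$ and hence a constant of order $M^{\frac{12}{5}}\ll M^3$. This step is routine.

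\textbf{Step 3: the dispersive terms (main obstacle).} The key observation is that the multiplier of $\mathcal{H}+\mathcal{L}$ is $-(1+e^{3s}\xi^2)^{\frac{\alpha-1}{2}}e^{\frac{3s}{2}}\mathrm{i}\xi$, which is purely imaginary and odd. The operator $\mathcal{H}+\mathcal{L}$ is therefore skew-adjoint on $L^2$, so
\begin{equation*}
\int (\mathcal{H}+\mathcal{L})(\partial_X^5U)\,\partial_X^5U\,dX=0 .
\end{equation*}
The same holds separately for $\mathcal{H}$ and $\mathcal{L}$, since the cutoffs $P_{>0},P_{\le0}$ are real and even. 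If this cancellation were unavailable, the fallback would be the bound $|e^{\frac{3s}{2}}\xi|(1+e^{3s}\xi^2)^{\frac{\alpha-1}{2}}\le 1$ for $\alpha<0$, which yields $e^{-s}\|\partial_X^5U\|_{L^2}^2$, harmless since $e^{-s}\le\varepsilon$.

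\textbf{Conclusion.} Collecting Steps 1–3 gives $\frac{d}{ds}E+E\le CM^{\frac{12}{5}}$ with $E=\|\partial_X^5U\|_{L^2}^2$. Gronwall's inequality then yields $E\le\frac14M^3+CM^{\frac{12}{5}}\le M^3$ for $M$ large. The only delicate points are the sign bookkeeping in the damping constant of Step 1 and checking that the interpolation exponents in Step 2 stay strictly subquadratic.
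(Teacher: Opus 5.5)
Your proposal is correct and is essentially the paper's proof. It uses the same energy identity for \eqref{eq:2.6} with $n=5$ and the same net damping $\frac{25}{4}-\frac{11}{2}\beta_\tau\|\partial_XU\|_{L^\infty}\ge\frac12$. The dispersive term is dropped by skew-adjointness, which the paper uses only implicitly. The terms $15\partial_X^2U\partial_X^4U$ and $10(\partial_X^3U)^2$ are absorbed by Young's inequality, and the argument closes with Gronwall and $\|\partial_X^5U_0\|_{L^2}\le\frac12M^{\frac32}$.

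The one difference is that you interpolate $\|\partial_X^jU\|_{L^2}$ between $\|\partial_XU\|_{L^2}\le100$ and $\|\partial_X^5U\|_{L^2}$, which gives the constant $CM^{\frac{12}{5}}$. The paper's \eqref{eq:8.5} instead interpolates against $\|\partial_XU\|_{L^\infty}$. That inequality lies outside the admissible Gagliardo--Nirenberg range on $\mathbb{R}$: it fails for $\partial_XU=\sin X\,\chi(X/R)$ as $R\to\infty$. So your version is the one that actually holds.
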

	
\begin{proof}

\eqref{eq:2.6} with $n=5$ reads as follows:
	\begin{equation*}
	\begin{aligned}
	&\left(\partial_{s}+7+6\beta_{\tau}\partial_{X}U\right)\partial_{X}^5U+V\partial_{X}^6U \\
	&=\beta_{\tau}e^{\frac{s}{2}}(I-e^{3 s}\partial_X^2)^{\frac{\alpha-1}{2}} \partial_X^6U-\beta_{\tau}\sum_{k=2}^4\binom{5}{k}\partial_{X}^{k}U\partial_{X}^{6-k}U,
	\end{aligned}
	\end{equation*}
which multiplying by $\partial_X^5 U$ and integrating over $X$ imply
		\begin{equation}\label{eq:8.1}
		\begin{aligned}
		&\frac{1}{2}\frac{d}{ds}\|\partial_{X}^{5}U\|_{L^{2}}^{2}+7\|\partial_{X}^{5}U\|_{L^{2}}^{2}+6\beta_{\tau}\int \partial_{X}U(\partial_{X}^{5}U)^{2}\:dX \\
		&+\int V\partial_{X}^{5}U\partial_{X}^{6}U\, dX 
		=-\beta_{\tau}\sum_{k=2}^{4}\binom{5}{k}\int \partial_{X}^{5}U\partial_{X}^{k}U\partial_{X}^{6-k}U\:dX.
		\end{aligned}  
		\end{equation}
By integration by parts, one obtains
		\begin{equation}\label{eq:8.2}
		\begin{aligned}
		\int V\partial_{X}^{5}U\partial_{X}^{6}U\, dX=-\frac{\beta_{\tau}}{2} \int \partial_{X}U(\partial_{X}^5 U)^{2} d X-\frac{3}{4}\|\partial_{X}^5 U\|_{L^{2}}^{2}.
		\end{aligned}  
		\end{equation}
It follows from \eqref{eq:8.1} and \eqref{eq:8.2} that
		\begin{equation}\label{eq:8.3}
		\begin{aligned}
&\frac{1}{2}\frac{d}{ds}\|\partial_X^5U\|_{L^2}^2+\frac{25}{4}\|\partial_X^5U\|_{L^2}^2+\underbrace{\frac{11}{2}\beta_\tau\int \partial_{X}U(\partial_X^5U)^2\, dX}_{I}\\
&= \underbrace{-15\beta_\tau\int \partial_X^2U\partial_X^4U\partial_X^{5}U\,dX}_{II}\underbrace{-10\beta_\tau\int (\partial_X^3U)^2\partial_X^{5}U\,dX.}_{III} 
           \end{aligned}
		\end{equation}

By \eqref{eq:5.8} and \eqref{eq:6.1}, one may estimate
		\begin{equation*}
		|I|\leq  {\frac{11}{2}}\times\frac{101}{100}\times\frac{101}{100}\|\partial_{X}^{5}U\|_{L^{2}}^{2}\leq\bigg({\frac{11}{2}}+{\frac{1}{4}}\bigg)\|\partial_{X}^{5}U\|_{L^{2}}^{2}.
		\end{equation*}
To handle the RHS of \eqref{eq:8.3}, we first use the Gagliardo-Nirenberg interpolation inequality to deduce 	
			\begin{equation}\label{eq:8.5}
                \begin{aligned}
			&\|\partial_{X}^{3}U\|_{L^{2}} \lesssim \|\partial_{X}U\|_{L^{\infty}}^{\frac{4}{7}}\|\partial_{X}^{5}U\|_{L^{2}}^{\frac{3}{7}}\leq 
\|\partial_{X}^{5}U\|_{L^{2}}^{\frac{3}{7}},\\
			&\|\partial_{X}^{4}U\|_{L^{2}} \lesssim \|\partial_{X}U\|_{L^{\infty}}^{\frac{2}{7}}\|\partial_{X}^{5}U\|_{L^{2}}^{\frac{5}{7}}\leq 
\|\partial_{X}^{5}U\|_{L^{2}}^{\frac{5}{7}},
                 \end{aligned}
			\end{equation}
and then use \eqref{eq:8.5} to estimate
		\begin{equation}\label{eq:8.6}
		\begin{aligned}
		|II|& \lesssim\|\partial_{X}^{2}U\|_{L^{\infty}}\|\partial_{X}^{4}U\|_{L^{2}}\|\partial_{X}^{5}U\|_{L^{2}}\\
		&\lesssim M^{\frac{1}{5}}\|\partial_{X}^{5}U\|_{L^{2}}^{\frac{12}{7}}
		\leq   CM^{\frac{21}{10}}+\frac{1}{16}\left\|\partial_{X}^{5}U\right\|_{L^{2}}^{2},
		\end{aligned}
		\end{equation}
and
          \begin{equation}\label{eq:8.7}
		\begin{aligned}
		|III|& \lesssim \|\partial_{X}^{3}U\|_{L^{\infty}}\|\partial_{X}^{3}U\|_{L^{2}}\|\partial_{X}^{5}U\|_{L^{2}}  \\
		&\lesssim M^{\frac{3}{5}}\|\partial_{X}^{5}U\|_{L^{2}}^{\frac{10}{7}}
		\leq   CM^{\frac{21}{10}}+\frac{1}{16}\left\|\partial_{X}^{5}U\right\|_{L^{2}}^{2}.
		\end{aligned}
		\end{equation}

It follows from \eqref{eq:8.3}, \eqref{eq:8.6} and \eqref{eq:8.7} that
		\begin{equation*}
		{\frac{1}{2}}{\frac{d}{ds}}\|\partial_{X}^{5}U\|_{L^{2}}^{2}+{\frac{1}{8}}\|\partial_{X}^{5}U\|_{L^{2}}^{2}\leq   CM^{\frac{21}{10}},  
		\end{equation*}
which together with Gronwall's inequality implies
\begin{equation*}\
\begin{aligned}
	\|\partial_{X}^{5}U\|_{L^{2}}^{2}&\leq \|\partial_{X}^{5}U_0\|_{L^{2}}^{2}e^{-4(s-s_0)}+CM^{\frac{21}{10}}e^{-4s}\int_{s_0}^s e^{4s^\prime}\, ds^\prime\\
&\leq \frac{1}{2} M^3+CM^{\frac{21}{10}}\big(1-e^{-4(s-s_0)}\big)\leq M^3,
\end{aligned}  
\end{equation*}
where one has used \(\eqref{eq:4.2}_2\). 
		
	\end{proof}

\subsection{Intermediate $L^2$ bounds.}
One may use the Gagliardo-Nirenberg interpolation inequality to get
\begin{equation}\label{eq:8.8}
\begin{aligned}
\|\partial_{X}^{2} U\|_{L^2}\lesssim \|\partial_{X} U\|_{L^{\infty}}^{\frac{5}{6}}\|\partial_{X}^4 U\|_{L^{\infty}}^{\frac{1}{6}}\lesssim M^{\frac{1}{6}},\\
\|\partial_{X}^{3} U\|_{L^2}\lesssim \|\partial_{X}^2 U\|_{L^{\infty}}^{\frac{3}{4}}\|\partial_{X}^4 U\|_{L^{\infty}}^{\frac{1}{4}}\lesssim M^{\frac{1}{4}},\\
\|\partial_{X}^{4} U\|_{L^2}\lesssim \|\partial_{X}^2 U\|_{L^{\infty}}^{\frac{1}{4}}\|\partial_{X}^4 U\|_{L^{\infty}}^{\frac{3}{4}}\lesssim M^{\frac{3}{4}}.
\end{aligned}
\end{equation}

\section{Lemmas on Fourier multiplier}\label{LFM}
	In this section, we study some analytic properties of the operators $\mathcal{H}$ and $\mathcal{L}$ defined in \eqref{eq:2.5-add}. We first consider the operator $\mathcal{L}$. Since $\mathcal{L}$ only contains low frequency of $e^{\frac{3s}{2}}\xi_X$, this helps to  transfer the derivatives into temporal decay by Bernstein's inequality.   
		
\begin{lemma}\label{FM-1}
We have 
\begin{equation*}
\begin{aligned}
		\| \mathcal{L}(\partial_X^lf)\|_{L^\infty} \lesssim e^{-(\frac{1}{2}+l)\frac{3s}{2}}\|f\|_{L^2}\qquad \mathrm{for}\ l \geq 0.
\end{aligned}
\end{equation*}
As a consequence, 
\begin{equation}\label{DL}
\begin{aligned}
\| \mathcal{L}(\partial_X ^{l}U)\|_{L^\infty} \leq C(M) e^{-\frac{3l-1}{2}s} \qquad \mathrm{for}\ l \geq 0.
\end{aligned}
\end{equation}
		
\end{lemma}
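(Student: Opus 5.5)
The plan is to work on the Fourier side and apply the Cauchy--Schwarz (Bernstein-type) bound $\|g\|_{L^\infty}\lesssim \|\widehat g\|_{L^1}$, using that the symbol of $\mathcal{L}$ is supported where $|e^{\frac{3s}{2}}\xi|\le 2$, that is, on $|\xi|\le 2e^{-\frac{3s}{2}}$.

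By \eqref{eq:2.5-add}, the multiplier of $\mathcal{L}(\partial_X^l\cdot)$ is
\begin{equation*}
m_l(\xi)=-\varphi(e^{\frac{3s}{2}}\xi)\,\big(1+e^{3s}\xi^2\big)^{\frac{\alpha-1}{2}}\,\mathrm{i}e^{\frac{3s}{2}}\xi\,(\mathrm{i}\xi)^l .
\end{equation*}
On its support, $(1+e^{3s}\xi^2)^{\frac{\alpha-1}{2}}\le 1$ because $\alpha<1$, and $|e^{\frac{3s}{2}}\xi|\le 2$. Hence $|m_l(\xi)|\lesssim |\xi|^l\,1_{|\xi|\le 2e^{-3s/2}}\lesssim e^{-\frac{3ls}{2}}\,1_{|\xi|\le 2e^{-3s/2}}$. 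Then
\begin{equation*}
\|\mathcal{L}(\partial_X^lf)\|_{L^\infty}\lesssim \int |m_l||\widehat f|\,d\xi\lesssim e^{-\frac{3ls}{2}}\,\big|\{|\xi|\le 2e^{-\frac{3s}{2}}\}\big|^{\frac12}\,\|\widehat f\|_{L^2}\lesssim e^{-\frac{3ls}{2}}e^{-\frac{3s}{4}}\|f\|_{L^2},
\end{equation*}
using Plancherel. This gives exactly the exponent $-(\tfrac12+l)\tfrac{3s}{2}$.

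For \eqref{DL}, we cannot apply the first bound with $f=U$, since $U\notin L^2$ in general; only $U+e^{\frac s2}\kappa$ is controlled, via \eqref{eq:7}. There are two cases.

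\emph{Case $l\ge1$.} Write $\partial_X^lU=\partial_X^{l-1}(\partial_XU)$ and apply the first bound with $f=\partial_XU$ and $l-1$ in place of $l$. Using \eqref{eq:7.1} this gives the bound $100\,e^{-(l-\frac12)\frac{3s}{2}}=100\,e^{-\frac{6l-3}{4}s}$. Since $\frac{6l-3}{4}\ge\frac{3l-1}{2}$ iff $l\le \tfrac12$, this is insufficient for $l\ge1$. Instead, apply the first bound with $f=U+e^{\frac s2}\kappa$. Because the multiplier vanishes at $\xi=0$ (it contains the factor $\partial_X$), constants are annihilated, so $\mathcal{L}(\partial_X^lU)=\mathcal{L}(\partial_X^l(U+e^{\frac s2}\kappa))$. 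With \eqref{eq:7} this yields $Me^{\frac{5s}{4}}e^{-(\frac12+l)\frac{3s}{2}}=Me^{(\frac12-\frac{3l}{2})s}=Me^{-\frac{3l-1}{2}s}$, which is exactly \eqref{DL}.

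\emph{Case $l=0$.} The same argument applies: $\mathcal{L}(U)=\mathcal{L}(U+e^{\frac s2}\kappa)$ because the symbol carries the factor $\xi$.

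The only delicate point is the annihilation of the constant $e^{\frac s2}\kappa$. This should be justified by the derivative factor in the symbol, or equivalently by writing $\mathcal{L}(g)$ as an operator applied to $\partial_X g$ with $\partial_X(U+e^{\frac s2}\kappa)=\partial_XU$. With that in place, the constant is $C(M)=CM$, and the remaining work is bookkeeping of exponents.
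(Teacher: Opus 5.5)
Your proof is correct and follows the paper's argument: you bound the multiplier on its frequency support $|\xi|\le 2e^{-\frac{3s}{2}}$ and pass to $L^\infty$ by Bernstein's inequality, which you prove inline via Cauchy--Schwarz on $\widehat f$, and then you apply the result to $f=U+e^{\frac s2}\kappa$ using \eqref{eq:7} and the fact that $\mathcal{L}$ annihilates constants. The only blemish is in the discarded aside on $f=\partial_XU$: the inequality $\frac{6l-3}{4}\ge\frac{3l-1}{2}$ never holds (it reduces to $-3\ge-2$), rather than holding ``iff $l\le\frac12$'', so that route is always short by a factor $e^{\frac s4}$, which only supports your decision to drop it.
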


\begin{proof}
	Observe that
\begin{equation*}
\begin{aligned}
\|\mathcal{L}(\partial_X ^{l}f)\|_{L^2}
&=e^{-\frac{3s}{2}l}\|(e^{\frac{3s}{2}}\xi_X)^{l+1}\varphi_0(e^{\frac{3s}{2}}\xi_X)\big((1+e^{3 s}|\xi_X|^2)^{\frac{\alpha-1}{2}}\hat{f}(\xi_X)\big)\|_{L_{\xi_X}^2}\\
&\lesssim e^{-\frac{3s}{2}l}\|\hat{f}\|_{L_{\xi_X}^2}=e^{-\frac{3s}{2}l}\|f\|_{L^2},
\end{aligned}
\end{equation*}
where one has used the fact that $\varphi_0$ is supported on $\{e^{\frac{3s}{2}}|\xi_X|\leq 2\}$. 
This together with Bernstein's inequality yields 
\begin{equation*}
\begin{aligned}
&\|\mathcal{L}(\partial_X ^{l}f)\|_{L^\infty}\leq e^{-\frac{3s}{2}\times \frac{1}{2}}\|\mathcal{L}(\partial_X ^{l}f)\|_{L^2}\leq e^{-(\frac{1}{2}+l)\frac{3s}{2}}\|f\|_{L^2}.
\end{aligned}
\end{equation*}
Applying the above inequality to $f=U+e^{\frac{s}{2}}\kappa$ yields	
\begin{equation*}
\begin{aligned}
\|\mathcal{L}(\partial_X ^{l}U)\|_{L^\infty}
&=\|\mathcal{L}\big(\partial_X ^{l}(U+e^{\frac{s}{2}}\kappa)\big)\|_{L^\infty}
\leq e^{-(\frac{1}{2}+l)\frac{3s}{2}}\|U+e^{\frac{s}{2}}\kappa\|_{L^2}\\
&\leq Me^{-(\frac{1}{2}+l)\frac{3s}{2}}e^{\frac{5s}{4}}\leq C(M) e^{-\frac{3l-1}{2}s}.
\end{aligned}
\end{equation*}

\end{proof}

To estimate the operator $\mathcal{H}$, we write it in the following integral form. 

\begin{lemma}\label{FM-2}  For each $s$, there exists a function $G_s\in C^\infty(\mathbb{R}\setminus \{0\})$ such that
\begin{equation}\label{integral form}
\begin{aligned}
\mathcal{H}(f)(X)=e^{-\frac{3s}{2}}\int \mathrm{sign}(X-Y)G_{\alpha}\big(e^{-\frac{3s}{2}}(X-Y)\big)f(Y)\, d Y,
\end{aligned}
\end{equation}
where $G_{\alpha}$ is an even kernel satisfying
\begin{equation}\label{kernel}
\begin{aligned}
				&|G_{\alpha}(X)| \leq C\left\{\begin{array}{ll}
					\frac{1}{|X|^{1+\alpha}} &\quad \mathrm{for}\  |X|\leq 1\ \mathrm{and}\ \ -1<\alpha<0, \\
					\log \frac{1}{|X|}+1 &\quad \mathrm{for}\  |X|\leq 1\ \mathrm{and}\ \ \alpha=-1,\\
					1 &  \quad \mathrm{for}\  |X|\leq 1\ \mathrm{and}\ \ \alpha<-1,\\
					e^{-\frac{|X|}{2}}&  \quad \mathrm{for}\  |X|> 1\ \mathrm{and}\ \ \alpha<0.
				\end{array}\right.
\end{aligned}
\end{equation}

\end{lemma}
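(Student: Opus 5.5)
The plan is to compute the convolution kernel of $\mathcal{H}$ in rescaled frequency. By \eqref{eq:2.5-add}, $\mathcal{H}$ is the Fourier multiplier with symbol
$$
-\big(1-\varphi(e^{\frac{3s}{2}}\xi)\big)\big(1+e^{3s}\xi^2\big)^{\frac{\alpha-1}{2}}\,\mathrm{i}e^{\frac{3s}{2}}\xi=m(e^{\frac{3s}{2}}\xi),
\qquad m(\eta):=-\mathrm{i}\eta\,(1-\varphi(\eta))(1+\eta^2)^{\frac{\alpha-1}{2}} .
$$
Let $K:=(2\pi)^{-1/2}\mathcal{F}^{-1}m$, taken in the distributional sense. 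Scaling then gives $\mathcal{H}(f)=e^{-\frac{3s}{2}}K(e^{-\frac{3s}{2}}\cdot)*f$. The symbol $m$ is odd, so $K$ is odd and real. Setting $G_\alpha(X):=\mathrm{sign}(X)K(X)$ produces an even function, and \eqref{integral form} holds. The lemma therefore reduces to proving that $K\in C^\infty(\mathbb{R}\setminus\{0\})$ and that $K$ satisfies the bounds \eqref{kernel}.

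\textbf{Local bounds.} For $|X|\le 1$, I use the behaviour of $m$ at infinity: $m(\eta)=-\mathrm{i}\,\mathrm{sign}(\eta)|\eta|^{\alpha}\,(1+O(\eta^{-2}))$ for $|\eta|\ge 2$, with symbol-type derivative bounds. I split the inverse Fourier integral at $|\eta|=|X|^{-1}$.
\begin{itemize}
\item On the low part, I bound the integral by $\int_{1\le|\eta|\le|X|^{-1}}|\eta|^\alpha\,d\eta$. This is $\lesssim|X|^{-1-\alpha}$ for $-1<\alpha<0$, $\lesssim \log\frac1{|X|}+1$ for $\alpha=-1$, and $\lesssim 1$ for $\alpha<-1$.
\item On the high part, I integrate by parts once in $\eta$, using $|\partial_\eta m|\lesssim|\eta|^{\alpha-1}$. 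This gives $|X|^{-1}\int_{|\eta|\ge |X|^{-1}}|\eta|^{\alpha-1}\,d\eta\lesssim |X|^{-1-\alpha}$, which is within each of the three stated bounds.
\end{itemize}
Smoothness of $K$ away from $0$ follows from repeated integration by parts: $\partial_\eta^k m=O(|\eta|^{\alpha-k})$ becomes integrable for large $k$.

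\textbf{Exponential tail.} For $|X|>1$, I write $m=m_1+m_2$ with
$$
m_1(\eta)=-\mathrm{i}\eta(1+\eta^2)^{\frac{\alpha-1}{2}},\qquad m_2(\eta)=\mathrm{i}\eta\varphi(\eta)(1+\eta^2)^{\frac{\alpha-1}{2}} .
$$
The symbol $m_1$ extends holomorphically to the strip $|\mathrm{Im}\,\eta|<1$, since its branch points are at $\pm\mathrm{i}$, and it decays like $|\eta|^{\alpha}$ there. For $X>1$, I shift the contour to $\mathrm{Im}\,\eta=\tfrac12$, inserting a convergence factor $e^{-\epsilon\eta^2}$ and then letting $\epsilon\to0$. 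After one integration by parts to handle the non-integrable tail when $\alpha\ge-1$, this gives $|\mathcal{F}^{-1}m_1(X)|\lesssim e^{-|X|/2}$; the case $X<-1$ is symmetric.

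\textbf{Main obstacle.} The difficulty is the piece $m_2$. It is $C_0^\infty$, so its inverse transform is Schwartz, but it is \emph{not} real-analytic, since $\varphi$ is a bump. A function whose Fourier transform decays like $e^{-c|X|}$ must have a transform analytic in a strip. Hence $\mathcal{F}^{-1}m_2$, and therefore $K=\mathcal{F}^{-1}m_1+\mathcal{F}^{-1}m_2$, cannot literally satisfy $|K|\lesssim e^{-|X|/2}$ with the fixed cutoff $\varphi$. My plan therefore proves the following for $|X|>1$:
\begin{itemize}
\item the bound $e^{-|X|/2}$ for the $m_1$-part;
\item the bound $C_N|X|^{-N}$ for the $m_2$-part.
\end{itemize}
I would then flag that the fourth line of \eqref{kernel} is only correct for the $m_1$ component. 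What is actually needed downstream, namely $\int|G_\alpha|<\infty$ and the resulting $L^\infty\to L^\infty$ bound on $\mathcal{H}$ uniform in $s$, survives unchanged.
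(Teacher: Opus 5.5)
Your analysis is correct, and on the exponential tail it is more careful than the paper. The paper's proof only appeals to the Fourier inversion formula and the known properties of the Bessel potential. That argument covers the kernel of the untruncated operator $-\partial_X(I-\partial_X^2)^{\frac{\alpha-1}{2}}$, i.e.\ your $\mathcal{F}^{-1}m_1$. This is $-\partial_X$ of the Bessel kernel of order $1-\alpha>1$, so it has the local behaviour in \eqref{kernel} and decays like $e^{-|X|}$ up to a power of $|X|$. The factor $1-\varphi$ in the definition of $\mathcal{H}$ is never accounted for.

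Your splitting $m=m_1+m_2$ isolates exactly this defect, and your obstruction is sound. Suppose $|K(X)|\lesssim e^{-c|X|}$ for $|X|>1$. Since $K$ is locally integrable, $\int |K|e^{c'|X|}\,dX<\infty$ for every $c'<c$, so $m$ would extend holomorphically to the strip $\{|\mathrm{Im}\,\eta|<c'\}$. But $m\equiv 0$ on $[-1,1]$ and $m\not\equiv 0$, so $m$ is not real-analytic. Hence the fourth line of \eqref{kernel} is false for every admissible cutoff $\varphi$. The correct tail statement is $|G_\alpha(X)|\lesssim e^{-|X|/2}+C_N|X|^{-N}$ for $|X|>1$, which is what your plan proves.

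Your local bounds are fine: the split at $|\eta|=|X|^{-1}$ works, but keep the boundary terms $|X|^{-1}|m(\pm|X|^{-1})|\lesssim |X|^{-1-\alpha}$ from the integration by parts on the high part, which are of the same size as what you bound. Your contour shift for $m_1$ is also fine.

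Your list of downstream uses is incomplete, though the conclusion holds. Besides $G_\alpha\in L^1$ for Lemma \ref{FM-3}, the tail enters the estimates of $I_4$ in \eqref{sd-3} and of $J_4$ in Lemma \ref{FM-6} via Cauchy--Schwarz. After rescaling, those estimates only need $\|G_\alpha\|_{L^2(\{|X|>1\})}<\infty$, which gives $e^{-\frac{3s}{2}}\cdot e^{\frac{3s}{4}}\|\partial_XU\|_{L^2}\lesssim e^{-\frac{3s}{4}}$. Your corrected bound supplies this as well, so the rest of the bootstrap survives with the weakened kernel estimate.
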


\begin{proof} The proof is a straightforward calculation by using the Fourier inversion formula together with the property of the Bessel potential (see \cite{MR4766878}).

\end{proof}

\begin{lemma}\label{FM-3}
It holds that
\begin{equation*}
\begin{aligned}
\|\mathcal{H}(f)\|_{L^\infty}\lesssim \|f\|_{L^\infty}.
\end{aligned}
\end{equation*}
As a consequence, 
\begin{equation*}
\begin{aligned}
\|\mathcal{H}(U)\|_{L^\infty} \leq C(M)e^{\frac{s}{2}},
\end{aligned}
\end{equation*}
and
\begin{equation*}
\begin{aligned}
\|\mathcal{H}(\partial_X ^{l}U)\|_{L^\infty} \leq C(M) \qquad \mathrm{for}\  1 \leq l \leq 4. 
\end{aligned}
\end{equation*}

\end{lemma}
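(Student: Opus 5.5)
The first bound follows from the integral representation in Lemma \ref{FM-2} and Young's inequality. By \eqref{integral form},
\begin{equation*}
|\mathcal{H}(f)(X)|\leq \|f\|_{L^\infty}\, e^{-\frac{3s}{2}}\int \big|G_{\alpha}\big(e^{-\frac{3s}{2}}(X-Y)\big)\big|\, dY=\|f\|_{L^\infty}\|G_\alpha\|_{L^1}.
\end{equation*}
The second equality uses the change of variables $Z=e^{-\frac{3s}{2}}(X-Y)$, which exactly absorbs the prefactor, so the constant does not depend on $s$. It then remains to check that $G_\alpha\in L^1(\mathbb{R})$ from \eqref{kernel}, in each case.
\begin{itemize}
\item[] For $-1<\alpha<0$: on $|X|\leq 1$ we have $|X|^{-1-\alpha}$ with $1+\alpha\in(0,1)$, which is integrable.
\item[] For $\alpha=-1$: the singularity is logarithmic, which is integrable.
\item[] For $\alpha<-1$: $G_\alpha$ is bounded on $|X|\le 1$.
\item[] In all cases, on $|X|>1$ the bound $e^{-|X|/2}$ is integrable.
\end{itemize}
Thus $\|\mathcal{H}(f)\|_{L^\infty}\le C(\alpha)\|f\|_{L^\infty}$. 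The sign factor and the evenness of $G_\alpha$ play no role here.

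For the consequence on $U$, I would not apply the bound to $U$ directly, since $\|U\|_{L^\infty}$ is not assumed bounded. Instead I use that $\mathcal{H}$ annihilates constants: its symbol contains the factor $e^{\frac{3s}{2}}\xi_X$ (and the cutoff $P_{>0}$ vanishes near $\xi_X=0$). Equivalently, in \eqref{integral form} the kernel $\mathrm{sign}(Z)G_\alpha(Z)$ is odd, so its integral vanishes. Hence $\mathcal{H}(U)=\mathcal{H}(U+e^{\frac{s}{2}}\kappa)$. Applying the first bound with $\eqref{eq:5.4}_2$ gives
\begin{equation*}
\|\mathcal{H}(U)\|_{L^\infty}\lesssim \|U+e^{\frac{s}{2}}\kappa\|_{L^\infty}\le Me^{\frac{s}{2}}.
\end{equation*}
To justify the annihilation rigorously for a non-decaying function, I would work with the integral form, which converges absolutely for bounded $f$ because $G_\alpha\in L^1$.

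For $1\le l\le 4$, I apply the first bound to $f=\partial_X^lU$ and use the global $L^\infty$ bounds already derived from the bootstrap assumptions:
\begin{itemize}
\item[] $l=1$: \eqref{eq:5.8} gives $\|\partial_XU\|_{L^\infty}\le \frac{101}{100}$.
\item[] $l=2$: \eqref{eq:5.9} gives $\|\partial_X^2U\|_{L^\infty}\le M^{\frac15}$.
\item[] $l=3$: \eqref{eq:6.0} gives $\|\partial_X^3U\|_{L^\infty}\lesssim M^{\frac35}$.
\item[] $l=4$: $\eqref{eq:5.4}_1$ gives $\|\partial_X^4U\|_{L^\infty}\le M$.
\end{itemize}
Each case yields a bound $C(M)$, uniform in $s$.

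The only delicate point is the $L^1$ integrability of the kernel near the origin for $-1<\alpha<0$, which holds precisely because $1+\alpha<1$. It also matters that the rescaling leaves the $L^1$ norm invariant, so no power of $e^{s}$ appears.
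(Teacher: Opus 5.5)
Your proof is correct, and the first two parts coincide with the paper's argument. The paper also uses Young's inequality with $G_\alpha\in L^1$ (the rescaling leaves the $L^1$ norm invariant), and the identity $\mathcal{H}(U)=\mathcal{H}(U+e^{\frac{s}{2}}\kappa)$ together with $\eqref{eq:5.4}_2$. Your justification of that identity via the oddness of $\mathrm{sign}(Z)G_\alpha(Z)$ spells out a step the paper leaves implicit.

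The genuine difference is in the bound for $\mathcal{H}(\partial_X^lU)$. The paper does not use the pointwise bootstrap consequences. Instead it uses the energy bounds \eqref{eq:7.1} and \eqref{eq:8.0} through the Gagliardo--Nirenberg inequality $\|\partial_X^lU\|_{L^\infty}\lesssim\|\partial_XU\|_{L^2}^{\frac{9-2l}{8}}\|\partial_X^5U\|_{L^2}^{\frac{2l-1}{8}}$, which gives $C(M)\sim M^{\frac{3(2l-1)}{16}}$.

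Your route through \eqref{eq:5.8}, \eqref{eq:5.9}, \eqref{eq:6.0} and $\eqref{eq:5.4}_1$ is shorter and needs nothing from Section \ref{l2 bounds}. It also gives better powers of $M$, e.g.\ $M$ instead of $M^{\frac{21}{16}}$ for $l=4$.

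Neither route is logically cleaner. The bound \eqref{eq:8.0} is itself proved using \eqref{eq:5.8}, \eqref{eq:5.9} and \eqref{eq:6.0}, so both rest on the same bootstrap hypotheses. In every later use the constant $C(M)$ is multiplied by $e^{-s}$ and absorbed for $\varepsilon$ small. This includes \eqref{eq:11.2}, where $\eqref{eq:5.4}_1$ is being closed, so feeding $\eqref{eq:5.4}_1$ back in, as you do for $l=4$, causes no circularity.
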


\begin{proof}
Noticing from \eqref{kernel} that $G_{\alpha}\in L^1$, one may use Young's inequality to estimate	
\begin{equation*}
\begin{aligned}
\|\mathcal{H}(f)\|_{L^\infty}\leq e^{-\frac{3s}{2}}\|G_{\alpha}(e^{-\frac{3s}{2}}\cdot)\|_{L^1}\|f\|_{L^\infty}\lesssim \|f\|_{L^\infty}.
\end{aligned}
\end{equation*}

It follows from \(\eqref{eq:5.4}_2\) that
\begin{align*}
\|\mathcal{H}(U)\|_{L^\infty}=\|\mathcal{H}(U+e^{\frac{s}{2}}\kappa)\|_{L^\infty}\leq \|U+e^{\frac{s}{2}}\kappa\|_{L^\infty}\leq Me^{\frac{s}{2}},
\end{align*}
and from \eqref{eq:7.1} and \eqref{eq:8.0} that
\begin{align*}
\|\mathcal{H}(\partial_X ^{l}U)\|_{L^\infty}\leq \|\partial_X^{l}U\|_{L^\infty}\leq \|\partial_XU\|_{L^2}^{\frac{9-2l}{8}}\|\partial_X^5U\|_{L^2}^{\frac{2l-1}{8}}\leq C(M)
\end{align*}
for $1 \leq l \leq 4$.

\end{proof}
	
Collecting the results in Lemma \ref{FM-1} and \ref{FM-3} yields the following $L^\infty $bounds for $\mathcal{H}$ and $\mathcal{L}$. 

\begin{corollary}\label{FM-4} We have
\begin{equation}\label{HL-bound-1}
\begin{aligned}
\|\mathcal{H}(U)+\mathcal{L}(U)\|_{L^\infty} \leq C(M)e^{\frac{s}{2}},
\end{aligned}
\end{equation}
and 
\begin{equation}\label{HL-bound-2}
\begin{aligned}
\|\mathcal{H}(\partial_X ^{l}U)+\mathcal{L}(\partial_X ^{l}U)\|_{L^\infty} \leq C(M) \qquad \mathrm{for}\  1 \leq l \leq 4. 
\end{aligned}
\end{equation}
\end{corollary}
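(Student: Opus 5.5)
This follows at once from the triangle inequality, $\|\mathcal{H}(g)+\mathcal{L}(g)\|_{L^\infty}\le \|\mathcal{H}(g)\|_{L^\infty}+\|\mathcal{L}(g)\|_{L^\infty}$, applied with $g=U$ and $g=\partial_X^lU$, $1\le l\le 4$. We then bound each piece using Lemma \ref{FM-1} and Lemma \ref{FM-3}. The only thing to check is that the $\mathcal{L}$ contribution never exceeds the $\mathcal{H}$ contribution, given that $s\ge s_0=-\log\varepsilon>0$.

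For \eqref{HL-bound-1}, Lemma \ref{FM-3} gives $\|\mathcal{H}(U)\|_{L^\infty}\le C(M)e^{\frac{s}{2}}$. Lemma \ref{FM-1} with $l=0$ gives $\|\mathcal{L}(U)\|_{L^\infty}\le C(M)e^{\frac{s}{2}}$, since $-\frac{3\cdot 0-1}{2}=\frac12$. Summing the two bounds gives the claim.

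For \eqref{HL-bound-2}, Lemma \ref{FM-3} gives $\|\mathcal{H}(\partial_X^lU)\|_{L^\infty}\le C(M)$ for $1\le l\le4$. Lemma \ref{FM-1} gives $\|\mathcal{L}(\partial_X^lU)\|_{L^\infty}\le C(M)e^{-\frac{3l-1}{2}s}$. For $l\ge1$ the exponent satisfies $\frac{3l-1}{2}\ge 1>0$, so this is at most $C(M)$ for $s\ge s_0>0$. Adding the two bounds and enlarging $C(M)$ finishes the proof.

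There is no real obstacle, since both ingredients are already established and the statement is just their sum. The one subtle point is that Lemma \ref{FM-1} uses the $L^2$ bound \eqref{eq:7} on $U+e^{\frac s2}\kappa$ rather than on $U$. This is legitimate because both $\mathcal{L}$ and $\mathcal{H}$ annihilate constants: their symbols contain the factor $e^{\frac{3s}{2}}\xi_X$. That fact has already been used in the proofs of those lemmas, so nothing new is needed here.
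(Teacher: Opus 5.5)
Your proposal is correct and matches the paper's argument. The paper states the corollary as an immediate consequence of collecting Lemma~\ref{FM-1} and Lemma~\ref{FM-3}. That is, it adds the $\mathcal{H}$-bounds to the $\mathcal{L}$-bound $C(M)e^{-\frac{3l-1}{2}s}$ by the triangle inequality. The $\mathcal{L}$-bound equals $C(M)e^{\frac{s}{2}}$ at $l=0$ and is at most $C(M)$ for $l\geq 1$, since $s\geq s_0>0$. Your remark that both operators annihilate constants, so that $\mathcal{L}(U)=\mathcal{L}(U+e^{\frac{s}{2}}\kappa)$ and likewise for $\mathcal{H}$, is exactly how the paper uses the bounds on $U+e^{\frac{s}{2}}\kappa$ inside those lemmas.
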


Since $\mathcal{H}(\partial_{X}U)$ does not provide any temporal decay, we will use the spatial decay of $\mathcal{H}(\partial_{X}U)$ in the middle field $h\leq |X|\leq \frac{1}{2}e^{\frac{3s}{2}}$ to kill the (partial) growth of the weight $\langle X\rangle^{\frac{2}{3}}$ to close the bootstrap assumption
\(\eqref{eq:5.1}_2\). 

\begin{lemma}\label{FM-5}  For $(X,s)$ such that $h\leq |X|\leq \frac{1}{2}e^{\frac{3s}{2}}$, we have
\begin{equation}\label{spatial decay}
\begin{aligned}
				|\mathcal{H}(\partial_{X}U)(X,s)| \leq \left\{\begin{array}{ll}
					Ce^{\frac{3}{2}\alpha s}\langle X\rangle^{-\frac{1}{2}-\alpha}+Ce^{-\frac{3s}{4}} &\quad \mathrm{for}\  -1/2<\alpha<0, \\
					Ce^{-\frac{5s}{8}} &  \quad \mathrm{for}\ \alpha\leq-1/2.
				\end{array}\right.
\end{aligned}
\end{equation}

\end{lemma}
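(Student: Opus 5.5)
Starting from the integral representation \eqref{integral form} of Lemma \ref{FM-2}, apply it to $f=\partial_X U$ and integrate by parts in $Y$ to move the derivative onto the kernel. Write $\lambda=e^{-\frac{3s}{2}}$ and $K_s(Z)=\lambda\,\mathrm{sign}(Z)G_\alpha(\lambda Z)$. The kernel has the integrable singularity \eqref{kernel} at $Z=0$ and a jump of size $2\lambda G_\alpha(0^+)$ there when $\alpha<-1$. To avoid differentiating the kernel at the origin, I would not integrate by parts globally. Instead I split the $Y$-integral into a near region $|X-Y|\le \frac{1}{2}|X|$ and a far region $|X-Y|\ge\frac12|X|$, using a smooth cutoff $\chi\big((X-Y)/|X|\big)$.

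On the near region $|Y|\sim|X|$, so $Y$ lies in the middle field (or slightly beyond, where \eqref{eq:5.1}$_3$ applies). There $|\partial_X U(Y)|\lesssim \langle X\rangle^{-2/3}$ by \eqref{eq:2.9} and \eqref{eq:5.1}$_2$. The near contribution is then bounded by $\langle X\rangle^{-2/3}\int_{|Z|\le |X|/2}|K_s(Z)|\,dZ$. For $-1<\alpha<0$, \eqref{kernel} gives $|K_s(Z)|\lesssim \lambda^{-\alpha}|Z|^{-1-\alpha}$ for $|Z|\le\lambda^{-1}$, and $|X|\le\frac12\lambda^{-1}$, so this is $\lesssim \lambda^{-\alpha}|X|^{-\alpha}\langle X\rangle^{-2/3}= e^{\frac32\alpha s}\langle X\rangle^{-\frac23-\alpha}$. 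That is even better than the claimed $\langle X\rangle^{-\frac12-\alpha}$. For $\alpha\le -1$ the kernel is bounded by $\lambda(\log+1)$, and one gets $\lesssim \lambda\langle X\rangle^{1/3}\log(\cdot)\lesssim e^{-s}$ up to logs, using $|X|\le \lambda^{-1}$. This is acceptable within $e^{-5s/8}$.

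On the far region I integrate by parts, putting the derivative onto $(1-\chi)K_s$, which is smooth there. This gives $\int \partial_Y[(1-\chi)K_s(X-Y)]\,U(Y)\,dY$, and I must use a quantity controlled in $L^\infty$ or $L^2$. I would write $U=(U+e^{s/2}\kappa)-e^{s/2}\kappa$. The constant part contributes nothing, since $\int\partial_Y[\cdot]\,dY=0$. For the first part I use either $L^2$ via \eqref{eq:7} with Cauchy--Schwarz, or $L^\infty$ via \eqref{eq:5.4}$_2$. In the $L^\infty$ route the estimate is $\|\partial_Z[(1-\chi)K_s]\|_{L^1}\cdot Me^{s/2}$. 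Since $\|\partial_Z K_s\|_{L^1(|Z|\ge|X|/2)}\lesssim |K_s(|X|/2)|+\lambda\int_{|Z|\ge |X|/2}\lambda|G'_\alpha(\lambda Z)|dZ$, the result is $\sim\lambda^{-\alpha}|X|^{-1-\alpha}$ for $-1<\alpha<0$. Multiplied by $e^{s/2}$, this loses too much.

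This is the main obstacle. The better route is to use the pointwise decay of $\partial_X U$ on the far region directly, without integrating by parts. On $|Z|\ge|X|/2$, $|K_s(Z)|\lesssim\lambda^{-\alpha}|X|^{-1-\alpha}$, which must be paired with $\|\partial_X U\|_{L^1}$ over the middle field, where it is bounded by $\int\langle Y\rangle^{-2/3}\lesssim \lambda^{-1/2}$. Alternatively, Cauchy--Schwarz with \eqref{eq:7.1} gives $\|K_s\|_{L^2(|Z|\ge |X|/2)}\|\partial_XU\|_{L^2}\lesssim \lambda^{-\alpha}|X|^{-\frac12-\alpha}$ for $-\frac12<\alpha<0$, since $|Z|^{-2-2\alpha}$ is integrable at infinity exactly when $\alpha>-\frac12$. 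Including the exponential tail $|Z|\gtrsim\lambda^{-1}$, whose $L^2$ norm is $\lambda^{1/2}=e^{-3s/4}$, this yields precisely $Ce^{\frac32\alpha s}\langle X\rangle^{-\frac12-\alpha}+Ce^{-\frac{3s}{4}}$. This matches the statement and explains the threshold $\alpha=-1/2$. For $\alpha\le-1/2$ the $L^2$ norm of the kernel on $|Z|\ge h/2$ is dominated by small $|Z|$ (or is just $\lambda^{1/2}$ up to logs), giving $\lesssim \lambda^{1/2}\log(\cdot)$, which is absorbed into $e^{-5s/8}$. With the $L^2$ pairing in hand, the near region can in fact also be handled by Cauchy--Schwarz, so the splitting serves only to track the kernel singularity. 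The remaining work is the case-by-case bookkeeping of \eqref{kernel} in each $\alpha$-range.
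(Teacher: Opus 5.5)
Your final argument is essentially the paper's proof. It bounds the piece $|X-Y|\le |X|/2$ by the pointwise decay $|\partial_XU(Y)|\lesssim\langle X\rangle^{-2/3}$ against the locally integrable kernel $e^{\frac32\alpha s}|X-Y|^{-1-\alpha}$, and the piece $|X-Y|\ge|X|/2$ by Cauchy--Schwarz with $\|\partial_XU\|_{L^2}\le 100$; the exponential tail gives $e^{-\frac{3s}{4}}$, and square-integrability of $|Z|^{-1-\alpha}$ at infinity explains the threshold $\alpha=-\frac12$. The paper differs only in subdividing further and using the mean value theorem with $\|\partial_X^2U\|_{L^\infty}$ on $|X-Y|\le\langle X\rangle^{-2/3}$. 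You correctly see that this cancellation is unnecessary, since $|Z|^{-1-\alpha}$ is integrable at the origin for $\alpha>-1$.

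Drop your closing remark, though. For $-\frac12<\alpha<0$ the kernel is not square-integrable near $Z=0$, because $|Z|^{-2-2\alpha}$ has exponent below $-1$. So the near region cannot be handled by Cauchy--Schwarz, and the pointwise bound there is essential.
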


\begin{proof} By \eqref{integral form}, we split the integral into different parts in the following way: 
\begin{equation*}
\begin{aligned}
&\mathcal{H}(\partial_{X}U)(X,s)\\
&=e^{-\frac{3s}{2}}\int_{A_1}\mathrm{sign}(X-Y)G_{\alpha}\big(e^{-\frac{3s}{2}}(X-Y)\big)\big(\partial_{X}U(Y,s)-\partial_{X}U(X,s)\big)\, d Y\\
&\quad+e^{-\frac{3s}{2}}\bigg(\int_{A_2}+\int_{A_3}+\int_{A_4}\bigg)\mathrm{sign}(X-Y)G_{\alpha}\big(e^{-\frac{3s}{2}}(X-Y)\big)\partial_{X}U(Y,s)\, d Y\\
&=: I_1+I_2+I_3+I_4,
\end{aligned}
\end{equation*}
where 
\begin{equation*}
\begin{aligned}
&A_1=\{|X-Y|\leq \langle X\rangle^{-\frac{2}{3}}\},\\
&A_2=\{\langle X\rangle^{-\frac{2}{3}}<|X-Y|\leq 1\},\\
&A_3=\{1<|X-Y|\leq e^{\frac{3s}{2}}\},\\
&A_4=\{|X-Y|>e^{\frac{3s}{2}}\}.
\end{aligned}
\end{equation*}

Notice that $G_{\alpha}$ is more singular when $\alpha<0$ is larger when $|X|\leq 1$. It suffices to consider the case $-\frac{1}{2}<\alpha<0$ since the other cases can be handled more easily.

For $I_1$, one may use the mean value theorem to bound it as follows:
\begin{equation}\label{sd-1}
\begin{aligned}
|I_1|\leq e^{\frac{3}{2}\alpha s}\|\partial_{X}^2U\|_{L^\infty}\int_{A_1}|X-Y|^{-\alpha}\, d Y \leq C(M)e^{\frac{3}{2}\alpha s}\langle X\rangle^{-\frac{2}{3}(1-\alpha)}.
\end{aligned}
\end{equation}

Considering $I_2$, we divide it into two cases:\\
(I) $|X|\leq \frac{1}{2}e^{\frac{3s}{2}}-1$. In this case, since $|X-Y|\leq 1$, one has
\begin{equation*}
\begin{aligned}
|Y|\leq |X-Y|+|X|\leq \frac{1}{2}e^{\frac{3s}{2}},
\end{aligned}
\end{equation*}
which together with \(\eqref{eq:5.1}_2\) gives 
\begin{equation*}
\begin{aligned}
|\partial_{X}U(Y,s)|\leq (\varepsilon^{\frac{1}{8}}+1)\langle Y\rangle^{-\frac{2}{3}}\lesssim \langle X\rangle^{-\frac{2}{3}}.
\end{aligned}
\end{equation*}
This implies that
\begin{equation}\label{sd-2}
\begin{aligned}
|I_2|\lesssim e^{\frac{3}{2}\alpha s}\int_{A_2}\frac{\langle X\rangle^{-\frac{2}{3}}}{|X-Y|^{1+\alpha}}\, d Y \lesssim e^{\frac{3}{2}\alpha s}\langle X\rangle^{-\frac{2}{3}}.
\end{aligned}
\end{equation}
(II) $\frac{1}{2}e^{\frac{3s}{2}}-1\leq |X|\leq \frac{1}{2}e^{\frac{3s}{2}}$. It suffices to consider the situation $|Y|>\frac{1}{2}e^{\frac{3s}{2}}$, otherwise we are back to (I) and have \eqref{sd-2}. 
Suppose $|Y|>\frac{1}{2}e^{\frac{3s}{2}}$, then it follows from \(\eqref{eq:5.1}_3\) that
\begin{equation*}
\begin{aligned}
|\partial_{X}U(Y,s)|\leq 2e^{-s}\lesssim |X|^{-\frac{2}{3}}\lesssim \langle X\rangle^{-\frac{2}{3}}.
\end{aligned}
\end{equation*}
Again \eqref{sd-2} holds.

To tackle $I_3$, we first assume $|X|\geq 3$ and only analyze $X\geq 3$ since $X\leq -3$ can be handed similarly. 
To this end, we decompose it in the following manner:
\begin{equation*}
\begin{aligned}
I_3&=e^{-\frac{3s}{2}}\bigg(\int_{A_{3_1}}+\int_{A_{3_2}}+\int_{A_{3_3}}\bigg)\mathrm{sign}(X-Y)G_{\alpha}\big(e^{-\frac{3s}{2}}(X-Y)\big)\partial_{X}U(Y,s)\, d Y\\
&=: I_{3_1}+I_{3_2}+I_{3_3},
\end{aligned}
\end{equation*}
where 
\begin{equation*}
\begin{aligned}
&A_{3_1}=\{X/2<|X-Y|\leq e^{\frac{3s}{2}}\},\\
&A_{3_2}=\{X/2\leq Y\leq X-1\},\\
&A_{3_3}=\{X+1\leq Y\leq 3X/2\}.
\end{aligned}
\end{equation*}

First of all, one may estimate $I_{3_1}$ and $I_{3_2}$ as follows:
\begin{equation}\label{sd-2.1}
\begin{aligned}
|I_{3_1}|&\leq e^{\frac{3}{2}\alpha s}\|\partial_{X}U\|_{L^2}\bigg(\int_{A_{3_1}}\frac{1}{|X-Y|^{2(1+\alpha)}}\, d Y\bigg)^{1/2} \\
&\lesssim e^{\frac{3}{2}\alpha s}\bigg| |Z|^{-\frac{1}{2}-\alpha}\big|_{\frac{X}{2}}^{e^{\frac{3s}{2}}}\bigg|\lesssim e^{\frac{3}{2}\alpha s}\langle X\rangle^{-\frac{1}{2}-\alpha}
\end{aligned}
\end{equation}
and
\begin{equation}\label{sd-2.2}
\begin{aligned}
|I_{3_2}|&\lesssim e^{\frac{3}{2}\alpha s} \int_{A_{3_2}}\frac{\langle Y\rangle^{-\frac{2}{3}}}{|X-Y|^{1+\alpha}}\, d Y \lesssim e^{\frac{3}{2}\alpha s}\langle X\rangle^{-\frac{2}{3}} \int_{A_{3_2}}\frac{1}{|X-Y|^{1+\alpha}}\, d Y\\
&\lesssim e^{\frac{3}{2}\alpha s}\langle X\rangle^{-\frac{2}{3}}\bigg| |Z|^{-\alpha}\big|_{1}^{\frac{X}{2}}\bigg|
\lesssim e^{\frac{3}{2}\alpha s}\langle X\rangle^{-\frac{2}{3}-\alpha},
\end{aligned}
\end{equation}
where \eqref{eq:7.1} and \(\eqref{eq:5.1}_2\) have been used. 

Second, for $I_{3_3}$, we divide it into two cases:\\
(I) When $3<X\leq \frac{1}{3}e^{\frac{3s}{2}}$, that is, $\frac{X}{2}\geq 1$ and $\frac{3X}{2}\leq \frac{1}{2}e^{\frac{3s}{2}}$. Then one has
\begin{equation}\label{sd-2.3}
\begin{aligned}
|I_{3_3}|\lesssim e^{\frac{3}{2}\alpha s} \int_{A_{3_3}}\frac{\langle Y\rangle^{-\frac{2}{3}}}{|X-Y|^{1+\alpha}}\, d Y\lesssim e^{\frac{3}{2}\alpha s}\langle X\rangle^{-\frac{2}{3}-\alpha}.
\end{aligned}
\end{equation}
(II) When $\frac{1}{3}e^{\frac{3s}{2}}<X\leq \frac{1}{2}e^{\frac{3s}{2}}$, that is, $\frac{3X}{2}> \frac{1}{2}e^{\frac{3s}{2}}$. Then one may estimate
\begin{equation}\label{sd-2.4}
\begin{aligned}
|I_{3_3}|&\leq e^{-\frac{3s}{2}}\bigg( \int_{\min(X+1, \frac{1}{2}e^{\frac{3s}{2}})}^{\frac{1}{2}e^{\frac{3s}{2}}}+\int_{\max(X+1, \frac{1}{2}e^{\frac{3s}{2}})}^{\frac{3X}{2}}
\bigg)\\
&\quad\times G_{\alpha}\big(e^{-\frac{3s}{2}}(X-Y)\big)|\partial_{X}U(Y,s)|\, d Y \\
&\lesssim e^{\frac{3}{2}\alpha s} \int_{X+1}^{\frac{3X}{2}}\frac{\langle Y\rangle^{-\frac{2}{3}}}{|X-Y|^{1+\alpha}}\, d Y
+e^{\frac{3}{2}\alpha s} \int_{X+1}^{\frac{3X}{2}}\frac{e^{-s}}{|X-Y|^{1+\alpha}}\, d Y\\
&\lesssim e^{\frac{3}{2}\alpha s}\langle X\rangle^{-\frac{2}{3}-\alpha}+e^{\frac{3}{2}\alpha s}e^{-s}\langle X\rangle^{-\alpha}\lesssim e^{\frac{3}{2}\alpha s}\langle X\rangle^{-\frac{2}{3}-\alpha}.
\end{aligned}
\end{equation}

Last, it remains to consider the case $h\leq |X|<3$. Indeed, noticing $\langle X\rangle \sim 1$, one can  simply estimate  
\begin{equation}\label{sd-2.5}
\begin{aligned}
|I_{3_3}|&\leq e^{\frac{3}{2}\alpha s}\|\partial_{X}U\|_{L^2}\bigg(\int_{A_3}\frac{1}{|X-Y|^{2(1+\alpha)}}\, d Y\bigg)^{1/2} \\
&\lesssim e^{\frac{3}{2}\alpha s}\lesssim e^{\frac{3}{2}\alpha s}\langle X\rangle^{-\frac{1}{2}-\alpha}.
\end{aligned}
\end{equation}

We finally treat with $I_4$. Since $G_{\alpha}$ decays exponentially in space when when $|X|> 1$, this will contributes a good temporal decay for $I_4$. Indeed, we have
\begin{equation}\label{sd-3}
\begin{aligned}
|I_4|\leq e^{-\frac{3s}{2}}\|\partial_{X}U\|_{L^2}\bigg(\int_{A_3}e^{-e^{-\frac{3s}{2}}|X-Y|}\, d Y\bigg)^{\frac{1}{2}}\lesssim e^{-\frac{3s}{2}}e^{\frac{3s}{4}} \lesssim e^{-\frac{3s}{4}}. 
\end{aligned}
\end{equation}

Collecting \eqref{sd-1}-\eqref{sd-3}  yields \eqref{spatial decay}.

\end{proof}

To close the bootstrap assumption
\(\eqref{eq:5.1}_3\), one needs the following temporal decay estimate for $\mathcal{H}(\partial_{X}U)$ in the far field $\frac{1}{2}e^{\frac{3s}{2}}\leq |X|<\infty $. 

\begin{lemma}\label{FM-6}  For $(X,s)$ such that $\frac{1}{2}e^{\frac{3s}{2}}\leq |X|<\infty $, we have
\begin{equation}\label{spatial decay-2}
\begin{aligned}
|\mathcal{H}(\partial_{X}U)(X,s)|\leq Ce^{-\frac{5s}{8}}.
\end{aligned}
\end{equation}

\end{lemma}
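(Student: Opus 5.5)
We follow the scheme of the proof of Lemma \ref{FM-5}: write $\mathcal{H}(\partial_X U)$ in the integral form \eqref{integral form} and cut the $Y$-integral according to $|X-Y|$. Let $K_s(Z)=e^{-\frac{3s}{2}}G_\alpha(e^{-\frac{3s}{2}}Z)$. By \eqref{kernel}, for $-1<\alpha<0$ we have $|K_s(Z)|\lesssim e^{\frac{3}{2}\alpha s}|Z|^{-1-\alpha}$ when $|Z|\le e^{\frac{3s}{2}}$, and $|K_s(Z)|\lesssim e^{-\frac{3s}{2}}e^{-\frac12 e^{-\frac{3s}{2}}|Z|}$ when $|Z|>e^{\frac{3s}{2}}$. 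The cases $\alpha\le -1$ only replace the local singularity by a logarithm or a bounded function, so they are easier. As in Lemma \ref{FM-5}, we concentrate on the most singular range $-1/2<\alpha<0$, and we also record the modifications needed when $-1<\alpha\le-1/2$. The splitting is
\begin{equation*}
B_1=\{|X-Y|\le e^{-\frac{s}{4}}\},\quad B_2=\{e^{-\frac{s}{4}}<|X-Y|\le e^{\frac{3s}{2}}\},\quad B_3=\{|X-Y|>e^{\frac{3s}{2}}\}.
\end{equation*}

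\emph{Near part $B_1$.} We take $|X|\ge \frac12 e^{\frac{3s}{2}}$. The local integral of the kernel over a symmetric interval is not small, so we first subtract $\partial_XU(X,s)$. Because $\mathrm{sign}(X-Y)G_\alpha$ is odd in $X-Y$ and $B_1$ is symmetric about $X$, the subtracted constant contributes zero. The mean value theorem and \eqref{eq:5.9} then give
\begin{equation*}
|I_{B_1}|\lesssim e^{\frac32\alpha s}M^{\frac15}\int_{|Z|\le e^{-s/4}}|Z|^{-\alpha}\,dZ\lesssim C(M)e^{\frac32\alpha s}e^{-\frac{(1-\alpha)s}{4}}.
\end{equation*}
The exponent here is $\frac32\alpha-\frac14+\frac{\alpha}{4}$. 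It is $\le-\frac58$ exactly when $\alpha\le-\frac3{14}$, so for $-\frac3{14}<\alpha<0$ the fixed cutoff $e^{-\frac s4}$ is too large. In that range we choose the inner radius $e^{-\lambda s}$ with $\lambda$ large enough, depending on $\alpha$, and the near-part bound is then at most $e^{-\frac58 s}$. Since the constraint depends on $\alpha$, $\lambda$ has to be adjusted when the cases are written out.

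\emph{Middle part $B_2$.} Here we use Cauchy--Schwarz with \eqref{eq:7.1}, exactly as in \eqref{sd-2.1}:
\begin{equation*}
|I_{B_2}|\lesssim e^{\frac32\alpha s}\|\partial_XU\|_{L^2}\Big(\int_{|Z|>e^{-\lambda s}}|Z|^{-2-2\alpha}\,dZ\Big)^{1/2}\lesssim e^{\frac32\alpha s}e^{\lambda(\frac12+\alpha)s}.
\end{equation*}
This is the main obstacle. Because $\alpha>-1/2$, the $L^2$ method loses at small $|Z|$, and that loss conflicts with the choice of $\lambda$ made in the near part. To resolve it, we use the pointwise far-field bootstrap $\eqref{eq:5.1}_3$ on the portion of $B_2$ where $|Y|\ge\frac12 e^{\frac{3s}{2}}$, which gives $|\partial_XU(Y)|\le 2e^{-s}$. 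On that portion,
\begin{equation*}
e^{\frac32\alpha s}e^{-s}\int_{e^{-\lambda s}<|Z|\le e^{\frac{3s}{2}}}|Z|^{-1-\alpha}\,dZ\lesssim e^{\frac32\alpha s}e^{-s}e^{-\frac32\alpha s}=e^{-s},
\end{equation*}
where the last integral is dominated by its upper end, and its lower end is harmless because $-\alpha<0$. On the remaining portion, $|Y|<\frac12e^{\frac{3s}{2}}$ and $|X|\ge\frac12e^{\frac{3s}{2}}$. If also $|X-Y|\le 1$, then $Y$ lies in the middle field near $|Y|\sim e^{\frac{3s}{2}}$, and $\eqref{eq:5.1}_2$ together with \eqref{eq:2.9} give $|\partial_XU(Y)|\lesssim e^{-s}$, so the same computation applies. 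For $|X-Y|\ge1$, the $L^2$ estimate only costs $e^{\frac32\alpha s}$, which is still not decaying, so we split once more. Where $|Y|\ge\frac12|X|$, we again have $|\partial_XU(Y)|\lesssim e^{-s}$. Where $|Y|<\frac12|X|$, we have $|X-Y|\gtrsim e^{\frac{3s}{2}}$, and the $L^2$ bound there gives $e^{\frac32\alpha s}(e^{\frac{3s}{2}})^{-\frac12-\alpha}=e^{-\frac34 s}$.

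\emph{Tail $B_3$.} As in \eqref{sd-3}, Cauchy--Schwarz with the exponential tail of $G_\alpha$ and \eqref{eq:7.1} gives $\lesssim e^{-\frac34 s}$.

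Collecting the pieces, each term is bounded by $C(M)e^{-\frac{5s}{8}}$ or better, using $e^{-s}$ and $e^{-\frac34 s}\le e^{-\frac58 s}$. Any $C(M)$ can be absorbed into $C$ using the spare decay and $s\ge s_0=-\log\varepsilon$ with $\varepsilon=\varepsilon(M)$ small.
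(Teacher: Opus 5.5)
Your argument is correct and is essentially the paper's proof. Both use the same ingredients: a mean-value estimate on a tiny neighbourhood $|X-Y|\le e^{-\lambda s}$ after subtracting $\partial_XU(X,s)$; the pointwise bound $|\partial_XU(Y,s)|\lesssim e^{-s}$ wherever $|Y|\gtrsim e^{\frac{3s}{2}}$; Cauchy--Schwarz with \eqref{eq:7.1} only on the annulus $|X-Y|\sim e^{\frac{3s}{2}}$; and the exponential tail of $G_\alpha$. The paper organises the cuts by $|X-Y|$ rather than by $|Y|$, which is only cosmetic.

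The ``conflict'' you describe in $B_2$ is not real, because your pointwise treatment there is uniform in the inner radius. You can therefore simply fix $\lambda=1$, which is the paper's cutoff $e^{-s}$ and gives $C(M)e^{(\frac52\alpha-1)s}$ for every $\alpha<0$. There is also a small sign slip: the lower end of $\int|Z|^{-1-\alpha}\,dZ$ is harmless because $-\alpha>0$, not $-\alpha<0$.
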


\begin{proof}

We instead decompose the integral as follows: 
\begin{equation*}
\begin{aligned}
&\mathcal{H}(\partial_{X}U)(X,s)\\
&=e^{-\frac{3s}{2}}\int_{B_1}\mathrm{sign}(X-Y)G_{\alpha}\big(e^{-\frac{3s}{2}}(X-Y)\big)\big(\partial_{X}U(Y,s)-\partial_{X}U(X,s)\big)\, d Y\\
&\quad+e^{-\frac{3s}{2}}\bigg(\int_{B_2}+\int_{B_3}+\int_{B_4}\bigg)\mathrm{sign}(X-Y)G_{\alpha}\big(e^{-\frac{3s}{2}}(X-Y)\big)\partial_{X}U(Y,s)\, d Y\\
&=: J_1+J_2+J_3+J_4,
\end{aligned}
\end{equation*}
where 
\begin{equation*}
\begin{aligned}
&B_1=\{|X-Y|\leq e^{-s}\},\\
&B_2=\{e^{-s}<|X-Y|\leq 1\},\\
&B_3=\{1<|X-Y|\leq e^{\frac{3s}{2}}\},\\
&B_4=\{|X-Y|>e^{\frac{3s}{2}}\}.
\end{aligned}
\end{equation*}

Again we only consider the case $-\frac{1}{2}<\alpha<0$ since the other cases can be handled more easily.

Notice that $J_4$ contributes a bound $e^{-\frac{3s}{4}}$ which can be estimated exactly as $I_4$ in \eqref{sd-3}. 
For $J_1$, similar to \eqref{sd-1}, it holds that
\begin{equation*}
\begin{aligned}
|J_1|\leq e^{\frac{3}{2}\alpha s}\|\partial_{X}^2U\|_{L^\infty}\int_{B_1}|X-Y|^{-\alpha}\, d Y \leq C(M)e^{(\frac{5}{2}\alpha-1) s}.
\end{aligned}
\end{equation*}

To handle $J_2$, again there are two cases to consider:\\
(i) $|X|\geq \frac{1}{2}e^{\frac{3s}{2}}+1$. Since
\begin{equation*}
\begin{aligned}
|Y|\geq |X|-|X-Y|\geq |X|-1\geq \frac{1}{2}e^{\frac{3s}{2}},
\end{aligned}
\end{equation*}
it follows from \(\eqref{eq:5.1}_3\) that
\begin{equation*}
\begin{aligned}
|\partial_{X}U(Y,s)|\leq 2e^{-s},
\end{aligned}
\end{equation*}
which leads to
\begin{equation*}
\begin{aligned}
|J_2|\lesssim e^{\frac{3}{2}\alpha s}\int_{B_2}\frac{e^{-s}}{|X-Y|^{1+\alpha}}\, d Y \lesssim e^{(\frac{3}{2}\alpha-1) s}.
\end{aligned}
\end{equation*}
(ii) $\frac{1}{2}e^{\frac{3s}{2}}\leq |X|\leq \frac{1}{2}e^{\frac{3s}{2}}+1$. Again it suffices to consider the situation $|Y|\leq \frac{1}{2}e^{\frac{3s}{2}}$. Hence, it follows from \(\eqref{eq:5.1}_2\) that
\begin{equation*}
\begin{aligned}
|\partial_{X}U(Y,s)|\leq (\varepsilon^{\frac{1}{8}}+1)\langle Y\rangle^{-\frac{2}{3}}\lesssim \langle X\rangle^{-\frac{2}{3}}\sim \bigg\langle \frac{1}{2}e^{\frac{3s}{2}}\bigg\rangle^{-\frac{2}{3}} \lesssim e^{-s}.
\end{aligned}
\end{equation*}
Then one still has the same estimate for $J_2$ in (i).

Now we deal with $J_3$. It suffices to analyze $X\geq \frac{1}{2}e^{\frac{3s}{2}}$ since $X\leq -\frac{1}{2}e^{\frac{3s}{2}}$ can be handed similarly. 
For this, we split $J_3$ into three parts:
\begin{equation*}
\begin{aligned}
J_3&=e^{-\frac{3s}{2}}\bigg(\int_{B_{3_1}}+\int_{B_{3_2}}+\int_{B_{3_3}}\bigg)\mathrm{sign}(X-Y)G_{\alpha}\big(e^{-\frac{3s}{2}}(X-Y)\big)\partial_{X}U(Y,s)\, d Y\\
&=: J_{3_1}+J_{3_2}+J_{3_3},
\end{aligned}
\end{equation*}
where 
\begin{equation*}
\begin{aligned}
&B_{3_1}=\{ e^{\frac{3s}{2}}/4< |X-Y|\leq e^{\frac{3s}{2}}\},\\
&B_{3_2}=\{X+1\leq Y\leq X+e^{\frac{3s}{2}}/4\},\\
&B_{3_3}=\{X-e^{\frac{3s}{2}}/4\leq Y\leq X-1\}.
\end{aligned}
\end{equation*}

First, one has 
\begin{equation*}
\begin{aligned}
|J_{3_1}|&\leq e^{\frac{3}{2}\alpha s}\|\partial_{X}U\|_{L^2}\bigg(\int_{B_{3_1}}\frac{1}{|X-Y|^{2(1+\alpha)}}\, d Y\bigg)^{1/2} \\
&\lesssim e^{\frac{3}{2}\alpha s}\bigg| |Z|^{-\frac{1}{2}-\alpha}\big|_{e^{\frac{3s}{2}}/4}^{e^{\frac{3s}{2}}}\bigg|\lesssim e^{-\frac{3s}{4}}
\end{aligned}
\end{equation*}
and
\begin{equation*}
\begin{aligned}
|J_{3_2}|\lesssim e^{\frac{3}{2}\alpha s} \int_{B_{3_2}}\frac{e^{-s}}{|X-Y|^{1+\alpha}}\, d Y
\lesssim e^{\frac{3}{2}\alpha s}e^{-s}\bigg| |Z|^{-\alpha}\big|_{1}^{e^{\frac{3s}{2}}/4}\bigg|
\lesssim e^{-s}.
\end{aligned}
\end{equation*}

Second, to estimate $J_{3_3}$, there are two cases to consider:\\
(I) When $X\geq \frac{3}{4}e^{\frac{3s}{2}}$. Then one finds that
\begin{equation*}
\begin{aligned}
|J_{3_3}|\lesssim e^{\frac{3}{2}\alpha s} \int_{B_{3_3}}\frac{e^{-s}}{|X-Y|^{1+\alpha}}\, d Y
\lesssim e^{\frac{3}{2}\alpha s}e^{-s}\bigg| |Z|^{-\alpha}\big|_{1}^{e^{\frac{3s}{2}}/4}\bigg|\lesssim e^{-s}.
\end{aligned}
\end{equation*}
(II) When $\frac{1}{2}e^{\frac{3s}{2}}\leq X< \frac{3}{4}e^{\frac{3s}{2}}$. Then one deduces that
\begin{equation*}
\begin{aligned}
|J_{3_3}|&\leq e^{-\frac{3s}{2}}\bigg( \int_{X-\frac{1}{4}e^{\frac{3s}{2}}}^{\min(X-1, \frac{1}{2}e^{\frac{3s}{2}})}+\int_{\min(X-1, \frac{1}{2}e^{\frac{3s}{2}})}^{X-1}
\bigg)\\
&\quad\times G_{\alpha}\big(e^{-\frac{3s}{2}}(X-Y)\big)|\partial_{X}U(Y,s)|\, d Y \\
&\lesssim e^{\frac{3}{2}\alpha s}\int_{X-\frac{1}{4}e^{\frac{3s}{2}}}^{\min(X-1, \frac{1}{2}e^{\frac{3s}{2}})}\frac{\langle Y\rangle^{-\frac{2}{3}}}{|X-Y|^{1+\alpha}}\, d Y\\
&\quad+e^{\frac{3}{2}\alpha s} \int_{\min(X-1, \frac{1}{2}e^{\frac{3s}{2}})}^{X-1}\frac{e^{-s}}{|X-Y|^{1+\alpha}}\, d Y\\
&\lesssim e^{\frac{3}{2}\alpha s}e^{-s}e^{-\frac{3}{2}\alpha s}\lesssim e^{-s}.
\end{aligned}
\end{equation*}

Collecting all the cases above, one obtains \eqref{spatial decay-2}.

\end{proof}

\section{$L^\infty$ estimates.}\label{PE}

\subsection{$L^\infty$ bound on $U+e^{\frac{s}{2}}\kappa$.}
\begin{lemma}
		It holds that
		\begin{equation}\label{eq:9.3}
           \begin{aligned}
		\|U+e^{\frac{s}{2}}\kappa\|_{L^{\infty}} \leq  Me^{\frac{s}{2}}. 
           \end{aligned}
		\end{equation}
\end{lemma}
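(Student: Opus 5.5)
Since $U+e^{\frac{s}{2}}\kappa=e^{\frac{s}{2}}u$ by \eqref{eq:2.4}, the claim \eqref{eq:9.3} is equivalent to the physical-space bound $\|u(\cdot,t)\|_{L^\infty}\leq M$. I would therefore work with \eqref{eq:1.1} directly along its characteristics rather than in self-similar variables. Let $\Phi(x_0,t)$ solve $\partial_t\Phi=u(\Phi,t)$, $\Phi(x_0,-\varepsilon)=x_0$. Along these curves,
\begin{equation*}
\frac{d}{dt}u(\Phi(x_0,t),t)=(K_\alpha u_x)(\Phi(x_0,t),t).
\end{equation*}
Integrating from $-\varepsilon$ gives
\begin{equation*}
|u(x,t)|\leq \|u_0\|_{L^\infty}+\int_{-\varepsilon}^{t}\|K_\alpha u_x(\cdot,t')\|_{L^\infty}\,dt'.
\end{equation*}
The flow $\Phi(\cdot,t)$ is a diffeomorphism of $\mathbb{R}$ for $t<T_*$, so every point $x$ is reached.

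The first step is the initial term. By $\eqref{eq:4.2}_4$ at $s=s_0$ we have $\|u_0\|_{L^\infty}=\varepsilon^{\frac12}\|U_0+\varepsilon^{-\frac12}\kappa_0\|_{L^\infty}\leq \frac12 M$. The second step is the forcing term. Since $K_\alpha\partial_x=\partial_x(I-\partial_x^2)^{\frac{\alpha-1}{2}}$ has symbol $\mathrm{i}\xi(1+\xi^2)^{\frac{\alpha-1}{2}}$, which is bounded by $1$ for $\alpha<0$, its kernel is integrable. Concretely, this is the physical-space version of Lemma \ref{FM-2}: undoing the scaling in \eqref{integral form} with $s=0$, one can write $K_\alpha u_x=\int \mathrm{sign}(x-y)G_\alpha(x-y)u(y)\,dy$ up to a low-frequency piece. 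Alternatively, one can write $K_\alpha u_x=(\partial_x G_{Bessel})*u$ with $\partial_x G_{Bessel}\in L^1$ for $\alpha<0$, using the bounds \eqref{kernel} (including the low-frequency part, which is harmless). Either way, Young's inequality gives $\|K_\alpha u_x\|_{L^\infty}\leq C_\alpha\|u\|_{L^\infty}$.

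The bootstrap assumption $\eqref{eq:5.4}_2$ gives $\|u\|_{L^\infty}\leq M$ on $[-\varepsilon,t]$. Combining this with $T_*\leq 2\varepsilon^{\frac74}$ from \eqref{eq:5.6}, the time interval has length at most $\varepsilon+2\varepsilon^{\frac74}\leq 2\varepsilon$. Hence
\begin{equation*}
|u(x,t)|\leq \tfrac12 M+2C_\alpha M\varepsilon\leq \tfrac34 M
\end{equation*}
once $\varepsilon=\varepsilon(M,\alpha)$ is small. Multiplying by $e^{\frac s2}$ strictly improves the bootstrap, giving \eqref{eq:9.3}.

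The main obstacle is the claim that $K_\alpha\partial_x$ maps $L^\infty$ to $L^\infty$ with a constant independent of $s$, uniformly over all $\alpha<0$. Near $x=0$ the kernel of $\partial_x(I-\partial_x^2)^{\frac{\alpha-1}{2}}$ behaves like $|x|^{-1-\alpha}$ (integrable since $\alpha<0$), like $\log$ when $\alpha=-1$, and is bounded when $\alpha<-1$. At infinity it decays exponentially. This is exactly the content of \eqref{kernel}. The only remaining point is to check that the low-frequency cutoff in \eqref{eq:2.5-add} does not spoil integrability; a smooth compactly supported multiplier times a smooth symbol yields a Schwartz kernel, so it does not. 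If one prefers to stay in self-similar variables, the same argument runs through Corollary \ref{FM-4}. The forcing in the equation for $U+e^{\frac s2}\kappa$ is $e^{-s}\beta_\tau(\mathcal H+\mathcal L)(U)$, of size $C(M)e^{-\frac s2}$. The damping factor $-\frac12$ then gives growth exactly $e^{\frac s2}$, and the integrated forcing contributes only $C(M)\varepsilon^{\frac12}e^{\frac s2}$. However, the physical-space route avoids tracking constants through $C(M)$.
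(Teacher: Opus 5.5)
Your proposal is correct and is essentially the paper's argument: the paper composes the transport equation \eqref{eq:9.5} for $e^{-\frac{s}{2}}U+\kappa=u$ with the Lagrangian flow $\Phi^{X_0}$, which is your physical characteristic flow in self-similar coordinates, and bounds the forcing by Corollary \ref{FM-4}; that corollary splits the operator into $\mathcal{H}$ (handled by Young's inequality) and $\mathcal{L}$ (handled by Bernstein's inequality plus $L^2$ conservation), whereas you bound the whole of $K_\alpha\partial_x$ at once through its integrable kernel, which avoids both the splitting and the $L^2$ bound. Two small slips do not affect the argument: a bounded symbol alone does not give an integrable kernel (your explicit Bessel-kernel justification is what actually does the work), and the $L^1$ norm of that kernel is not uniform as $\alpha\to0^-$, which is harmless because $\varepsilon$ may depend on $\alpha$.
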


\begin{proof}
A small calculation finds that $e^{-\frac{s}{2}}U+\kappa$ obeys the following equation:
           \begin{equation}\label{eq:9.5}
           \begin{aligned}
		\partial_s(e^{-\frac{s}{2}}U+\kappa)+V\partial_X(e^{-\frac{s}{2}}U+\kappa)
=\beta_{\tau}e^{-\frac{3s}{2}}\big(\mathcal{H}(U)+\mathcal{L}(U)\big). 
           \end{aligned}
		\end{equation}
Notice that the forcing term can be bounded as follows:
          \begin{equation}\label{eq:9.6}
           \begin{aligned}
\beta_{\tau}e^{-\frac{3s}{2}}|\mathcal{H}(U)+\mathcal{L}(U)|\leq C(M)e^{-\frac{3s}{2}}e^{\frac{s}{2}}\leq e^{-\frac{s}{2}},
           \end{aligned}
		\end{equation}
where \eqref{HL-bound-1} has been used.
We compose \eqref{eq:9.5} with the Lagrangian trajectory $\Phi^{X_{0}}(s)$ (in Appendix \ref{APP}) and use \eqref{eq:9.6} to have 
	\begin{equation*}
	\begin{aligned}
	|(e^{-\frac{s}{2}}U+\kappa)\circ \Phi^{X_{0}}(s)|&\leq |e^{-\frac{s_0}{2}}U(X_0,s_0)+\kappa(-\varepsilon)|
	+\int_{s_0}^{s}e^{-\frac{s^{\prime}}{2}}\, ds^{\prime} \\
	& \leq  \frac{1}{2}M+2\big(e^{-\frac{s_{0}}{2}}-e^{-\frac{s}{2}}\big)  
	\leq   \frac{3}{4}M. 
	\end{aligned}  
	\end{equation*}
This proves \eqref{eq:9.3} and closes the bootstrap assumption \(\eqref{eq:5.4}_2\).

\end{proof}

\subsection{Point estimates on $\tau$, $\xi$ and $\kappa$.}
	
(I) Closing \eqref{eq:5.6}. It follows from \eqref{HL-bound-2} that 
	\begin{equation}\label{eq:10}
      \begin{aligned}
	|\dot{\tau}|&=e^{-s}|\big(\mathcal{H}(\partial_XU)+\mathcal{L}(\partial_XU)\big)(0,s)| \\
&\leq   C(M)e^{-s} \leq \frac{1}{2}e^{-\frac{3s}{4}},
      \end{aligned}
	\end{equation}
where we use a power of $e^{-s}$ to absorb the $M$ factor in the last term. 
\eqref{eq:10} together with \eqref{eq:2.1} by applying the fundamental theorem of calculus yields 
     \begin{equation*}
      \begin{aligned}
	|\tau(t)| \leq  |\tau(-\varepsilon)|+ \int_{-\varepsilon}^{t}|\dot{\tau}(t^{\prime})|\, d t^{\prime} \leq   \frac{1}{2}\varepsilon^{\frac{3}{4}} (2\varepsilon^{\frac{7}{4}}+\varepsilon) \leq   \varepsilon^{\frac{7}{4}}. 
      \end{aligned}
	\end{equation*}

Now, we can show that $T_*$ is the unique fixed point of $\tau$ in $[-\varepsilon, T_*]$. Indeed, setting $f(t)=t-\tau(t)$, one notices by \eqref{eq:2.1}, \eqref{eq:2.2} and \eqref{eq:10} that
\begin{equation}\label{eq:10.1}
      \begin{aligned}
	&f(-\varepsilon)=-\varepsilon,\quad f(T_*)=0,\\
     &\dot{f}\geq 1-\frac{1}{2}e^{-\frac{3s}{4}}\geq 1-\frac{1}{2}\varepsilon^{\frac{3}{4}}>0,
      \end{aligned}
	\end{equation}
which yields $f(t)<0$, namely $t<\tau(t)$, in $[-\varepsilon, T_*)$.  

On the one hand, by \(\eqref{eq:10.1}_1\), one has
\begin{equation*}
\begin{aligned}
\int_{-\varepsilon}^{T_*}(1-\dot{\tau}(t))\, d t=f(T_*)-f(-\varepsilon)=\varepsilon. 
\end{aligned}
\end{equation*}
On the other hand, by \(\eqref{eq:10.1}_2\), it holds that 
\begin{equation*}
\begin{aligned}
\int_{-\varepsilon}^{T_*}(1-\dot{\tau}(t))\, d t\geq \int_{-\varepsilon}^{T_*}(1-\frac{1}{2}\varepsilon^{\frac{3}{4}})\, d t
=(1-\frac{1}{2}\varepsilon^{\frac{3}{4}})(T_*+\varepsilon). 
\end{aligned}
\end{equation*}
Hence, it follows that
\begin{equation}\label{eq:10.2}
\begin{aligned}
T_*\leq \frac{\varepsilon}{1-\frac{1}{2}\varepsilon^{\frac{3}{4}}}-\varepsilon\leq \varepsilon^{\frac{7}{4}}. 
\end{aligned}
\end{equation}

(II) Closing \eqref{eq:5.7}. First, one observes from \eqref{eq:9.3} that
     \begin{equation}\label{eq:10.3}
	\begin{aligned}
	|\kappa(t)|=|u(\xi(t), t)|\leq \|e^{-\frac{s}{2}}U+\kappa\|_{L^{\infty}} \leq  M,
	\end{aligned}  
	\end{equation}
and then applies \eqref{eq:6.3} and \eqref{HL-bound-2} to estimate
	\begin{equation*}
	\begin{aligned}
	|\dot{\xi}| & \leq  |\kappa|+\frac{e^{-\frac{3s}{2}}}{\left|\partial_{X}^{3} U(0,s)\right|}|\big(\mathcal{H}(\partial_X^2U)+\mathcal{L}(\partial_X^2U)\big)(0,s)| \\
	& \leq  M+C(M)e^{-\frac{3s}{2}}\leq   M+e^{-s} \leq   \frac{5}{4} M.
	\end{aligned}  
	\end{equation*}
This together with \eqref{eq:2.1} and \eqref{eq:10.2} by applying the fundamental theorem of calculus
produces  
    \begin{equation*}
	\begin{aligned}
	|\xi(t)| \leq  |\xi(-\varepsilon)|+\int_{-\varepsilon}^{t}|\dot{\xi}|\, d t^{\prime} \leq   \frac{5}{4} M\big(\varepsilon^{\frac{7}{4}}+\varepsilon\big) \leq   \frac{5}{2} M \varepsilon. 
	\end{aligned}  
	\end{equation*}
Hence, one has  
     \begin{equation}\label{eq:10.6}
	\begin{aligned}
|x_*|=|\xi(T_*)|\leq \frac{5}{2}M\varepsilon.
     \end{aligned}  
	\end{equation}

\subsection{Point estimates on $e^{\frac{s}{2}}(\kappa-\dot{\xi})$ and $e^{-\frac{s}{2}}\dot{\kappa}$.} It 
follows from \eqref{eq:3.5} and \eqref{eq:3.6} that
		\begin{equation}\label{eq:10.7}
           \begin{aligned}
		e^{\frac{s}{2}}|\kappa-\dot{\xi}|
&\leq \frac{e^{-s}}{|\partial_{X}^{3}U(0,s)|}
|\big(\mathcal{H}(\partial_X^2U)+\mathcal{L}(\partial_X^2U)\big)(0,s)|\\
&\leq C(M)e^{-s} \leq  e^{-\frac{3s}{4}},
           \end{aligned}
		\end{equation}
and 
\begin{equation}\label{eq:10.8}
\begin{aligned}
e^{-\frac{s}{2}}|\dot{\kappa}|
&\leq e^{\frac{s}{2}}|\kappa-\dot{\xi}|+e^{-s}|\big(\mathcal{H}(U)+\mathcal{L}(U)\big)(0,s)|\\
&\leq e^{-\frac{3s}{4}}+C(M)e^{-s}e^{\frac{s}{2}}\leq e^{-\frac{s}{3}}. 
\end{aligned}
\end{equation}
Here one has used \eqref{HL-bound-2} and \eqref{HL-bound-1} in \eqref{eq:10.7} and \eqref{eq:10.8}, respectively. 

\subsection{$L^\infty$ bound on $U$.}	 It follows from \eqref{eq:9.3} and \eqref{eq:10.3}
that
\begin{equation}\label{U-bound}
\begin{aligned}
\|U\|_{L^\infty}\leq \|U+e^{\frac{s}{2}}\kappa\|_{L^{\infty}}+\|e^{\frac{s}{2}}\kappa\|_{L^{\infty}} \leq  2Me^{\frac{s}{2}}.
\end{aligned}
\end{equation}

\subsection{$L^\infty$ bound on $\partial_{X}^{4}U$.}
Recall \eqref{eq:2.6} with $n=4$: 
	\begin{equation}\label{eq:11}
	\begin{aligned}
	&\bigg(\partial_{s}+\underbrace{\frac{11}{2}+5 \beta_{\tau} \partial_{X} U}_{\mathcal{D}}\bigg) \partial_{X}^{4} U+V\partial_{X}^{5} U \\
	&=\underbrace{\beta_{\tau}e^{-s}\big(\mathcal{H}(\partial_{X}^{4}U)+\mathcal{L}(\partial_{X}^{4}U)\big)-10 \beta_{\tau} \partial_{X}^{2} U \partial_{X}^{3} U}_{\mathcal{F}}.
	\end{aligned}  
	\end{equation}	
We start with bounding the damping below by using \eqref{eq:5.8} and 
\eqref{eq:6.1} as follows: 
\begin{equation*}
	\begin{aligned}
\mathcal{D} \geq   \frac{11}{2}-5\times\frac{101}{100}\times\frac{101}{100} \geq   \frac{1}{4},
\end{aligned}  
	\end{equation*}
which gives
     \begin{equation}\label{eq:11.1}
	\begin{aligned}
	e^{-\int_{s_{0}}^{s} \mathcal{D}\circ\Phi^{X_{0}}(s^{\prime}) \, d s^{\prime}} \leq   e^{-\frac{1}{4}\left(s-s_{0}\right)}.
     \end{aligned} 
	\end{equation}
Then we estimate the forcing terms by \eqref{eq:5.9}, \eqref{eq:6.0}, and \eqref{HL-bound-2} as follows:
\begin{equation}\label{eq:11.2}
	\begin{aligned}
	|\mathcal{F}| &\leq C(M)e^{-s}+C\|\partial^{2}_{X}U\|_{L^{\infty}}\|\partial_{X}^{3}U\|_{L^{\infty}}\\
&\leq  e^{-\frac{s}{2}}+ CM^{\frac{4}{5}} \leq \varepsilon^{\frac{1}{2}}+ CM^{\frac{4}{5}}.
\end{aligned}  
	\end{equation}

Composing \eqref{eq:11} with the Lagrangian trajectory, using \eqref{eq:11.1} and \eqref{eq:11.2} and applying Gronwall's inequality give
	\begin{equation*}
	\begin{aligned}
	\left|\partial_{X}^{4}U \circ \Phi^{X_{0}}(s)\right| 
&\leq \|\partial_{X}^{4} U_0\|_{L^{\infty}} e^{-\int_{s_{0}}^{s} \mathcal{D}\circ\Phi^{X_{0}}(s^{\prime}) \, d s^{\prime}}\\
&\quad+\int_{s_0}^{s}|\mathcal{F}\circ \Phi^{X_{0}}(s^\prime)|e^{-\int_{s^\prime}^{s} \mathcal{D}\circ\Phi^{X_{0}}(s^{\prime\prime}) \, d s^{\prime\prime}}\,ds^{\prime}\\ 
&\leq \|\partial_{X}^{4} U_0\|_{L^{\infty}} e^{-\frac{1}{4}\left(s-s_{0}\right)}+(\varepsilon^{\frac{1}{2}}+CM^{\frac{4}{5}})\int_{s_0}^{s}e^{-\frac{1}{4}(s-s^\prime)}\,ds^{\prime}\\ 
	&\leq \|\partial_{X}^{4} U_0\|_{L^{\infty}} e^{-\frac{1}{4}\left(s-s_{0}\right)}+4(\varepsilon^{\frac{1}{2}}+CM^{\frac{4}{5}})\big(1-e^{-\frac{1}{4}\left(s-s_{0}\right)}\big)\\
	&\leq \frac{1}{2}M+CM^{\frac{4}{5}}\leq \frac{3}{4} M.
	\end{aligned}
	\end{equation*}
This closes the bootstrap assumption \(\eqref{eq:5.4}_1\).

\qed

	\subsection{Near field ($ 0 \leq  |X| \leq h$).}

	\subsubsection{$L^\infty$ bound on $\partial_{X}^{4} \widetilde{U}$.}
\eqref{eq:3.1} with $n=4$ reads as follows: 
	\begin{equation}\label{eq:11.5}
	\begin{aligned}
	\bigg(\partial_{s}+\frac{11}{2}+5 \beta_{\tau} \partial_{X} U\bigg) \partial_{X}^{4} \widetilde{U}+V\partial_{X}^{5} \widetilde{U}
	=\beta_{\tau}e^{-s}\big(\mathcal{H}(\partial_{X}^{4}U)+\mathcal{L}(\partial_{X}^{4}U)\big)-\beta_{\tau}F_{\widetilde{U}}^{(4)}
	\end{aligned}
	\end{equation}
with
      \begin{equation*}
	\begin{aligned}
	F_{\widetilde{U}}^{(4)}&=\partial_{X}^{5}\overline{U}\big(\dot{\tau}\overline{U}+e^{\frac{s}{2}}(\kappa-\dot{\xi})\big)+\dot{\tau}\sum_{k=0}^{3}\binom{4}{k}\partial_{X}^{k+1}\overline{U}\partial_{X}^{4-k}\overline{U} \\
	&\quad+\sum_{k=0}^{3}{\binom{5}{k}}\partial_{X}^{k}\widetilde{U}\partial_{X}^{5-k}\overline{U}+\sum_{k=1}^{2}\binom{4}{k}\partial_{X}^{k+1}\tilde{U}\partial_{X}^{4-k}\widetilde{U}.
	\end{aligned}
	\end{equation*}	
First notice that the estimate \eqref{eq:11.1} for the damping $\mathcal{D}$ still holds. It remains to control the local forcing term $F_{\widetilde{U}}^{(4)}$. In fact,
one may use \eqref{eq:5.0}-\eqref{eq:5.4} to estimate
	\begin{equation}\label{eq:11.7}
	\begin{aligned}
	\beta_{\tau}|F_{\widetilde{U}}^{(4)}|
	&\lesssim \langle X\rangle^{-\frac{14}{3}}(\varepsilon^{\frac{3}{4}}\langle X\rangle^{\frac{1}{3}}+\varepsilon^{\frac{3}{4}})+\varepsilon^{\frac{3}{4}} \sum_{k=0}^{3}\langle X\rangle^{\frac{2}{3}-(k+1)-(4-k)} \\
	&\quad+(\varepsilon^{\frac{1}{3}} h^{4}\langle X\rangle^{-\frac{14}{3}}+\varepsilon^{\frac{1}{3}}  
h^{3}\langle X\rangle^{-\frac{11}{3}}+\varepsilon^{\frac{1}{3}} h^{2}\langle X\rangle^{-\frac{8}{3}}+\varepsilon^{\frac{1}{3}} h\langle X\rangle^{-\frac{5}{3}})\\
	&\quad+\varepsilon^{\frac{1}{8}} h^{2}\cdot\varepsilon^{\frac{1}{8}} h
	\leq 10(\varepsilon^{\frac{3}{4}}+\varepsilon^{\frac{1}{3}} h).
	\end{aligned}  
	\end{equation}
	
We compose \eqref{eq:11.5} with the Lagrangian trajectory and use \eqref{eq:11.1} and \eqref{eq:11.7} to have
	\begin{equation*}
	\begin{aligned}
	|\partial_{X}^{4} \widetilde{U}\circ \Phi^{X_{0}}(s)|&\leq   |\partial_{X}^{4} \widetilde{U}(X_0, s_{0})|e^{-\frac{1}{4}(s-s_{0})}\\ 
	&\quad+(\varepsilon^{\frac{1}{2}}+10\varepsilon^{\frac{3}{4}}+10\varepsilon^{\frac{1}{3}} h)\int_{s_0}^{s}e^{-\frac{1}{4}(s-s^{\prime})}\, ds^{\prime} \\
	& \leq   \frac{1}{4}\varepsilon^{\frac{1}{3}}+4(\varepsilon^{\frac{1}{2}}+10\varepsilon^{\frac{3}{4}}+10\varepsilon^{\frac{1}{3}} h) 
	\leq   \frac{1}{2} \varepsilon^{\frac{1}{3}}.
	\end{aligned}  
	\end{equation*}
This closes the bootstrap assumption \(\eqref{eq:5.3}_2\).

	\subsubsection{$L^\infty$ bound on $\partial_{X}^{3} \widetilde{U}(0,s)$.} 
To calculate $\partial_{X}^{3} \widetilde{U}(0,s)$, we first plug $X=0$ into \eqref{eq:3.1} and use \eqref{eq:3.3} to find
\begin{equation*}
	\begin{aligned}
&\big(\partial_{s}-4\beta_{\tau}\dot{\tau}\big) \partial_{X}^{3}\widetilde{U}(0, s)+\beta_{\tau} e^{\frac{s}{2}}(\kappa-\dot{\xi}) \partial_{X}^{4}\widetilde{U}(0, s)\\
&=\beta_{\tau}e^{-s}\big(\mathcal{H}(\partial_{X}^{3}U)+\mathcal{L}(\partial_{X}^{3}U)\big)(0, s)+18\beta_{\tau}\dot{\tau},
\end{aligned}  
	\end{equation*}
and then estimate
	\begin{equation}\label{eq:11.8}
	\begin{aligned}
|\partial_{s} \partial_{X}^{3}\widetilde{U}(0, s)| &\leq  4\beta_{\tau}|\dot{\tau}||\partial_{X}^{3}\widetilde{U}(0, s)|+\beta_{\tau}e^{\frac{s}{2}}|\kappa-\dot{\xi}||\partial_{X}^{4}\widetilde{U}(0, s)|\\ 
&\quad+\beta_{\tau}e^{-s}|\big(\mathcal{H}(\partial_{X}^{3}U)+\mathcal{L}(\partial_{X}^{3}U)\big)(0, s)|+18\beta_{\tau}|\dot{\tau}| \\
& \lesssim \varepsilon^{\frac{1}{2}}e^{-\frac{3s}{4}}+\varepsilon^{\frac{1}{3}}e^{-\frac{3s}{4}}+C(M)e^{-s}+e^{-\frac{3s}{4}}\leq Ce^{-\frac{3s}{4}},
	\end{aligned} 
	\end{equation}
where \(\eqref{eq:5.3}_2\), \eqref{eq:5.5}, \eqref{eq:5.6} and \eqref{HL-bound-2} have been used. 
	By \eqref{eq:4.2-add}, \eqref{eq:11.8} and the fundamental theorem of calculus, we have
	\begin{equation}\label{eq:11.9}
	\begin{aligned}
	|\partial_{X}^{3} \widetilde{U}(0, s)|&\leq   |\partial_{X}^{3} \widetilde{U}(0, s_{0})|+C\int_{s_{0}}^{s}|\partial_{s} \partial_{X}^{3} \widetilde{U}(0, s^{\prime})|\, d s^{\prime} \\
	& \leq   \frac{1}{4}\varepsilon^{\frac{1}{2}} +C\varepsilon^{\frac{3}{4}}(1-e^{-\frac{3}{4}(s-s_0)})
\leq \frac{1}{2}\varepsilon^{\frac{1}{2}}.
	\end{aligned}
	\end{equation}
	Therefore we have closed the bootstrap assumption \eqref{eq:5.5}.

	\subsubsection{$L^\infty$ bound on  $\partial_{X}^{j} \widetilde{U}(X,s)$ for $j=0,1,2,3$}
We first evaluate $\partial_{X}^{j} \widetilde{U}(X,s)$. 
It follows from \eqref{eq:11.9} and the fundamental theorem of calculus that
	\begin{equation}\label{eq:12}
	\begin{aligned}
	|\partial_{X}^{3} \widetilde{U}(X,s)| & \leq  |\partial_{X}^{3} \widetilde{U}(0,s)|+\int_{0}^{X}|\partial_{X}^{4} \widetilde{U}(X^{\prime}, s)|\, d X^{\prime} \\
	& \leq   \frac{1}{2}\varepsilon^{\frac{1}{2}}+\frac{1}{2} \varepsilon^{\frac{1}{3}} h
	 \leq   \frac{7}{8} \varepsilon^{\frac{1}{3}} h,
	\end{aligned}  
	\end{equation}
which closes the bootstrap assumption \(\eqref{eq:5.3}_1\).
Next, we evaluate $\partial_{X}^{j} \widetilde{U}(X,s)$ for $j=0,1,2$. For this, one first notices that the constraints \eqref{eq:3.3} imply that
      \begin{equation*}
	\begin{aligned}
     \partial_{X}^{j} \widetilde{U}(0, s)=  0,\quad j=0,1,2
      \end{aligned}  
	\end{equation*}
and then applies \eqref{eq:12} and the fundamental theorem of calculus repeatedly to deduce 
\begin{equation*}
	\begin{aligned}
&|\partial_{X}^2 \widetilde{U}(X,s)| \leq   \int_{0}^{X}|\partial_{X}^3 \widetilde{U}(X^{\prime}, s)|\, d X^{\prime} \leq   \frac{7}{8} \varepsilon^{\frac{1}{3}} h^2,\\
&|\partial_{X} \widetilde{U}(X,s)| \leq   \int_{0}^{X}|\partial_{X}^2 \widetilde{U}(X^{\prime}, s)|\, d X^{\prime} \leq   \frac{7}{8} \varepsilon^{\frac{1}{3}} h^3,\\
&|\widetilde{U}(X,s)| \leq   \int_{0}^{X}|\partial_{X} \widetilde{U}(X^{\prime}, s)|\, d X^{\prime} \leq   \frac{7}{8} \varepsilon^{\frac{1}{3}} h^4. 
\end{aligned}
	\end{equation*}

Collecting all the estimates above closes the bootstrap assumptions \(\eqref{eq:5.0}_1\), 
\(\eqref{eq:5.1}_1\) and \(\eqref{eq:5.2}_1\). 

	\subsection{Middle field ($h\leq |X|\leq \frac{1}{2}e^{\frac{3s}{2}}$).}

	\subsubsection{$L^\infty$ bound on $\widetilde{U}$.} 
Define $W:=\langle X\rangle^{-\frac{1}{3}} \widetilde{U}$. Closing the bootstrap assumption \(\eqref{eq:5.0}_2\) reduces to show 
\begin{equation}\label{eq:12.2}
	\begin{aligned}
	|W\circ \Phi^{X_{0}}(s)| \leq \frac{3}{4}\varepsilon^{\frac{1}{4}}. 
\end{aligned} 
	\end{equation}

A small calculation finds that $W$ is governed by  
	\begin{equation}\label{eq:12.3}
	\begin{aligned}
	&\bigg(\partial_{s}\underbrace{-\frac{1}{2}+\beta_{\tau} 
		\partial_{X}\overline{U}+\frac{X}{3\langle X\rangle^{2}} V}_{\mathcal{D}}\bigg) W+V \partial_{X}W \\
	&=\langle X\rangle^{-\frac{1}{3}}\beta_{\tau}\left[e^{-s}\big(\mathcal{H}(U)+\mathcal{L}(U)\big)-e^{-\frac{s}{2}}\dot{\kappa}-F_{\tilde{U}}^{(0)}\right]
	\end{aligned}  
	\end{equation}
with
     \begin{equation*}
	\begin{aligned}
	F_{\tilde{U}}^{(0)}=\partial_{X}\overline{U}\big(\dot{\tau}\overline{U}+e^{\frac{s}{2}}(\kappa-\dot{\xi})\big).
	\end{aligned}  
	\end{equation*}
We first estimate the damping as follows:
	\begin{equation}\label{eq:12.4}
	\begin{aligned}
	\mathcal{D} &=  -\frac{1}{2}+\beta_{\tau}\partial_{X}\overline{U}+\frac{X}{3\langle X\rangle^{2}}\left(\frac{3}{2} X+\beta_{\tau} U+\beta_{\tau} e^{\frac{s}{2}}(\kappa-\dot{\xi})\right) \\
	&\geq  -\frac{1}{2}-\frac{101}{100}\langle X\rangle^{-\frac{2}{3}}+\frac{1}{2}X^2\langle X\rangle^{-2}\\
&\quad-\frac{1}{3}\times\frac{101}{100}(1+\varepsilon^{\frac{1}{4}})X\langle X\rangle^{-\frac{5}{3}}-\frac{1}{3}\times\frac{101}{100}e^{-\frac{3s}{4}} X\langle X\rangle^{-2}\\
	&\geq   -3\langle X\rangle^{-\frac{2}{3}}.
	\end{aligned} 
	\end{equation}
For the non-local forcing term, we need use \eqref{HL-bound-1} to bound it by	
	\begin{equation*}
	\begin{aligned}
	\langle X\rangle^{-\frac{1}{3}}\beta_{\tau}e^{-s}|\mathcal{H}(U)+\mathcal{L}(U)|\leq  C(M)e^{-s}e^{\frac{s}{2}}\leq e^{-\frac{s}{3}}. 
	\end{aligned} 
	\end{equation*}
For the local forcing terms, we may estimate it as
     \begin{equation*}
	\begin{aligned}
	\langle X\rangle^{-\frac{1}{3}}\beta_{\tau}|e^{-\frac{s}{2}}\dot{\kappa}+F_{\tilde{U}}^{(0)}|
	 \lesssim e^{-\frac{s}{3}}+e^{-\frac{3s}{4}}+e^{-\frac{3s}{4}}\leq Ce^{-\frac{s}{3}},
	\end{aligned} 
	\end{equation*}	
where \eqref{eq:2.9}, \eqref{eq:5.6}, \eqref{eq:10.7} and \eqref{eq:10.8} have been used.

Denote by  $s_{*}$ the first time that the Lagrangian trajectory enters the region  $h \leq  |X|\leq  \frac{1}{2}e^{\frac{3s}{2}} $. Following \cite{MR4321245}, one composes the damping with the Lagrangian trajectory and uses \eqref{eq:12.4} to bound
	\begin{equation*}
	\begin{aligned}
	\int_{s_{*}}^{s}\left\langle\Phi^{X_{0}}(s^{\prime})\right\rangle^{-\frac{2}{3}}\, ds^{\prime}&\leq\int_{s_{*}}^{s}\left\langle|X_{0}|e^{\frac{1}{5}(s^{\prime}-s_{*})}\right\rangle^{-\frac{2}{3}}\, ds^{\prime}\\
	&\leq\int_{s_0}^s\left(1+h^2e^{\frac25(s^{\prime}-s_0)}\right)^{-\frac{1}{3}}\, ds^{\prime} \\
	&\leq\int_{s_0}^{s_0+5\log\frac{1}{h}} 1\, ds^{\prime}+\int_{s_0+5\log\frac{1}{h}}^sh^{-\frac{2}{3}}e^{-\frac{2}{15}(s^{\prime}-s_0)}\, ds^{\prime} \\
	&\leq 5\log\frac{1}{h}+\frac{15}{2}\leq 10\log\frac{1}{h},
	\end{aligned}
	\end{equation*}
which leads to
     \begin{equation}\label{eq:12.8}
	\begin{aligned}
	e^{-\int_{s_{*}}^{s} \mathcal{D}\circ\Phi^{X_{0}}(s^{\prime}) \, d s^{\prime}} \leq   e^{30 \log (\frac{1}{h})}=h^{-30}.
     \end{aligned} 
	\end{equation}
By composing \eqref{eq:12.3} with the Lagrangian trajectory and using \eqref{eq:12.8}, we obtain
	\begin{equation*}
	\begin{aligned}
	|W\circ \Phi^{X_{0}}(s)| & \leq  h^{-30}\left|W\circ \Phi^{X_{0}}(s_*)\right| +Ch^{-30} \int_{s_{*}}^{s}  e^{-\frac{s^\prime}{3}}\, d s^{\prime} \\
	& \leq h^{-30}(|W\circ \Phi^{X_{0}}(s_*)|+C\varepsilon^{\frac{1}{3}}).
	\end{aligned} 
	\end{equation*}

There are only two cases occurring: \\
(I)  $h \leq   |X_{0}|\leq   \frac{1}{2}e^{\frac{3s}{2}} $ and  $s_{*}=s_{0}$.  In this case, one shall use \(\eqref{eq:3.8}_2\) to get
\begin{equation*}
	\begin{aligned}
	|W\circ \Phi^{X_{0}}(s)| \leq h^{-30}(\varepsilon^{\frac{1}{2}}+C\varepsilon^{\frac{1}{3}})\leq \frac{3}{4}\varepsilon^{\frac{1}{4}}.
\end{aligned} 
	\end{equation*}
(II) $s_{*}>s_{0}$  and  $|X_{0}|=h$. In the case,  we instead apply \(\eqref{eq:5.0}_1\) to obtain
\begin{equation*}
	\begin{aligned}
|W\circ \Phi^{X_{0}}(s)| \leq h^{-30}(\frac{1}{2}\varepsilon^{\frac{1}{3}} h^{4}\langle h\rangle^{-\frac{1}{3}}+C\varepsilon^{\frac{1}{3}})\leq \frac{3}{4}\varepsilon^{\frac{1}{4}}.
\end{aligned} 
	\end{equation*}
Collecting the above two cases yields \eqref{eq:12.2}.

	\subsubsection{$L^\infty$ bound on $\partial_{X} \widetilde{U}$}
	
Setting  $W:=\langle X\rangle^{\frac{2}{3}} \partial_{X} \widetilde{U}$, in order to close the bootstrap assumption
\(\eqref{eq:5.1}_2\), it suffices to verify
\begin{equation*}
	|W\circ \Phi^{X_{0}}(s)| \leq  \frac{3}{4}\varepsilon^{\frac{1}{8}}. 
	\end{equation*}

It is elementary to calculate that
	\begin{equation}\label{eq:13.1}
	\begin{aligned}
	&\bigg(\partial_{s}+\underbrace{1+\beta_{\tau}(\partial_{X} \widetilde{U}+2 \partial_{X}\overline{U})-\frac{2}{3} \frac{X}{\langle X\rangle^{2}}V}_{\mathcal{D}}\bigg)W+V \partial_{X}W \\
	&=\langle X\rangle^{\frac{2}{3}}\beta_{\tau}\left[e^{-s}\big(\mathcal{H}(\partial_{X}U)+\mathcal{L}(\partial_{X}U)\big)-F_{\widetilde{U}}^{(1)}\right]
	\end{aligned}
	\end{equation}
with
     \begin{equation*}
	\begin{aligned}
	F_{\widetilde{U}}^{(1)}=
\partial_{X}^{2}\overline{U}\big(\dot{\tau}\overline{U}+e^{\frac{s}{2}}(\kappa-\dot{\xi})+\widetilde{U}\big)+\dot{\tau}(\partial_{X}\overline{U})^{2}.
	\end{aligned}
	\end{equation*}
The damping can be bounded below by
     \begin{equation*}
	\begin{aligned}
	\mathcal{D} &\geq  1-\frac{101}{100}(\varepsilon^{\frac{1}{8}}+2)\langle X\rangle^{-\frac{2}{3}}-X^2\langle X\rangle^{-2}\\
&\quad-\frac{2}{3}\times\frac{101}{100}(1+\varepsilon^{\frac{1}{4}})X\langle X\rangle^{-\frac{5}{3}}-\frac{2}{3}\times\frac{101}{100}e^{-\frac{3s}{4}} X\langle X\rangle^{-2}\\
	& \geq  -3\langle X\rangle^{-\frac{2}{3}},
	\end{aligned} 
	\end{equation*}
which implies that  \eqref{eq:12.8}  still holds. For the non-local term, we shall use \eqref{spatial decay} to estimate 
when $-1/2<\alpha<0$
     \begin{equation*}
	\begin{aligned}
	&\langle X\rangle^{\frac{2}{3}}\beta_{\tau}e^{-s}|\mathcal{H}(\partial_{X}U)+\mathcal{L}(\partial_{X}U)|\\
&\lesssim  e^{-s}
(e^{\frac{3}{2}\alpha s}\langle X\rangle^{\frac{1}{6}-\alpha}+\langle X\rangle^{\frac{2}{3}}e^{-\frac{3s}{4}}+\langle X\rangle^{\frac{2}{3}}e^{-s})\\
& \lesssim  e^{-s}
\big(e^{\frac{3}{2}\alpha s}e^{\frac{s}{4}}e^{-\frac{3}{2}\alpha s}+e^s e^{-\frac{3s}{4}}+e^s e^{-s}\big)\leq Ce^{-\frac{3s}{4}}
	\end{aligned}
	\end{equation*}
and for $\alpha\leq -1/2$
 \begin{equation*}
	\begin{aligned}
	&\langle X\rangle^{\frac{2}{3}}\beta_{\tau}e^{-s}|\mathcal{H}(\partial_{X}U)+\mathcal{L}(\partial_{X}U)|\\
&\lesssim  e^{-s}
(\langle X\rangle^{\frac{2}{3}}e^{-\frac{5s}{8}}+\langle X\rangle^{\frac{2}{3}}e^{-s})\\
& \leq  e^{-s}
\big(e^se^{-\frac{5s}{8}}+e^s e^{-s}\big)\leq Ce^{-\frac{5s}{8}}.
	\end{aligned}
	\end{equation*}
For the local term, we shall carefully use the decay of $\partial_{X}^j\overline{U}\ (j=1,2)$ in \eqref{eq:2.9} to kill the growth of $\langle X\rangle^{\frac{2}{3}}$ as follows: 
    \begin{equation*}
	\begin{aligned}
	\langle X\rangle^{\frac{2}{3}}\beta_{\tau}|F_{\widetilde{U}}^{(1)}|
	&\lesssim \langle X\rangle^{\frac{2}{3}}\big(e^{-\frac{3s}{4}}\langle X\rangle^{-\frac{4}{3}}
+e^{-\frac{3s}{4}}\langle X\rangle^{-\frac{5}{3}}
+\varepsilon^{\frac{1}{4}}\langle X\rangle^{-\frac{4}{3}}
+e^{-\frac{3s}{4}}\langle X\rangle^{-\frac{4}{3}}\big)\\
	&\leq  Ce^{-\frac{3s}{4}}+C\varepsilon^{\frac{1}{4}}\langle X\rangle^{-\frac{2}{3}}.
	\end{aligned}
	\end{equation*}

Composing \eqref{eq:13.1} with the Lagrangian trajectory and collecting all the estimates above, we get
     \begin{equation*}
	\begin{aligned}
	|W\circ \Phi^{X_{0}}(s)|& \leq  h^{-30}|W\circ \Phi^{X_{0}}(s_*)|
+Ch^{-30}\int_{s_{*}}^{s}e^{-\frac{5s^\prime}{8}}\, ds^{\prime}  \\
&\quad+C\varepsilon^{\frac{1}{4}}h^{-30}\int_{s_{*}}^{s}\langle\Phi^{X_{0}}(s^{\prime})\rangle^{-\frac{2}{3}}\, ds^{\prime} \\
	&\leq   h^{-30}(|W\circ \Phi^{X_{0}}(s_*))|+C\varepsilon^{\frac{5}{8}})
+C\varepsilon^{\frac{1}{4}}h^{-30}\log\frac{1}{h}.
	\end{aligned}
	\end{equation*}

Again, we have two cases to consider:\\
(I) if $h \leq |X_{0}|\leq \frac{1}{2}e^{\frac{3s}{2}}$  and  $s_{*}=s_{0}$, then it follows from 
\(\eqref{eq:3.9}_2\) that 
     \begin{equation*}
	\begin{aligned}
	|W\circ \Phi^{X_{0}}(s)|  &\leq    h^{-30}(\varepsilon^{\frac{1}{4}}+C\varepsilon^{\frac{5}{8}})
+Ch^{-30}\varepsilon^{\frac{1}{4}}\log\frac{1}{h}
	\leq \frac{3}{4}\varepsilon^{\frac{1}{8}}.
	\end{aligned}  
	\end{equation*}
(II) if  $s_{*}>s_{0}$  and  $|X_{0}|=h$, then one instead uses \(\eqref{eq:5.1}_1\) to find  
     \begin{equation*}
	\begin{aligned}
	|W\circ \Phi^{X_{0}}(s)| &\leq h^{-30}\bigg(\frac{1}{2}\varepsilon^{\frac{1}{3}}h^3\langle h\rangle^{\frac{2}{3}}+C\varepsilon^{\frac{5}{8}}\bigg)
+C\varepsilon^{\frac{1}{4}}h^{-30}\log\frac{1}{h}
	\leq   \frac{3}{4}\varepsilon^{\frac{1}{8}}.
	\end{aligned}
	\end{equation*}
Collecting the above two cases yields the desired result.

	\subsection{Far field ($\frac{1}{2}e^{\frac{3s}{2}}\leq |X|<\infty$)} 

Setting  $W:=e^s\partial_{X} U$, in order to close the bootstrap assumption
\(\eqref{eq:5.1}_3\), it suffices to show
\begin{equation*}
	|W\circ \Phi^{X_{0}}(s)| \leq  \frac{3}{2}. 
\end{equation*}

One may deduce from \eqref{eq:2.6} that $W$ satisfies 
     \begin{equation}\label{TW-1}
	\begin{aligned}
	(\partial_{s}+\beta_{\tau}\partial_{X}U)W+V\partial_{X}W
	=\beta_{\tau}\big(\mathcal{H}(\partial_{X}U)+\mathcal{L}(\partial_{X}U)\big). 
	\end{aligned}
	\end{equation}
By \(\eqref{eq:5.1}_3\), the damping is bounded below by
     \begin{equation*}
	\begin{aligned}
	\mathcal{D}=\beta_{\tau}\partial_{X}U\geq -\frac{101}{100}\times 2e^{-s}\geq -3e^{-s},
	\end{aligned}
	\end{equation*}
which gives
     \begin{equation}\label{TW-2}
	\begin{aligned}
	e^{-\int_{s_{*}}^{s} \mathcal{D}\circ\Phi^{X_{0}}(s^{\prime}) \, d s^{\prime}} \leq   e^{3e^{-s_{*}}}\leq e^{3\varepsilon}\leq \frac{5}{4}.
     \end{aligned} 
	\end{equation}
For the forcing term, one shall apply \eqref{DL} and \eqref{spatial decay-2} to estimate it as follows:
      \begin{equation}\label{TW-3}
	\begin{aligned}
	\beta_{\tau}|\mathcal{H}(\partial_{X}U)+\mathcal{L}(\partial_{X}U)|
\leq  Ce^{-\frac{5s}{8}}+C(M)e^{-s}\leq Ce^{-\frac{5s}{8}}.
	\end{aligned}
	\end{equation}
Then it follows from \eqref{TW-1}-\eqref{TW-3} that 
     \begin{equation*}
	\begin{aligned}
	|W\circ \Phi^{X_{0}}(s)|& \leq  \frac{5}{4}|W\circ \Phi^{X_{0}}(s_*)|
+C\int_{s_{*}}^{s}e^{-\frac{5s^\prime}{8}}\, ds^{\prime}\\
	&\leq    \frac{5}{4}|W\circ \Phi^{X_{0}}(s_*)|+C\varepsilon^{\frac{5}{8}}.
	\end{aligned}
	\end{equation*}

There are two cases to consider:\\
(I)  $\frac{1}{2}e^{\frac{3s}{2}} \leq |X_{0}|<\infty $  and  $s_{*}=s_{0}$. In this case, one uses \(\eqref{eq:3.9}_3\) to find 
     \begin{equation*}
	\begin{aligned}
	|W\circ \Phi^{X_{0}}(s)|  &\leq    \frac{5}{4}e^{s_0}\varepsilon^{-1}+C\varepsilon^{\frac{5}{8}}
	\leq \frac{3}{2}.
	\end{aligned}  
	\end{equation*}
(II) $s_{*}>s_{0}$  and  $|X_{0}|=\frac{1}{2}e^{\frac{3s_*}{2}}$. In this case, one instead uses \(\eqref{eq:5.1}_2\) to deduce  
     \begin{equation*}
	\begin{aligned}
	|W\circ \Phi^{X_{0}}(s)| &\leq \frac{5}{4}e^{s_*}\bigg(\varepsilon^{\frac{1}{8}}+\frac{7}{20}\bigg)\langle X_0\rangle^{-\frac{2}{3}}+C\varepsilon^{\frac{5}{8}}\\
&\leq \frac{5}{4}\times \frac{2}{5}\times 2^{\frac{2}{3}}e^{s_*}e^{-s_*}+C\varepsilon^{\frac{5}{8}}
\leq   \frac{3}{2},
	\end{aligned}
	\end{equation*}
where one has used \eqref{eq:2.9-add} to get a sharper estimate 
          \begin{equation*}
           \begin{aligned}
		|\partial_{X} U(X, s)|& \leq |\partial_{X} \widetilde{U}(X, s)|+|\partial_{X} \overline{U}(X)| \\
&\leq \bigg(\varepsilon^{\frac{1}{8}}+\frac{7}{20}\bigg)\langle X_0\rangle^{-\frac{2}{3}}.
           \end{aligned}
		\end{equation*}

Collecting the above two cases yields the desired result.

     \subsection{$L^\infty$ bound for $\partial_{X}^{2} U$ on $h\leq |X|<\infty$}

	We recall \eqref{eq:2.6} with $n=2$
	\begin{equation*}
	\begin{aligned}
	\bigg(\partial_{s}+\underbrace{\frac{5}{2}+3 \beta_{\tau} \partial_{X} U}_{\mathcal{D}}\bigg) \partial_{X}^{2} U+V \partial_{X}^{3}U 
	=\beta_{\tau}e^{-s}\big(\mathcal{H}(\partial_{X}^{2}U)+\mathcal{L}(\partial_{X}^{2}U)\big). 
	\end{aligned} 
	\end{equation*}
To close the bootstrap assumption \(\eqref{eq:5.2}_2\), it suffices to show
     \begin{equation*}
	\begin{aligned}
	|\partial_{X}^{2} U\circ \Phi^{X_{0}}(s)| \leq   \frac{3}{4} M^{\frac{1}{5}}.
	\end{aligned}
	\end{equation*}

The forcing term can be bounded by 
	\begin{equation*}
	\begin{aligned}
	\beta_{\tau} e^{-s}|\mathcal{H}(\partial_{X}^{2}U)+\mathcal{L}(\partial_{X}^{2}U)| \leq  C(M)e^{-s}
	\leq  e^{-\frac{s}{2}}. 
	\end{aligned}
	\end{equation*}
For the damping, there are two cases to consider depending on the size of $h$. 

\underline{\emph{Case 1: $h \leq  |X|\leq   \frac{1}{2}e^{\frac{3s}{2}}$.}} In this case, 
	the damping is bounded below by
	\begin{equation*}
	\begin{aligned}
	\mathcal{D} \geq  \frac{5}{2}-3\times\frac{101}{100}(1+\varepsilon^{\frac{1}{8}})\langle X\rangle^{-\frac{2}{3}}
	 \geq  -3\langle X\rangle^{-\frac{2}{3}},
	\end{aligned} 
	\end{equation*}
which yields the same estimate \eqref{eq:12.8}. Then we have
     \begin{equation*}
	\begin{aligned}
	|\partial_{X}^{2} U\circ \Phi^{X_{0}}(s)|& \leq  h^{-30}|\partial_{X}^{2} U\circ \Phi^{X_{0}}(s_*)|+h^{-30}\int_{s_{*}}^{s}e^{-\frac{s^{\prime}}{2}}ds^{\prime}  \\
	&\leq   h^{-30}(|\partial_{X}^{2} U\circ \Phi^{X_{0}}(s_*)|+2\varepsilon^{\frac{1}{2}}).
	\end{aligned}
	\end{equation*}
This can be further estimated as follows:\\
(I)  $h \leq   |X_{0}|\leq   \frac{1}{2}e^{\frac{3s}{2}} $ and  $s_{*}=s_{0}$.  It follows from 
\(\eqref{eq:4.0}_2\) that
     \begin{equation*}
	\begin{aligned}
	|\partial_{X}^{2} U\circ \Phi^{X_{0}}(s)| \leq    h^{-30}\big(M^{\frac{1}{5}-\delta}+2\varepsilon^{\frac{1}{2}} \big)
	 \leq   \frac{3}{4} M^{\frac{1}{5}}.
	\end{aligned}  
	\end{equation*}
(II) $s_{*}>s_{0}$  and  $|X_{0}|=h$. By \(\eqref{eq:5.2}_1\), one has 
      \begin{equation*}
	\begin{aligned}
	|\partial_{X}^{2} U\circ \Phi^{X_{0}}(s)| \leq    h^{-30} \bigg(\frac{1}{2}\varepsilon^{\frac{1}{3}} h^{2}+C\langle X\rangle^{-\frac{5}{3}}+2\varepsilon^{\frac{1}{2}}\bigg) \leq   \frac{3}{4} M^{\frac{1}{5}}.
	\end{aligned}
	\end{equation*}

\underline{\emph{Case 2: $\frac{1}{2}e^{\frac{3s}{2}}\leq |X|<\infty$.}} One uses \(\eqref{eq:5.1}_3\) to bound the damping below by
	\begin{equation*}
	\begin{aligned}
	\mathcal{D} \geq   \frac{5}{2}-3\times\frac{101}{100}\times 2 e^{-s}
	\geq   2.
	\end{aligned}
	\end{equation*}
Hence, we have
      \begin{equation*}
	\begin{aligned}
	|\partial_{X}^{2} U\circ \Phi^{X_{0}}(s)| & \leq  |\partial_{X}^{2} U\circ \Phi^{X_{0}}(s_*)|e^{-2(s-s_*)} +\int_{s_{*}}^{s} e^{-\frac{s^\prime}{2}} e^{-2(s-s^\prime)}\, d s^{\prime} \\
	& \leq   M^{\frac{1}{5}-\delta}+2\varepsilon^{\frac{1}{2}}
	 \leq   \frac{3}{4} M^{\frac{1}{5}}.
	\end{aligned} 
	\end{equation*}

\section{Proof of Theorem \ref{thm2.1}}\label{Proof of main result}

Based on the preliminaries in the previous sections, we can achieve  the proof of Theorem \ref{thm2.1}.  
The proof in this paragraph is close to \cite{MR4321245}, we include it here for sake of completeness.
	
\subsection{Precise blowup information of the solution}~\label{sec5.2}

\underline{\emph{1. The regularity of $u$ before $T_*$.}} Notice that the assumptions in 
\eqref{eq:3.7}-\eqref{eq:4.3} on the initial data in self-similar variables entail $U(\cdot, s)\in H^5(\mathbb{R})$ for all $s\geq s_0$. This fact together with the following scaling relation: 
\begin{equation*}
\begin{aligned}
\left\|\partial_{x}^{n} u\right\|_{L^{2}}=e^{\left(-\frac{5}{4}+\frac{3}{2} n\right) s}\left\|\partial_{X}^{n} U\right\|_{L^{2}},\quad n=1,2,...,5,
\end{aligned}
\end{equation*}
yields $u\in C([-\varepsilon, T_*), H^5(\mathbb{R}))$.

\underline{\emph{2. Blowup time and location.}} The blowup  time and location have been 
obtained in \eqref{eq:10.2} and \eqref{eq:10.6}, respectively.

\underline{\emph{3. $L^\infty$ bound of $u$.}} It follows from \eqref{eq:9.3} that
\begin{equation*}
\begin{aligned}
\|u(\cdot,t)\|_{L^\infty}=\|e^{-\frac{s}{2}}U+\kappa\|_{L^{\infty}} \leq  M \quad \text{for}\ t\in 
[-\varepsilon, T_*].
\end{aligned}
\end{equation*}

\underline{\emph{4. Blowup rate of $\partial_x u$.}} We first claim that
\begin{equation}\label{eq:15.1}
\begin{aligned}
\frac{1}{2}(T_{*}-t)\leq \tau(t)-t\leq 2(T_{*}-t).
\end{aligned}
\end{equation}
The first and second inequality is equivalent to  
\begin{equation}\label{eq:15.2}
\begin{aligned}
T_{*}\leq 2\tau(t)-t
\end{aligned}
\end{equation}
and
\begin{equation}\label{eq:15.3}
\begin{aligned}
\tau(t)+t\leq 2T_*
\end{aligned}
\end{equation}
respectively. Both of \eqref{eq:15.2} and \eqref{eq:15.3} can be verified by the fact that $2\tau(t)-t$ and $\tau(t)+t$ is monotone decreasing and increasing due to \eqref{eq:5.6}, respectively, along with $\tau(T_*)=T_*$.

Straightforward calculations yield 
\begin{equation}\label{eq:15.5}
\begin{aligned}
\partial_{x} u(\xi(x),t)=\frac{1}{\tau(t)-t} \partial_{X}U(0, s)=-\frac{1}{\tau(t)-t},
\end{aligned}
\end{equation}
where \eqref{eq:3.3} has been used. 

It follows from \eqref{eq:15.1} and \eqref{eq:15.5} that 
\begin{equation*}
\begin{aligned}
\frac{1}{2(T_*-t)}\leq |\partial_{x} u(\xi(x), t)|\leq \frac{2}{T_*-t}.
\end{aligned}
\end{equation*}
This means that 
\begin{equation*}
\begin{aligned}
\lim_{t \rightarrow T_*}\partial_{x}u(\xi(x),t)=-\infty.
\end{aligned}
\end{equation*}
So $\partial_{x}u$ blows up at $x_*=\xi(T_*)$.

On the other hand, noticing 
\begin{equation}\label{eq:15.6}
\begin{aligned}
\partial_{x} u(x,t)=\frac{1}{\tau(t)-t} \partial_{X}U(X,s),
\end{aligned}
\end{equation}
one deduces by \eqref{eq:5.8} and \eqref{eq:15.1} that
\begin{equation*}
\begin{aligned}
\frac{1}{3(T_*-t)}\leq \|\partial_{x} u(\cdot,t)\|_{L^\infty}\leq \frac{3}{T_*-t}.
\end{aligned}
\end{equation*}

\underline{\emph{5. The regularity of $u$ at $T_*$.}} 
We first claim that if $|x-x_{*}|>\frac{1}{2}$, then 
\begin{equation}\label{eq:15.7}
\begin{aligned}
|\partial_{x}u(x,T_*)|\leq 2.
\end{aligned}
\end{equation}
To verify \eqref{eq:15.7}, one first sees that there exists  $t_{1} \in\left[-\epsilon, T_{*}\right)$  such that 
\begin{equation*}
\begin{aligned}
|x-\xi(t)| \geq   \frac{1}{2} \quad \text{for}\ t\in [t_1, T_*],
\end{aligned}
\end{equation*}
which implies in terms of the self-similar variables that
\begin{equation*}
\begin{aligned}
|X| \geq \frac{1}{2(\tau(t)-t)^{\frac{3}{2}}}= \frac{1}{2} e^{\frac{3}{2} s} \quad \text{for}\ t\in [t_1, T_*). 
\end{aligned}
\end{equation*}
This together with \(\eqref{eq:5.1}_3\) implies
\begin{equation*}
\begin{aligned}
|\partial_{x}u(x,t)|=e^{s}|\partial_{X} U(X, s)|\leq 2 \quad \text{for}\ t\in [t_1, T_*). 
\end{aligned}
\end{equation*}
Then one can send $t$ to $T_*$ in the above inequality to obtain \eqref{eq:15.7}.

We next claim that if $0<|x-x_{*}|\leq \frac{1}{2}$, then 
\begin{equation}\label{eq:15.8}
\begin{aligned}
|\partial_{x}u(x,T_*)|\sim |x-x_{*}|^{-\frac{2}{3}}.
\end{aligned}
\end{equation}
Indeed, in this case, there exists $t_{2}$ close to $T_{*}$ in $[-\epsilon, T_{*})$ such that
\begin{equation*}
\begin{aligned}
\frac{1}{2}|x-x_{*}| \leq|x-\xi(t)| \leq |x-x_{*}|\quad \text{for}\ t\in [t_2, T_*],
\end{aligned}
\end{equation*}
which implies 
\begin{equation}\label{eq:15.9}
\begin{aligned}
\frac{1}{2}|x-x_{*}| e^{\frac{3}{2} s} \leq |X| \leq |x-x_{*}|e^{\frac{3}{2}s} \leq \frac{1}{2} e^{\frac{3}{2}s} \quad \text{for}\ t\in [t_1, T_*). 
\end{aligned}
\end{equation}
By choosing a larger  $t_{2}$  if necessary, we may also assume  $h\leq|X|$. It follows from \(\eqref{eq:5.1}_2\), \eqref{eq:15.6} and \eqref{eq:15.9} that for all $t\in [t_1, T_*)$
\begin{equation*}
\begin{aligned}
|\partial_{x} u(x, t)|\lesssim \frac{1}{\tau(t)-t}|X|^{-\frac{2}{3}} \lesssim |x-x_{*}|^{-\frac{2}{3}},
\end{aligned}
\end{equation*}
and
\begin{equation*}
\begin{aligned}
|\partial_{x} u(x, t)|\gtrsim \frac{1}{\tau(t)-t}|X|^{-\frac{2}{3}} \gtrsim |x-x_{*}|^{-\frac{2}{3}}.
\end{aligned}
\end{equation*}
Then \eqref{eq:15.8} follows by sending $t$ to $T_*$ in the above two inequalities.

We conclude from \eqref{eq:15.7} and \eqref{eq:15.8} that $ u(\cdot, T_{*})\in C^{\frac{1}{3}}(\mathbb{R})$ and $u$ has a cusp singularity at $(x_{*}, T_{*})$.

\subsection{Asymptotic Convergence to Stationary Solution}~\label{sec5.2}

{\underline{\emph{Step 1.}}}
It is easy to verify by \eqref{eq:11.8} that the limit $\nu=\lim _{s \rightarrow \infty} \partial_{X}^{3} U(0,s)$ exists.
Define
     \begin{equation*}
	\begin{aligned} 
     \widetilde{U}_{\nu}:=U-\overline{U}_{\nu}
     \end{aligned} 
	\end{equation*}
To finish \eqref{eq:4.5}, it suffices to show 
\begin{equation}\label{eq:16.1}
\begin{aligned}
\limsup _{s \rightarrow \infty}|\widetilde{U}_{\nu}(X,s)|=0 \quad \text{for}\ X\in \mathbb{R}.
\end{aligned}
\end{equation}
Recalling \eqref{eq:3.7} and \eqref{eq:4.6}, \eqref{eq:16.1} is obvious when $X=0$. Thus it remains to show \eqref{eq:16.1} for $X\neq 0$.

We begin by proving \eqref{eq:16.1} in the near field, namely $|X|\in (0, |X_0|)$ for some small $|X_0|$. 
Recalling the constrains \eqref{eq:3.7}, one can Taylor expand $\widetilde{U}_{\nu}$ as follows:
\begin{equation}\label{eq:16.2}
\begin{aligned}
\widetilde{U}_{\nu}(X, s)
=\frac{X^{3}}{6} \partial_{X}^{3} \widetilde{U}_{\nu}(0,s)+\frac{X^{4}}{24} \partial_{X}^{4} \widetilde{U}_{\nu}(Y,s)
\end{aligned}
\end{equation} 
for some $Y$ between $0$ and $X$ when $X$ is close to $0$. Next, we handle the RHS of \eqref{eq:16.2} term by term. Observe that 
\begin{equation*}
\begin{aligned}
\lim _{s \rightarrow \infty}\partial_{X}^{3} \widetilde{U}_{\nu}(0,s)=\lim _{s \rightarrow \infty}\partial_{X}^{3}U(0,s)-\nu=0.
\end{aligned}
\end{equation*} 
Hence, first fix a small $|X_0|>0$, and then fix a small $\delta\in (0,  M|X_0|)$, there exists a large enough $s_0=s_0(X_0, \delta)$ such that
\begin{equation}\label{eq:16.3}
\begin{aligned}
|\partial_{X}^{3} \widetilde{U}_{\nu}(X_0,s)|\leq \delta \quad \text{for}\ s\geq s_0. 
\end{aligned}
\end{equation} 
Notice that
\begin{equation*}
\begin{aligned}
\partial_{X}^{4}\overline{U}_{\nu}(X)=\left(\frac{\nu}{6}\right)^{\frac{3}{2}}\partial_{X}^{4}\overline{U}\left(\left(\frac{\nu}{6}\right)^{\frac{1}{2}}X\right). 
\end{aligned}
\end{equation*} 
Then one may estimate 
\begin{equation}\label{eq:16.4}
\begin{aligned}
\|\partial_{X}^{4} \widetilde{U}_{\nu}\|_{L^\infty}&\leq \|\partial_{X}^{4}U\|_{L^\infty}+
\|\partial_{X}^{4}\overline{U}_{\nu}\|_{L^\infty}\\
&\leq M+100\left(\frac{\nu}{6}\right)^{\frac{3}{2}}\leq 2M.
\end{aligned}
\end{equation} 
It follows from \eqref{eq:16.2}-\eqref{eq:16.4} that
\begin{equation}\label{eq:16.5}
\begin{aligned}
|\widetilde{U}_{\nu}(X_0, s)|
\leq \frac{\delta|X_0|^{3}}{6}+\frac{M|X_0|^{4}}{12}\quad \text{for}\ s\geq s_0. 
\end{aligned}
\end{equation}

{\underline{\emph{Step 2.}}}
Basic calculations show that $\widetilde{U}_{\nu}$ evolves in time as follows:
\begin{equation}\label{eq:16.6}
\begin{aligned}
\left(\partial_{s}-\frac{1}{2}+\partial_{X}\overline{U}_{\nu}\right) \widetilde{U}_{\nu}+\bigg(\underbrace{\frac{3}{2}X+U}_{P}\bigg) \partial_{X} \widetilde{U}_{\nu}=\beta_{\tau} F_{\widetilde{U}_{\nu}}
\end{aligned}
\end{equation}
with
\begin{equation*}
\begin{aligned}
F_{\widetilde{U}_{\nu}}=e^{-s}\big(\mathcal{H}(U)+\mathcal{L}(U)\big)-e^{-\frac{s}{2}} \dot{\kappa}-e^{\frac{s}{2}}(\kappa-\dot{\xi})\partial_{X} U-\dot{\tau} U \partial_{X} U. 
\end{aligned}
\end{equation*}

The forcing term is bounded by
\begin{equation}\label{eq:16.7}
\begin{aligned}
\beta_{\tau} |F_{\widetilde{U}_{\nu}}|\lesssim C(M)e^{-s}e^{\frac{s}{2}}+e^{-\frac{s}{3}}+e^{-\frac{3s}{4}}+Me^{-\frac{3s}{4}}e^{\frac{s}{2}}\leq e^{-\frac{s}{5}},
\end{aligned}
\end{equation}
where \eqref{U-bound} has been used. 

To estimate $\widetilde{U}_{\nu}$, we need to work with a new Lagrangian trajectory associated with $P$. To this end, we consider the following Lagrangian trajectory: 
\begin{equation}\label{eq:16.8}
\begin{aligned}
\frac{d}{d s} \Psi^{X_{0}}(s)=\frac{3}{2} \Psi^{X_{0}}(s)+P\circ \Psi^{X_{0}}(s),\quad
\Psi^{X_{0}}(s_0)=X_0.
\end{aligned}
\end{equation}
By the mean value theorem and \eqref{eq:5.8}, one obtains 
\begin{equation}\label{eq:16.9}
\begin{aligned}
|U(X,s)|\leq |U(0,s)|+\|\partial_{X}U\|_{L^\infty}|X|\leq \frac{101}{100}|X|. 
\end{aligned}
\end{equation}
It follows from \eqref{eq:16.8} and \eqref{eq:16.9} that
\begin{equation*}
\begin{aligned}
\frac{d}{d s} |\Psi^{X_{0}}(s)|^2
\geq 3|\Psi^{X_{0}}(s)|^2-2\times\frac{101}{100}|\Psi^{X_{0}}(s)|^2
\geq \frac{4}{5}|\Psi^{X_{0}}(s)|^2,
\end{aligned}
\end{equation*}
which together with $\Psi^{X_{0}}(s_0)=X_0$ gives 
\begin{equation}\label{eq:17}
\begin{aligned}
|\Psi^{X_{0}}(s)|
\geq |X_0|e^{\frac{2}{5}(s-s_{0})}.
\end{aligned}
\end{equation}

Set  $G(X, s)=e^{-\frac{3}{2}(s-s_{0})} \widetilde{U}_{\nu}(X, s)$.  By  
\eqref{eq:16.6}, it is direct to check that 
\begin{equation}\label{eq:17.1}
\begin{aligned}
\left(\frac{d}{d s}+1+\partial_{X}\overline{U}_{\nu}\right) G\circ \Psi^{X_{0}}(s)
=\beta_{\tau} e^{-\frac{3}{2}(s-s_{0})} F_{\widetilde{U}_{\nu}}\circ \Psi^{X_{0}}(s)
\end{aligned}
\end{equation}
with the damping bounded below by $0$ due to \eqref{extremum} that  
\begin{equation*}
\begin{aligned}
1+\partial_{X}\overline{U}_{\nu}\geq 1-\|\partial_{X}\overline{U}_{\nu}\|_{L^\infty}
=1-\|\partial_{X}\overline{U}\|_{L^\infty}=0.
\end{aligned}
\end{equation*}

Plugging \eqref{eq:16.5} and \eqref{eq:16.7} into \eqref{eq:17.1} and applying Gronwall's inequality yields 
\begin{equation*}
\begin{aligned}
|G\circ \Psi^{X_{0}}(s)| & \leq |G\circ \Psi^{X_{0}}(s_0)|+\beta_{\tau} \int_{s_{0}}^{s} |F_{\widetilde{U}_{\nu}}\circ \Psi^{X_{0}}(s_0)|e^{-\frac{3}{2}(s^{\prime}-s_{0})}\, d s^{\prime} \\
& \leq \frac{\delta|X_0|^{3}}{6}+\frac{M|X_0|^{4}}{12}+\int_{s_{0}}^{s} e^{-\frac{s^\prime}{5}}e^{-\frac{3}{2}(s^{\prime}-s_{0})}\, d s^{\prime} \\
& \leq \frac{\delta|X_0|^{3}}{6}+\frac{M|X_0|^{4}}{12}+\frac{M|X_0|^{4}}{12}
 \leq M|X_0|^{4},
\end{aligned}
\end{equation*}
that is
\begin{equation}\label{eq:17.2}
\begin{aligned}
|\widetilde{U}_{\nu}\circ \Psi^{X_{0}}(s)| 
 \leq M|X_0|^{4}e^{-\frac{3}{2}(s-s_{0})}.
\end{aligned}
\end{equation}

Denote $s_*=s_{0}+\frac{13}{5} \log |X_0|^{-1}$. 
Thus for $s \in (s_0, s_{*})$, it follows from \eqref{eq:17.2} that
\begin{equation}\label{eq:17.3}
\begin{aligned}
|\widetilde{U}_{\nu}\circ \Psi^{X_{0}}(s)| 
 \leq M|X_0|^{4-\frac{3}{2}\frac{13}{5}}\leq M|X_0|^{\frac{1}{10}}. 
\end{aligned}
\end{equation}
For all  $X$  between  $X_0$  and  $\Psi^{X_{0}}(s_{*})$, there exists  $s \in (s_0, s_{*})$  such that  $X=\Psi^{X_{0}}(s)$. Therefore, for such  $(X, s)$,  \eqref{eq:17.3} gives
\begin{equation}\label{eq:17.5}
\begin{aligned}
|\widetilde{U}_{\nu}(X,s)|\leq M|X_0|^{\frac{1}{10}}. 
\end{aligned}
\end{equation}
By \eqref{eq:17}, one infers that \eqref{eq:17.5} will cover at least all  $X$ satisfying
\begin{equation}\label{eq:17.6}
\begin{aligned}
|X_0|\leq |X|\leq |X_0|e^{\frac{2}{5}\cdot \frac{13}{5} \log |X_0|^{-1}}=|X_0|^{-\frac{1}{25}}.
\end{aligned}
\end{equation}

Hence, combining \eqref{eq:17.5} and \eqref{eq:17.6} and taking the limit that  $s_{0} \rightarrow \infty$, one obtains 
\begin{equation}\label{eq:17.8}
\begin{aligned}
\limsup _{s \rightarrow \infty}|\widetilde{U}_{\nu}(y, s)|\leq M|X_0|^{\frac{1}{10}}.
\end{aligned}
\end{equation}
Finally, sending  $X_0 \rightarrow 0$ in \eqref{eq:17.8}  proves  for all  $X \neq 0$ that 
\begin{equation}
\begin{aligned}
\limsup _{s \rightarrow \infty}|\widetilde{U}_{\nu}(y, s)|=0.
\end{aligned}
\end{equation}
This completes the proof of \eqref{eq:16.1}.

     \appendix 
	\section{}\label{APP}
		
	Let $\Phi^{X_0}(s)$ be the Lagrangian trajectory defined by
	\begin{equation}\label{eq:20}
     \begin{aligned}
	\frac{d}{d s}\Phi^{X_{0}}(s)=V\circ \Phi^{X_{0}}(s),\quad
\Phi^{X_{0}}(s_0)=X_0,
     \end{aligned}
	\end{equation}
where $V$ is the transport speed given by in \eqref{def:1}.

We first present an upper bound for the Lagrangian trajectory $\Phi^{X_{0}}(s)$, which shows that $\Phi^{X_{0}}(s)$ grows at most exponentially with time, but is bounded for any fixed $X_0\in \mathbb{R}$ when $s<\infty$. This means that $\Phi^{X_{0}}(s)$ is well-defined. 

	\begin{lemma}~\label{lem3.2}
		For all  $X_{0} \in \mathbb{R}$, we have
		\begin{equation}\label{eq:20.1}
           \begin{aligned}
		|\Phi^{X_{0}}(s)| \leq  \big(|X_{0}|+6M\varepsilon^{-\frac{1}{2}}\big) e^{\frac{3}{2}(s-s_{0})}.
           \end{aligned}
		\end{equation}
	\end{lemma}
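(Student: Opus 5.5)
The plan is to bound the transport speed $V$ linearly in $|X|$ plus a constant, and then integrate the resulting differential inequality along the trajectory.

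\textbf{Step 1: bound on $V$.} Recall that
\[
V=\tfrac32 X+\beta_\tau\bigl(U+e^{\frac s2}(\kappa-\dot\xi)\bigr).
\]
I estimate the non-$X$ part using three earlier bounds:
\begin{itemize}
\item \eqref{eq:6.1} gives $\beta_\tau\le \tfrac{101}{100}$;
\item \eqref{U-bound} gives $\|U\|_{L^\infty}\le 2Me^{\frac s2}$;
\item \eqref{eq:10.7} gives $e^{\frac s2}|\kappa-\dot\xi|\le e^{-\frac{3s}{4}}\le 1$.
\end{itemize}
Together these yield
\[
|V(X,s)|\le \tfrac32|X|+\tfrac{101}{100}\bigl(2Me^{\frac s2}+1\bigr)\le \tfrac32|X|+3Me^{\frac s2},
\]
using $M\gg1$.

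\textbf{Step 2: differential inequality.} Along the trajectory \eqref{eq:20}, whenever $\Phi^{X_0}(s)\neq 0$ we have
\[
\frac{d}{ds}|\Phi^{X_0}(s)|\le \tfrac32|\Phi^{X_0}(s)|+3Me^{\frac s2}.
\]
To avoid the non-differentiability at the origin, I work with $\sqrt{|\Phi|^2+\eta}$ and let $\eta\to0$, or simply apply the integral form of the inequality.

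\textbf{Step 3: Gronwall.} Multiplying by $e^{-\frac32(s-s_0)}$ and integrating from $s_0$ gives
\[
e^{-\frac32(s-s_0)}|\Phi^{X_0}(s)|\le |X_0|+3M\int_{s_0}^s e^{\frac{s'}{2}}e^{-\frac32(s'-s_0)}\,ds'.
\]
The integrand equals $e^{\frac{s_0}{2}}e^{-(s'-s_0)}$, so the integral is at most $e^{\frac{s_0}{2}}=\varepsilon^{-\frac12}$, since $s_0=-\log\varepsilon$. Therefore
\[
|\Phi^{X_0}(s)|\le \bigl(|X_0|+3M\varepsilon^{-\frac12}\bigr)e^{\frac32(s-s_0)},
\]
which is stronger than \eqref{eq:20.1}, whose constant $6M$ leaves room for cruder estimates.

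\textbf{Expected obstacle: the modulation terms.} The only delicate point is that $e^{\frac s2}(\kappa-\dot\xi)$ and $U$ grow like $e^{\frac s2}$. Because this growth rate is slower than the $e^{\frac32 s}$ rate of the linear part, the resulting integral converges as computed in Step 3.

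\textbf{Logical order.} The bounds used in Step 1 are themselves derived under the bootstrap assumptions and along trajectories. So this lemma should be read as holding within the bootstrap time interval, where \eqref{eq:5.4}, \eqref{eq:5.6} and \eqref{eq:5.7} hold. Alternatively, I can use \eqref{eq:5.4}$_2$ together with $|\kappa|\le M$ directly, so that $|U|\le \|U+e^{\frac s2}\kappa\|_{L^\infty}+e^{\frac s2}|\kappa|\le 2Me^{\frac s2}$. This also ensures local existence of the ODE solution, and the linear-growth bound then prevents finite-time escape.
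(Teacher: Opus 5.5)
Your proof is correct and follows essentially the paper's argument. The paper integrates the exact identity $\frac{d}{ds}\bigl(e^{-\frac{3s}{2}}\Phi^{X_0}(s)\bigr)=\beta_\tau e^{-\frac{3s}{2}}\bigl(U\circ\Phi^{X_0}(s)+e^{\frac s2}(\kappa-\dot\xi)\bigr)$ and bounds the right-hand side with \eqref{U-bound} and \eqref{eq:10.7}, which is the same integrating-factor computation as yours; taking absolute values only after integrating makes your differential inequality for $|\Phi^{X_0}|$ and the regularization at the origin unnecessary, and your constant $3M$ is indeed sharper than the stated $6M$.
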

	\begin{proof}[\rm \textbf{Proof}]
		It follows from \eqref{eq:20} that
		\begin{equation*}
	\begin{aligned}
		\frac{d}{d s} \left(e^{-\frac{3s}{2}}\Phi^{X_{0}}(s) \right)=\beta_{\tau} e^{-\frac{3s}{2}} \big(U\circ \Phi^{X_{0}}(s)+e^{\frac{s}{2}}(\kappa-\dot{\xi}) \big),
		\end{aligned}
	\end{equation*}
which together with \eqref{U-bound} and \eqref{eq:10.7} gives
		\begin{equation*}
		\begin{aligned}
		|\Phi^{X_{0}}(s) | & \leq  |X_{0}| e^{\frac{3}{2} (s-s_{0})}+e^{\frac{3s}{2}} \int_{s_{0}}^{s} \beta_{\tau}\big(U\circ \Phi^{X_{0}}(s^{\prime})+e^{\frac{s^{\prime}}{2}}(\kappa-\dot{\xi})\big)e^{-\frac{3s^{\prime}}{2}}\, d s^{\prime} \\
		& \leq  |X_{0} | e^{\frac{3}{2} (s-s_{0})}+ e^{\frac{3s}{2}} \int_{s_{0}}^{s} \frac{101}{100}\big(2Me^{\frac{s^{\prime}}{2}}+e^{-s^{\prime}}\big)e^{-\frac{3s^{\prime}}{2}}\, d s^{\prime}\\
&\leq \big(|X_{0}|+6M\varepsilon^{-\frac{1}{2}}\big) e^{\frac{3}{2}(s-s_{0})}.
		\end{aligned}
		\end{equation*}	
This completes the proof of \eqref{eq:20.1}.

	\end{proof}

	Next, we present a lower bound for $\Phi^{X_{0}}(s)$, which tells that $\Phi^{X_{0}}(s)$ will escape to infinity exponentially fast, once the particle $X_{0}$ is at least away from $0$ by a distance $h$. This lower bound will play a role in converting spatial decay into temporal decay.
	\begin{lemma}~\label{lem3.3}(\cite{MR4321245})
		For  $h \leq  |X_{0}|<\infty$, we have
		\begin{equation}\label{eq:20.2}
           \begin{aligned}
		|\Phi^{X_{0}}(s) |\geq  |X_{0}| e^{\frac{1}{5} (s-s_{0})}.
           \end{aligned}
		\end{equation}
	\end{lemma}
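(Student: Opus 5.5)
We prove the lower bound by a differential inequality for $|\Phi^{X_0}(s)|^2$ along \eqref{eq:20}, combined with a barrier argument: once a trajectory starts with $|X_0|\ge h$, it stays outside $\{|X|<h\}$ and grows at rate at least $\frac15$. Differentiating, we get
\begin{equation*}
\frac{1}{2}\frac{d}{ds}|\Phi|^2=\Phi\,V\circ\Phi=\frac32|\Phi|^2+\beta_\tau\Phi\,U\circ\Phi+\beta_\tau\Phi\,e^{\frac s2}(\kappa-\dot\xi).
\end{equation*}
The goal is to show that the last two terms cost at most $\frac{13}{10}|\Phi|^2$ whenever $|\Phi|\ge h$. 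That leaves $\frac{d}{ds}|\Phi|^2\ge \frac25|\Phi|^2$, which integrates to \eqref{eq:20.2}.

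\emph{Step 1 (the $U$ term).} By \eqref{eq:3.3}, $U(0,s)=0$, so the mean value theorem and \eqref{eq:5.8} give $|U(X,s)|\le\frac{101}{100}|X|$, as in \eqref{eq:16.9}. Combined with \eqref{eq:6.1}, this yields
\begin{equation*}
\beta_\tau|\Phi\,U\circ\Phi|\le\Big(\frac{101}{100}\Big)^2|\Phi|^2\le 1.03\,|\Phi|^2.
\end{equation*}
The margin from $\frac32$ down to $\frac{1}{5}+\ldots$ is therefore not tight, so the crude global Lipschitz bound suffices and no sharper near-field information on $U$ is needed.

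\emph{Step 2 (the modulation term).} By \eqref{eq:10.7}, $e^{\frac s2}|\kappa-\dot\xi|\le e^{-\frac{3s}{4}}\le\varepsilon^{\frac34}$. As long as $|\Phi|\ge h=(\log M)^{-1}$, this gives
\begin{equation*}
\beta_\tau|\Phi|\,e^{\frac s2}|\kappa-\dot\xi|\le 2\varepsilon^{\frac34}h^{-1}|\Phi|^2\le 0.01\,|\Phi|^2,
\end{equation*}
for $\varepsilon=\varepsilon(M)$ small. Collecting Steps 1 and 2,
\begin{equation*}
\frac{d}{ds}|\Phi|^2\ge\big(3-2(1.03+0.01)\big)|\Phi|^2\ge\frac25|\Phi|^2\quad\text{whenever }|\Phi(s)|\ge h.
\end{equation*}

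\emph{Step 3 (continuity/barrier argument).} This is the only subtle point, because the estimate of Step 2 requires $|\Phi|\ge h$, which is exactly what we are trying to preserve. Let $s_1=\sup\{s'\ge s_0: |\Phi(\sigma)|\ge h \text{ on } [s_0,s']\}$. On $[s_0,s_1)$ the inequality of Step 2 holds, so $|\Phi|^2$ is nondecreasing and $|\Phi(s)|\ge|X_0|e^{\frac15(s-s_0)}$ there. If $s_1$ were finite, continuity would force $|\Phi(s_1)|=h$. But on $[s_0,s_1)$ the quantity $|\Phi|$ grows from a value $\ge h$, so $|\Phi(s_1)|\ge|X_0|e^{\frac15(s_1-s_0)}$. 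This is strictly greater than $h$ if $s_1>s_0$. In the borderline case $|X_0|=h$, the strict positivity of the derivative at $|\Phi|=h$ pushes the trajectory outward immediately. Either way we reach a contradiction, so $s_1=\infty$. Global existence of the trajectory is guaranteed by Lemma \ref{lem3.2}. Integrating the differential inequality on $[s_0,s]$ then yields \eqref{eq:20.2}.
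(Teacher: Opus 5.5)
Your proof is correct and follows essentially the same route as the paper. Both derive $\frac{d}{ds}|\Phi^{X_0}|^2\ge\frac25|\Phi^{X_0}|^2$ from the Lipschitz bound $|U(X,s)|\le\frac{101}{100}|X|$ and the smallness of $e^{\frac s2}|\kappa-\dot\xi|$, then close with a continuity argument. The differences are cosmetic. The paper bootstraps $|\Phi^{X_0}(s)|\ge|X_0|$ and absorbs the modulation term via $e^{-s}\le\varepsilon\le\frac{1}{100}h$, whereas you bootstrap $|\Phi^{X_0}(s)|\ge h$ and use $\varepsilon^{\frac34}h^{-1}\ll 1$. Your explicit use of the sign of $\Phi\,V\circ\Phi$ and your treatment of the borderline case $|X_0|=h$ are, if anything, more careful than the paper's.
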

	\begin{proof} 
We a priori assume
\begin{equation}\label{eq:20.3}
\begin{aligned}
|\Phi^{X_{0}}(s)|\geq |X_{0}|.
\end{aligned}
\end{equation}
By the assumption of this lemma and the definition of $h$, it holds that
\begin{equation*}
\begin{aligned}
|X_{0}|\geq h \geq 100\varepsilon\geq 100 e^{-s}. 
\end{aligned}
\end{equation*}
Hence, one obtains
\begin{equation}\label{eq:20.5}
\begin{aligned}
e^{-s}\leq \frac{1}{100}|\Phi^{X_{0}}(s)|.
\end{aligned}
\end{equation}

By the mean value theorem and \eqref{eq:5.8}, one obtains
\begin{equation*}
\begin{aligned}
|U(X,s)|\leq |U(0,s)|+\|\partial_{X}U\|_{L^\infty}|X|\leq \frac{101}{100}|X|. 
\end{aligned}
\end{equation*}
This together with \eqref{def:1}, \eqref{eq:10.7} and \eqref{eq:20.5} yields
\begin{equation}\label{eq:20.6}
\begin{aligned}
|V\circ \Phi^{X_{0}}(s)|&\geq \frac{3}{2}|\Phi^{X_{0}}(s)|-\beta_{\tau}\big(|U\circ \Phi^{X_{0}}(s)|+e^{\frac{s}{2}}|\kappa-\dot{\xi}|\big)\\
&\geq \frac{3}{2}|\Phi^{X_{0}}(s)|-\frac{101}{100}\times \frac{101}{100}|\Phi^{X_{0}}(s)|-\frac{101}{100}e^{-s}\\
&\geq \frac{1}{5}|\Phi^{X_{0}}(s)|.
\end{aligned}
\end{equation}
It follows from \eqref{eq:20} and \eqref{eq:20.6}
\begin{equation*}
\begin{aligned}
\frac{d}{d s} |\Psi^{X_{0}}(s)|^2
\geq \frac{2}{5}|\Psi^{X_{0}}(s)|^2.
\end{aligned}
\end{equation*}
Solving this inequality gives \eqref{eq:20.2}, which also closes \eqref{eq:20.3}.

	\end{proof}

\section*{Acknowledgments}
 Saut acknowledges the support of the project ANR ISAAC AAPG2023. Wang acknowledges the support of grant no. 830018 from China.

\section*{Conflict of Interest}
The authors declare that they have no conflict of interest to this work.

\section*{Data Availability Statement}
The authors confirm that the data supporting the findings of this study are available within the article and its supplementary materials.

	
	
\end{document}